\newcommand{\klockan}{\the\hours:{\ifnum\minutes<10 0\fi}\the\minutes}
\newcommand{\tid}{\today\ \klockan}
\newcommand{\prtid}{\smash{\raise 10mm \hbox{\LaTeX ed \tid}}}
\renewcommand{\prtid}{}
\makeatletter \pagestyle{headings} \headheight 10pt
\def\sectionmark#1{} 
\def\subsectionmark#1{}
\newcommand{\sectnr}{\ifnum \c@secnumdepth >\z@
                 \thesection.\hskip 1em\relax \fi}
\def\@evenhead{\footnotesize\rm\thepage\hfil\leftmark\hfil\llap{\prtid}}
\def\@oddhead{\footnotesize\rm\rlap{\prtid}\hfil\rightmark\hfil\thepage}
\def\tableofcontents{\section*{Contents} 
 \@starttoc{toc}}
\def\@biblabel#1{#1.}
\let\Thebibliography=\thebibliography
\renewcommand{\thebibliography}[1]{\def\@mkboth##1##2{}\Thebibliography{#1}
\addcontentsline{toc}{section}{References}
\frenchspacing 
\setlength{\@topsep}{0pt}
\setlength{\itemsep}{0pt}%
\setlength{\parskip}{0pt plus 2pt}%
} \makeatother
\def\mdots@{\mathinner.\nonscript\!.%
 \ifx\next,.\else\ifx\next;.\else\ifx\next..\else
 \nonscript\!\mathinner.\fi\fi\fi}
\let\ldots\mdots@
\let\cdots\mdots@
\let\dotso\mdots@
\let\dotsb\mdots@
\let\dotsm\mdots@
\let\dotsc\mdots@
\def\vdots{\vbox{\baselineskip2.8\p@ \lineskiplimit\z@
    \kern6\p@\hbox{.}\hbox{.}\hbox{.}\kern3\p@}}
\def\ddots{\mathinner{\mkern1mu\raise8.6\p@\vbox{\kern7\p@\hbox{.}}%
    \raise5.8\p@\hbox{.}\raise3\p@\hbox{.}\mkern1mu}}
\let\Enumerate=\enumerate
\renewcommand{\enumerate}{\Enumerate%
\setlength{\itemsep}{0pt}%
\setlength{\parskip}{0pt plus 1pt}%
\renewcommand{\theenumi}{\textup{(\alph{enumi})}}%
\renewcommand{\labelenumi}{\theenumi}%
}
\let\Itemize=\itemize
\renewcommand{\itemize}{\Itemize%
\setlength{\itemsep}{0pt}%
\setlength{\parskip}{0pt plus 1pt}%
}
\def\@seccntformat#1{\csname the#1\endcsname.\quad}
\long\def\@makecaption#1#2{%
  \vskip\abovecaptionskip
  \sbox\@tempboxa{ #1. #2}%
  \ifdim \wd\@tempboxa >\hsize
    #1. #2\par
  \else
    \global \@minipagefalse
    \hb@xt@\hsize{\hfil\box\@tempboxa\hfil}%
  \fi
  \vskip\belowcaptionskip}
\newcommand{\authortitle}[3]{\author{#1}\title{#2}%
   \ifthenelse{\equal{#3}{}}{\markboth{#1}{#2}}{\markboth{#1}{#3}}}
\newcommand{\art}[6]{{\sc #1, \rm #2, \it #3 \bf #4 \rm (#5), \mbox{#6}.}}
\newcommand{\auth}[2]{{#1, #2.}}
\newcommand{\artin}[3]{{\sc #1, \rm #2,  in #3.}}
\newcommand{\book}[3]{{\sc #1, \it #2, \rm #3.}}
\newcommand{\AND}{{\rm and }}
\newtheoremstyle{descriptive}%
  {\topsep}   
  {\topsep}   
  {\rmfamily} 
  {}          
  {\bfseries} 
  {.}         
  { }         
  {}          
\newtheoremstyle{propositional}%
  {\topsep}   
  {\topsep}   
  {\itshape}  
  {}          
  {\bfseries} 
  {.}         
  { }         
  {}          
\theoremstyle{propositional}
\newtheorem{thm}{Theorem}[section]
\newtheorem{prop}[thm]{Proposition}
\newtheorem{lem}[thm]{Lemma}
\newtheorem{cor}[thm]{Corollary}
\theoremstyle{descriptive}
\newtheorem{deff}[thm]{Definition}
\newtheorem{example}[thm]{Example}
\newtheorem{remark}[thm]{Remark}
\renewenvironment{proof}[1][\proofname]{\par
  \pushQED{\qed}%
  \normalfont
  \trivlist
  \item[\hskip\labelsep
        \itshape
    #1\@addpunct{.}]\ignorespaces
}{%
  \popQED\endtrivlist\@endpefalse
} \makeatother
\newcommand{\setm}{\setminus}
\renewcommand{\emptyset}{\varnothing}
\newcommand{\Cp}{{C_p}}
\newcommand{\Cinfty}{{C_\infty}}
\newcommand{\CinftyP}{{C_\infty^{\P}}}
\DeclareMathOperator{\dist}{dist}
\DeclareMathOperator{\osc}{osc}
\DeclareMathOperator{\spt}{supp}
\newcommand{\supp}{\spt}
\DeclareMathOperator{\vertical}{vert}
\newcommand*{\coloneq}{\mathrel{:=}}
\newcommand*\upto{\nearrow}
\newcommand{\loc}{_{\rm loc}}
{\catcode`p =12 \catcode`t =12 \gdef\eeaa#1pt{#1}}      
\def\accentadjtext#1{\setbox0\hbox{$#1$}\kern   
                \expandafter\eeaa\the\fontdimen1\textfont1 \ht0 }
\def\accentadjscript#1{\setbox0\hbox{$#1$}\kern 
                \expandafter\eeaa\the\fontdimen1\scriptfont1 \ht0 }
\def\accentadjscriptscript#1{\setbox0\hbox{$#1$}\kern   
                \expandafter\eeaa\the\fontdimen1\scriptscriptfont1 \ht0 }
\def\accentadjtextback#1{\setbox0\hbox{$#1$}\kern       
                -\expandafter\eeaa\the\fontdimen1\textfont1 \ht0 }
\def\accentadjscriptback#1{\setbox0\hbox{$#1$}\kern     
                -\expandafter\eeaa\the\fontdimen1\scriptfont1 \ht0 }
\def\accentadjscriptscriptback#1{\setbox0\hbox{$#1$}\kern 
                -\expandafter\eeaa\the\fontdimen1\scriptscriptfont1 \ht0 }
\def\itoverline#1{{\mathsurround0pt\mathchoice
        {\rlap{$\accentadjtext{\displaystyle #1}
                \accentadjtext{\vrule height1.593pt}
                \overline{\phantom{\displaystyle #1}
                \accentadjtextback{\displaystyle #1}}$}{#1}}
        {\rlap{$\accentadjtext{\textstyle #1}
                \accentadjtext{\vrule height1.593pt}
                \overline{\phantom{\textstyle #1}
                \accentadjtextback{\textstyle #1}}$}{#1}}
        {\rlap{$\accentadjscript{\scriptstyle #1}
                \accentadjscript{\vrule height1.593pt}
                \overline{\phantom{\scriptstyle #1}
                \accentadjscriptback{\scriptstyle #1}}$}{#1}}
        {\rlap{$\accentadjscriptscript{\scriptscriptstyle #1}
                \accentadjscriptscript{\vrule height1.593pt}
                \overline{\phantom{\scriptscriptstyle #1}
                \accentadjscriptscriptback{\scriptscriptstyle #1}}$}{#1}}}}
\newcommand{\de}{\delta}
\newcommand{\eps}{\varepsilon}
\newcommand{\la}{\lambda}
\newcommand{\ga}{\gamma}
\newcommand{\Om}{\Omega}
\renewcommand{\phi}{\varphi}
\newcommand{\p}{{$p\mspace{1mu}$}}
\newcommand{\R}{\mathbf{R}}
\newcommand{\Z}{\mathbf{Z}}
\newcommand{\Q}{\mathbf{Q}}
\newcommand{\Zp}{\mathbf{Z}^+}
\newcommand{\eR}{{\overline{\R}}}
\newcommand{\Np}{N^{1,p}}
\newcommand{\Ninfty}{N^{1,\infty}}
\newcommand{\NinftyP}{N^{1,\infty}(\P)}
\newcommand{\htilde}{\tilde{h}}
\newcommand{\ut}{\tilde{u}}
\newcommand{\gt}{\tilde{g}}
\newcommand{\mus}{\mu_\sharp}
\newcommand{\Ga}{\Gamma}
\newcommand{\Gav}{\Ga_{\vertical}}
\renewcommand{\P}{\mathcal{P}}
\newcommand{\LL}{\mathcal{L}}
\newcommand{\NX}{{N^1X}}
\newcommand{\NXloc}{{N^1\loc X}}
\newcommand*\NtX{{\widetilde{N}^1X}}
\newcommand{\NXP}{{N^1X(\P)}}
\newcommand{\CX}{{C_X}}
\newcommand{\CXP}{{C_X^{\P}}}
\newcommand{\CXOm}{{C_X^\Om}}
\newcommand*\cconc{c_{\scriptscriptstyle{\!\vartriangle}}}
\newcommand{\fa}{0} 
\newcommand{\tr}{1} 
\newcommand{\cpt}{\text{cpt}}
\newcommand{\proper}{\text{proper}}
\newcommand{\clB}{\itoverline{B}}
\numberwithin{equation}{section}
\newcommand{\imp}{\ensuremath{
\mathchoice{\ \Longrightarrow \ }{\Rightarrow}
                {\Rightarrow}{\Rightarrow}}}
\newcommand{\quadeqv}{\ensuremath{
\mathchoice{\quad \Longleftrightarrow \quad }{\Leftrightarrow}
                {\Leftrightarrow}{\Leftrightarrow}} }
\newcommand{\quadimp}{\ensuremath{
\mathchoice{\quad \Longrightarrow \quad }{\Rightarrow}
                {\Rightarrow}{\Rightarrow}}}
\newcommand{\eqvnospace}{\ensuremath{
\mathchoice{\Longleftrightarrow}{\Leftrightarrow}
                {\Leftrightarrow}{\Leftrightarrow}}}
\newcommand{\negimp}{\ensuremath{\textstyle\kern 0em \not\kern 0em\Rightarrow}}
\newcommand{\Longnegrevimp}{{\textstyle\kern 0.5em \not\kern -0.5em\Longleftarrow}}
\newcommand{\Downnegimp}{{\textstyle\kern 0.28em \not\kern -0.28em\Big\Downarrow}} 
\newcounter{saveenumi}
\newenvironment{ack}{\medskip{\it Acknowledgement.}}{}
\begin{document}

\authortitle{Anders Bj\"orn, Jana Bj\"orn and Luk\'a\v{s} Mal\'y}
{Non-quasicontinuous Newtonian functions 
and outer capacities based on Banach function spaces}
{Non-quasicontinuous Newtonian functions 
and outer capacities}
\author{
Anders Bj\"orn \\
\it\small Department of Mathematics, Link\"oping University, SE-581 83 Link\"oping, Sweden\\
\it \small anders.bjorn@liu.se, ORCID\/\textup{:} 0000-0002-9677-8321
\\
\\
Jana Bj\"orn \\
\it\small Department of Mathematics, Link\"oping University, SE-581 83 Link\"oping, Sweden\\
\it \small jana.bjorn@liu.se, ORCID\/\textup{:} 0000-0002-1238-6751
\\
\\
Luk\'a\v{s} Mal\'y \\
\it\small Department of Science and Technology, Link\"oping University,
\\
\it\small SE-601 74 Norrk\"oping, Sweden\/{\rm ;}
\it \small lukas.maly@liu.se, ORCID\/\textup{:} 0000-0003-2083-9180
}

\date{Preliminary version, \today}
\date{}

\maketitle

\noindent{\small
 {\bf Abstract.} We construct various examples of Sobolev-type functions, 
defined via upper gradients in metric spaces, 
that fail to be quasicontinuous or weakly quasicontinuous. 
This is done with 
quasi-Banach function lattices $X$ as the 
function spaces
defining the smoothness of the Sobolev-type functions.
These results are in contrast to the case $X=L^p$ with $1\le p<\infty$,
where all Sobolev-type functions in $\Np$ are known to be quasicontinuous,
provided that the underlying metric space $\P$ is locally complete.
In most of our examples,
$\P$
is a compact subset of $\R^2$
and $X=L^\infty$.
Four
particular examples are the damped topologist's sine curve,
the von Koch snowflake curve, the Cantor ternary set and the Sierpi\'nski
carpet.
We also discuss several related properties, such as whether the Sobolev
capacity is an outer capacity, and how these properties are related.
A fundamental role in these considerations is played by the lack of
the Vitali--Carath\'eodory property.
}

\medskip

\noindent {\small \emph{Key words and phrases}: 
Banach function lattice, 
Banach function space,
metric space, 
Newtonian space, 
outer capacity,
quasicontinuity, 
Sobolev capacity, 
upper gradient, 
Vitali--Carath\'eodory property, 
weak quasicontinuity.
}

\medskip

\noindent {\small \emph{Mathematics Subject Classification} (2020): 
Primary: 
46E36, 
Secondary: 
30L99, 
31C15, 
31E05, 
46B42. 
}


\section{Introduction}

We assume throughout the paper that $\P = (\P, d, \mu)$
is a metric measure space equipped with a
positive complete Borel measure 
$\mu$ such that $0 < \mu(B) < \infty$ for every ball $B \subset \P$.
Such spaces allow themselves to first order analysis, including 
Sobolev type spaces, geometric analysis and potential theory as in 
Haj\l asz~\cite{Haj-PA}, \cite{Haj03}, 
Heinonen--Koskela~\cite{HeKo98},
Cheeger~\cite{Cheeger},
Shan\-mu\-ga\-lin\-gam~\cite{Sh-rev}, 
Heinonen~\cite{hei-BAMS}
and many subsequent papers,
as well as the books by
Haj\l asz--Koskela~\cite{HaKo},
Heinonen~\cite{heinonen},
Bj\"orn--Bj\"orn~\cite{BBbook}
and Heinonen--Koskela--Shan\-mu\-ga\-lin\-gam--Tyson~\cite{HKSTbook}.

Since the late 1990s it has been an open question whether
all functions in the 
(Sobolev) Newtonian space $\Np(\P)$, $1 \le p <\infty$,
are quasicontinuous, without additional assumptions on $\P$.
It has recently been answered in the affirmative in 
Eriksson-Bique--Poggi-Corradini~\cite[Theorem~1.3]{EB-PC}
when $\P$ is locally complete.

In this paper we shall see
that this is not so in the wider generality
of Newtonian spaces $\NX(\P)$ based on (quasi)-Banach function lattices $X$,
and in particular not for $\Ninfty(\P)$.
A simple example of such a situation is when the metric space $\P\subset[0,1]$ 
is the classical ternary Cantor set, equipped
with the 
$\tfrac{\log 2}{\log 3}$-dimensional
Hausdorff measure.
In this case, the function $\chi_{[0,a]\cap \P} \in\Ninfty(\P)= L^\infty(\P)$
is not quasicontinuous
when $a\in\P$ is not an end point of any of the generating intervals.

The following result gives a more complete picture and
shows the striking difference
between $\Ninfty(\P)$ and $\Np(\P)$ with $1 \le p<\infty$.
See Theorem~\ref{thm-von-Koch} for a more refined version
and Section~\ref{sect-prelim} for the definitions.

\begin{thm} \label{thm-Linfty-intro}
Assume that there are 
$L^\infty$-almost no nonconstant rectifiable
curves in the metric space
$\P$ and that there is a point $x_0 \in \P$
with $\mu(\{x_0\})=0$.
Then $\Ninfty(\P)= L^\infty(\P)$ and
the following are true\/\textup:
\begin{enumerate}
\item
$\chi_{\{x_0\}} \in \Ninfty(\P)$  is weakly quasicontinuous, but not
quasicontinuous.
\item
The capacity $\CinftyP$ is not an outer capacity, not even for sets of zero capacity.
\item
If  there is a closed set $F$ with empty interior and $\mu(F)>0$,
then $\chi_F \in \Ninfty(\P)$  is not
weakly quasicontinuous.
\end{enumerate}

In particular, all of these facts hold if $\P$ is
the von Koch snowflake curve equipped with the 
$\frac{\log4}{\log3}$-dimensional Hausdorff measure $\mu$.
\end{thm}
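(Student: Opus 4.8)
The plan is to reduce the entire theorem, general part and example alike, to one structural computation of $\Ninfty(\P)$ and of $\CinftyP$. First I would note that the hypothesis of $L^\infty$-almost no nonconstant rectifiable curves renders the weak upper gradient inequality vacuous: only nonconstant rectifiable curves could ever violate $|u(\ga(0))-u(\ga(1))|\le\int_\ga g\,ds$, and these form an $L^\infty$-negligible family, so $g\equiv0$ is an $L^\infty$-weak upper gradient of every $u\in L^\infty(\P)$. Hence $\Ninfty(\P)=L^\infty(\P)$, with $\|u\|_{\Ninfty(\P)}=\|u\|_{L^\infty(\P)}$. Feeding this into the definition of the capacity, and using that Newtonian functions are genuine pointwise representatives, I would then establish the dichotomy
\[
\CinftyP(E)=0\ \text{ if }\ \mu^*(E)=0,\qquad \CinftyP(E)=1\ \text{ if }\ \mu^*(E)>0 .
\]
Indeed, when $\mu^*(E)=0$ one tests with the characteristic function of a measurable null set containing $E$, which has zero essential supremum; and when $\mu^*(E)>0$ every admissible $u\ge1$ on $E$ has $\{u\ge1\}$ of positive measure, forcing $\|u\|_{L^\infty(\P)}\ge1$, with equality for $u\equiv1$.

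Parts (a) and (b) then drop out of this dichotomy together with the observation that $\mu(\{x_0\})=0$ prevents $x_0$ from being isolated (an isolated point is a ball, hence of positive measure). For (a), $\CinftyP(\{x_0\})=0$ shows that deleting the zero-capacity set $\{x_0\}$ leaves $\chi_{\{x_0\}}$ equal to the continuous function $0$, so $\chi_{\{x_0\}}$ is weakly quasicontinuous; but every nonempty open $G$ has $\mu(G)>0$, so $\CinftyP(G)=1$, whence any open set of capacity below $1$ is empty, and quasicontinuity would force $\chi_{\{x_0\}}$ to be continuous everywhere — impossible at the non-isolated point $x_0$. The same two facts give (b): every open $G\supset\{x_0\}$ is nonempty, so $\inf\{\CinftyP(G):G\supset\{x_0\}\text{ open}\}=1\ne0=\CinftyP(\{x_0\})$, and outerness fails already at the zero-capacity set $\{x_0\}$.

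For (c) I would argue by contradiction. If $\chi_F$ were weakly quasicontinuous, then for $\eps=\tfrac12$ there is a set $E$ with $\CinftyP(E)<\tfrac12$ and $\chi_F|_{\P\setm E}$ continuous; by the dichotomy $E$ is $\mu$-null, so $\mu(F\setm E)=\mu(F)>0$ and I may pick $y\in F\setm E$, where $\chi_F(y)=1$. Since $F$ has empty interior, $\P\setm F$ is dense, so each ball $B(y,r)$ meets the open set $\P\setm F$ in a set of positive measure, which — $E$ being null — is not swallowed by $E$; this yields points $z\in B(y,r)\setm E$ with $\chi_F(z)=0$. Sending $r\to0$ produces $z\to y$ within $\P\setm E$ with $\chi_F(z)=0\not\to1=\chi_F(y)$, contradicting continuity of $\chi_F|_{\P\setm E}$ at $y$.

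It remains to verify the hypotheses for the von Koch snowflake $\P$ with $\mu=\mathcal H^{s}$, $s=\log4/\log3$. As $s>0$, every singleton is $\mu$-null, supplying $x_0$. A closed nowhere-dense set $F$ with $\mu(F)>0$ I would obtain from the natural parametrization, a homeomorphism of a circle onto $\P$ that pushes Lebesgue measure forward to a constant multiple of $\mathcal H^{s}$: the image of a fat Cantor (Smith--Volterra--Cantor) set is such an $F$. The step I expect to be the genuine obstacle is the complete absence of nonconstant rectifiable curves: a nonconstant curve in $\P$ has connected image containing a nondegenerate subarc, and by self-similarity every such subarc contains a scaled copy of the whole snowflake, hence has Hausdorff dimension $s>1$ and infinite length, so no curve in $\P$ can be rectifiable. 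This makes the family of nonconstant rectifiable curves empty — a fortiori $L^\infty$-negligible — so all hypotheses hold and (a)–(c) apply.
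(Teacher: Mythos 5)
Your proposal is correct and takes essentially the same route as the paper, which proves the theorem by reducing to $\Ninfty(\P)=L^\infty(\P)$ (zero is an $L^\infty$-weak upper gradient of everything) together with the $0/1$ capacity dichotomy, deriving (a) from the fact that quasicontinuous functions must then be continuous, (b) from $\Cinfty(G)=1$ for nonempty open $G$, and (c) by the same density-of-$\P\setminus F$ contradiction (Proposition~\ref{prop-Linfty-gen}, Theorem~\ref{thm-von-Koch} and Example~\ref{ex-von-Koch}), with your snowflake verifications supplying details the paper leaves implicit. One harmless slip: a nondegenerate subarc cannot contain a scaled copy of the \emph{whole} snowflake (a simple closed curve does not embed in an arc); it does contain a scaled copy of a von Koch generator cell of Hausdorff dimension $\log 4/\log 3>1$, which already forces infinite length and completes your argument.
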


To appreciate the statement of Theorem~\ref{thm-Linfty-intro}, 
let us turn back to $\Np(\P)$, with  $1 \le p <\infty$.
Unlike the classical Sobolev functions on $\R^n$,
the functions in $\Np(\P)$ 
are  by definition better than arbitrary a.e.-representatives.
Namely, two $\Np$-functions which are equal a.e.\ are equal 
up to a set of zero \p-capacity (that is, quasieverywhere (q.e.)),
because the other a.e.-representatives
do not possess \p-integrable upper gradients.

For metric spaces $\P$ without (or with $L^p$-almost no) 
rectifiable curves
and $1\le p<\infty$, 
the Newtonian space $\Np(\P)$ coincides with $L^p(\P)$ and 
the \p-capacity coincides with the underlying measure on $\P$.
All $u\in\Np(\P)$ are thus quasicontinuous by virtue 
of Luzin's theorem.
We mention already here that in this context, one
crucial difference between $\Np$ and $\Ninfty$ is that $L^p$
has an absolutely continuous norm and
satisfies the
Vitali--Carath\'eodory property (VC, Definition~\ref{def-VC}), 
while $L^\infty$ usually lacks these properties,
see  Section~\ref{sect-ex-Loo}.

Clearly, without rectifiable curves, $\NX(\P)=X(\P)$
for every quasi-Banach function lattice $X$, but this is also the case
for the well-connected  Sierpi\'nski carpet $S_{1/3}$, 
see Example~\ref{ex-Sierpinski-new}.
As a consequence, Theorem~\ref{thm-Linfty-intro} applies also 
to $\P=S_{1/3}$.  

When $\P$ is complete and
has enough rectifiable curves 
(so that a \p-Poincar\'e inequality holds,  and also assuming that
$\mu$ is doubling), it was shown in
Bj\"orn--Bj\"orn--Shan\-mu\-ga\-lin\-gam~\cite[Theorem~1.1 and p.~1198]{BBS5} 
that all functions in $\Np(\P)$, $1 \le p <\infty$, are quasicontinuous.
(This had been conjectured already in the late 1990s
by Shan\-mu\-ga\-lin\-gam~\cite[Corollary~3.1.7]{Sh-PhD}, \cite[Remark~4.4]{Sh-rev},
see also \cite[p.~1198]{BBS5}.)
This result has later been generalized in various ways,  
see~\cite[Theorem~8.2.1]{HKSTbook}, \cite[p.~1190]{BBLeh1}
and
\cite[Theorem~9.1]{BBsemilocal}.   
The assumption of Poincar\'e inequality was 
(in complete doubling spaces and for $p>1$)
removed by Ambrosio et al.\ in~\cite{AmbCD} and~\cite{AmbGS}
(cf.\ \cite[Remark~8.9]{BBsemilocal}), while
Eriksson-Bique--Poggi-Corradini~\cite[Theorem~1.3]{EB-PC} 
recently 
removed most of the earlier assumptions and only require
that $\P$ is locally complete.
The assumption that $\mu(B)<\infty$ for every ball $B$ is however crucial,
as our Example~\ref{ex-int-not-Loo}(b) shows.

In the world of (quasi)-Banach function lattices, the letter $X$ is usually reserved
for the function lattice itself and corresponds in our setting to the $L^p$ 
or $L^\infty$ spaces, 
which control the Sobolev functions and their gradients. 
For the underlying metric space carrying the
Sobolev functions, we therefore use $\P$ rather than $X$, which is 
otherwise the usual notation in analysis on metric spaces
based on $L^p$ as the function space.

Quasicontinuity is closely related to the outer property
of the Sobolev capacity $\CXP$ associated with $\NXP$. 
We also study this property and  how it 
is related to quasicontinuity, as well as some variants
of these properties.
The following theorem summarizes quite well what can happen in the general case.
Here $\eR:=[-\infty,\infty]$.

\begin{thm} \label{thm-qcont-char} 
Consider the  following statements
for general quasi-Banach function lattices $X$ 
and metric spaces $\P$\/\textup{:}
\begin{enumerate}
\renewcommand{\theenumi}{\textup{(\Alph{enumi})}}%
\item \label{b-qcont}
Every $u \in \NX(\P)$ is quasicontinuous.
\item \label{b-outer}
$\CXP$ is an outer capacity.
\item \label{b-qouter}
$\CXP$ is a quasiouter capacity.
\item \label{b-weak-qcont}
Every  weakly quasicontinuous $u:\P \to \eR$ is quasicontinuous.
\item \label{b-outer-zero}
$\CXP$ is an outer capacity for sets of zero capacity.
\item \label{b-repr}
Every $u \in \NXP$ has a quasicontinuous representative $v$, i.e.\
$v=u$ q.e.
\item \label{b-wqcont}
Every $u \in \NXP$ is weakly quasicontinuous.
\setcounter{saveenumi}{\value{enumi}}
\end{enumerate}
Then 
\begin{equation} \label{eq-and}
    (\ref{b-weak-qcont} \text{ and } \ref{b-wqcont}) 
    \quadeqv
  (\ref{b-outer-zero} \text{ and  } \ref{b-repr})
     \quadeqv \ref{b-qcont}.
\end{equation}
and
\begin{equation} 
\label{eq-xy-figure}
\begin{split} 
\xymatrix{
\ref{b-qcont} \ar@{=>}[r]^{\Longnegrevimp}
\ar@{=>}[rdd]
& \ref{b-outer} \ar@{=>}[r]^{\Longnegrevimp}
\ar@{=>}[d]^{\textstyle\kern -0.65em \not}
& \ref{b-qouter} \ar@{=>}[r]^{\Longnegrevimp}
& \ref{b-weak-qcont} \ar@{=>}[r] 
& \ref{b-outer-zero}
\\
& \ref{b-wqcont} 
\ar@{<=}[d]^{\Downnegimp} \\
& \ref{b-repr}\rlap{\textup{.}} 
\ar@{=>}[uurrr]\ar@{=>}[uurrr]|{\textstyle\kern -0.65em\not}
}
\end{split}
\end{equation}
Moreover, there are examples when all of these properties fail.
\end{thm}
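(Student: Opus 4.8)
The plan is to read \eqref{eq-and} and \eqref{eq-xy-figure} as one web of implications assembled from a handful of atomic steps, and then to certify that no further arrow holds by calibrating separating examples. I would first dispose of the (near-)trivial links. Quasicontinuity is formally stronger than weak quasicontinuity, so (A) gives (G); a quasicontinuous function is its own quasicontinuous representative, so (A) gives (F). If $u$ has a quasicontinuous representative $v=u$ q.e., then off the exceptional open set of $v$ together with the zero-capacity set $\{u\neq v\}$ the function $u$ coincides with the continuous $v$, which yields weak (but not open) quasicontinuity; this is (F)$\Rightarrow$(G). Finally (B)$\Rightarrow$(C) is immediate once quasiouterness is read as the announced weakening of outerness.

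Next I would establish the main chain (A)$\Rightarrow$(B)$\Rightarrow$(C)$\Rightarrow$(D)$\Rightarrow$(E). For (A)$\Rightarrow$(B) I would run the standard capacitary argument: given $E$ and a near-optimal admissible $u$ for $\CXP(E)$ with $u\ge 1$ on $E$, quasicontinuity makes a superlevel set $\{u>1-\delta\}$ quasiopen, hence enclosable in an open set of controlled capacity still containing $E$, which forces outer regularity (invoking the existence and quasiopenness facts from the preliminaries). The step (C)$\Rightarrow$(D) is a covering principle: a weakly quasicontinuous $u$ has, for each $\eps$, a set of capacity $<\eps$ off which it is continuous, and quasiouterness encloses that set, up to capacity zero, in an open set of capacity $<\eps$, promoting the continuity to the open form required for quasicontinuity. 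For (D)$\Rightarrow$(E), given $\CXP(E_0)=0$ I would test (D) on $\chi_{E_0}$: it is weakly quasicontinuous since $\chi_{E_0}=0$ q.e., so by (D) it is quasicontinuous; but $\CXP(E_0)=0$ forces $\mu(E_0)=0$ and hence $E_0$ has empty interior, so continuity of $\chi_{E_0}$ off an open set $U$ is possible only if $E_0\subset U$, which is exactly outer regularity at $E_0$.

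The equivalences in \eqref{eq-and} then follow by bookkeeping: the chain gives (A)$\Rightarrow$(E) and (A)$\Rightarrow$(F), hence (A)$\Rightarrow$((E) and (F)); conversely, given (F) I produce a quasicontinuous $v=u$ q.e.\ and, using (E), enclose $\{u\neq v\}$ in a small open set, so that $u$ is continuous off the union of two small open sets, i.e.\ (A). Likewise (A)$\Rightarrow$((D) and (G)) via the chain together with (A)$\Rightarrow$(G), while ((D) and (G))$\Rightarrow$(A) is immediate, since (G) makes every Newtonian function weakly quasicontinuous and (D) promotes it to quasicontinuous. The most delicate arrow is (F)$\Rightarrow$(E): here $\chi_{E_0}$ is useless because its quasicontinuous representative is merely $0$, so I would instead fix a Newtonian function $u$ of arbitrarily small norm with $u=+\infty$ on $E_0$ (available since $\CXP(E_0)=0$), pass to a quasicontinuous representative $v$, and use that the superlevel sets $\{v>t\}$ are quasiopen to enclose $E_0$, up to a null set, in open sets of capacity tending to $0$. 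The genuine difficulty is that the residual null set $\{u\neq v\}\cap E_0$ is not covered by this single function; I expect to absorb it by iterating the construction (or by a countable union over thresholds), and I regard this handling of the residual null set as the main obstacle of the positive part.

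It remains to show that no arrow can be reversed and that all seven properties can fail at once, and this is where the concrete constructions carry the load. For the simultaneous failure I would invoke Theorem~\ref{thm-Linfty-intro} with $\P$ the von Koch snowflake and $X=L^\infty$: part~(i) supplies a weakly quasicontinuous but non-quasicontinuous $\chi_{\{x_0\}}$, so (A) and (D) fail; part~(ii) states that $\CinftyP$ is not outer, even for null sets, so (B), (C) and (E) fail; and part~(iii) gives $\chi_F\in\Ninfty(\P)$ that is not weakly quasicontinuous, so (G) fails, whence $\chi_F$ can have no quasicontinuous representative and (F) fails as well. For the individual non-implications, namely (B)$\not\Rightarrow$(A), (C)$\not\Rightarrow$(B), (D)$\not\Rightarrow$(C), (B)$\not\Rightarrow$(G), (G)$\not\Rightarrow$(F) and (E)$\not\Rightarrow$(F), I would calibrate the remaining examples developed in the paper (the damped topologist's sine curve, the Cantor ternary set and the Sierpi\'nski carpet, with suitable lattices $X$), each engineered so that the left-hand property holds while the right-hand one fails. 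Verifying that each example lands on exactly the intended side of \emph{every} property, so that precisely the stated arrows survive, is the second and more laborious obstacle.
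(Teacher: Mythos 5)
There is a genuine gap, and it concerns the pair \ref{b-repr}/\ref{b-outer-zero}: you have read the diagram backwards. The theorem asserts the \emph{negated} implication \ref{b-repr}\negimp\ref{b-outer-zero} (the crossed arrow in \eqref{eq-xy-figure}, witnessed in the paper by Example~\ref{ex-int-not-Loo}(a)); it does not claim \ref{b-repr}\imp\ref{b-outer-zero}, and indeed the paper explicitly records that this implication is refuted for general lattices and open only within Banach function spaces. Your ``most delicate arrow'' is therefore an attempt to prove a false statement: in Example~\ref{ex-int-not-Loo}(a), where $\P=\{0\}\cup\bigcup_{n}I_n$ and $\|u\|_X^p=\int_\P|u|^p\,dx/x$, every Newtonian function has a continuous representative (so \ref{b-repr} holds), yet $\CX(\{0\})=0$ while $\CX(G)=\infty$ for \emph{every} open $G\ni0$, since $\chi_G\notin X$. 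The obstacle you flagged --- the residual null set $\{u\ne v\}\cap E_0$ --- is thus not a technicality to be absorbed by iterating over thresholds; no enclosure argument can work because no open neighbourhood of $0$ has finite capacity. Dropping this arrow costs you nothing in the positive part, since your bookkeeping for \eqref{eq-and} only uses the conjunctions; but your list of separating examples then omits \ref{b-repr}\negimp\ref{b-outer-zero} (you list the opposite non-implication \ref{b-outer-zero}\negimp\ref{b-repr}, which is not the arrow drawn in the theorem), so the negative part of your plan does not match the statement.

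Two further steps need repair. First, your argument for \ref{b-weak-qcont}\imp\ref{b-outer-zero} is wrong as written: continuity of $\chi_{E_0}$ off an open set $U$ does \emph{not} force $E_0\subset U$. For $x\in E_0\setminus U$ you only obtain an open $B\ni x$ with $B\setminus U\subset E_0$, i.e.\ $B\subset U\cup E_0$; concretely, for $E_0=\{x_0\}$ of zero capacity and $U=B(x_0,r)\setminus\{x_0\}$, the restriction of $\chi_{\{x_0\}}$ to $\P\setminus U$ is continuous although $x_0\notin U$, and the empty interior of $E_0$ yields no contradiction. The correct conclusion, as in the paper's proof, is that these neighbourhoods make $U\cup E_0$ itself open, with $\CX(U\cup E_0)\le\cconc\CX(U)+\cconc^2\CX(E_0)<\eps$ by the quasisubadditivity \eqref{eq-CX-quasi-subadd}; that open set is the required outer enclosure. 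Second, in \ref{b-qcont}\imp\ref{b-outer} you must obtain the outer constant exactly $1$ in the quasinormed setting; your sketch adds a correction function covering the exceptional set, which naively inflates the capacity bound by the modulus of concavity $\cconc$ and would only yield \ref{b-qouter}. The paper circumvents this via the continuity of the Newtonian quasinorm (Lemma~\ref{lem-cont-NX-norm}) applied to $w_k=u/(1-\eps)+v_k$ with $\|v_k\|_{\NX}\to0$, which is precisely the new ingredient beyond Mal\'y's normed-case argument; your proposal does not address this point.
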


\begin{remark} 
In general, we do not know if \ref{b-outer-zero}\imp\ref{b-weak-qcont}
nor if $(\ref{b-outer-zero} \text{ and  } \ref{b-wqcont}) \imp \ref{b-qcont}$.

All our counterexamples $X$ (showing the negated implications
in \eqref{eq-xy-figure}) are normed 
(i.e.\ not only quasinormed) Banach function
lattices and moreover also satisfy 
\ref{Fatou} and~\ref{df:BFL.locL1} in the 
Definition~\ref{deff-BFS} of Banach function spaces,
and most of them
satisfy \ref{df:BFS.finmeasfinnorm}.
It is thus only the implication \ref{b-repr}\imp
\ref{b-outer-zero}
which we do not know if it holds or not 
within the class of Banach function spaces.
But we do know that \ref{b-wqcont}\negimp\ref{b-outer-zero},
by either one of Examples~\ref{ex-2^{-n}},
\ref{ex-int-Loo} and~\ref{ex-top-sine}, 
see Table~\ref{table-ex} in Section~\ref{sect-ex-Loo}.

The underlying metric space $\P$  is a compact subset 
of $\R$ or $\R^2$ in all our counterexamples 
(showing the negated implications
in \eqref{eq-xy-figure},
as well as in \eqref{ex-xy-Linfty} below) except for Example~\ref{ex-6.13} 
(showing that \ref{b-weak-qcont}\negimp\ref{b-qouter}),
in which $\P \subset \R^2$ is unbounded and closed.
\end{remark}

For the special case $X=L^\infty$ we have the following result.
Similarly to Banach function spaces, we do not know if 
\ref{b-repr}\imp\ref{b-outer-zero}
(which in this case is equivalent to
\ref{b-repr}\eqvnospace\ref{b-qcont}).

\begin{thm} \label{thm-qcont-char-Linfty} 
Consider the  statements \ref{b-qcont}--\ref{b-wqcont} 
in Theorem~\ref{thm-qcont-char} for $X=L^\infty$ 
and a general metric space $\P$,
and in addition the following statement\/\textup:
\begin{enumerate}
\renewcommand{\theenumi}{\textup{(\Alph{enumi})}}%
\setcounter{enumi}{\value{saveenumi}}
\item \label{b-H}
$\Cinfty(E)>0$ for every nonempty set $E \subset \P$.
\end{enumerate}
Then \ref{b-outer}--\ref{b-outer-zero} are equivalent
and
\begin{equation}  \label{ex-xy-Linfty}
\begin{split} 
\xymatrix{
    (\ref{b-outer} \text{ and } \ref{b-wqcont}) 
\ar@{<=>}[r]
&\ref{b-qcont} \ar@{=>}[r]^{\Longnegrevimp}
\ar@{=>}[d]
& \ref{b-outer} 
\ar@{<=>}[r]
\ar@{<=}[d]^{\textstyle\text{\rlap{\kern -0.65em $\not$}} \Downnegimp}
& \ref{b-H}
\\
& \ref{b-repr} \ar@{=>}[r]^{\Longnegrevimp} & 
\ref{b-wqcont}\rlap{\textup{.}} 
}
\end{split}
\end{equation}
Moreover, there are examples when all of these properties fail.
\end{thm}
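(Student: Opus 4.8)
The plan is to obtain almost everything from Theorem~\ref{thm-qcont-char} applied with $X=L^\infty$, and to supply one genuinely $L^\infty$-specific ingredient: \emph{every nonempty open set has $\Cinfty$-capacity at least $1$}. Indeed, a nonempty open $G$ contains a ball $B$ with $\mu(B)>0$, so any $u\in\Ninfty$ with $u\ge1$ on $G$ satisfies $\|u\|_{L^\infty}\ge1$ and hence $\|u\|_{\Ninfty}\ge1$; consequently $\inf\{\Cinfty(G):E\subset G\text{ open}\}\ge1$ for every nonempty $E$. From this the equivalences among the capacity statements are within reach. If $\Cinfty$ is an outer capacity, then every nonempty $E$ has $\Cinfty(E)\ge1>0$, which is \ref{b-outer}\imp\ref{b-H}. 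For \ref{b-outer-zero}\eqvnospace\ref{b-H}: a nonempty set of zero capacity would have outer capacity $\ge1$, contradicting outer regularity on zero-capacity sets, giving \ref{b-outer-zero}\imp\ref{b-H}; conversely, under \ref{b-H} the empty set is the only zero-capacity set, so \ref{b-outer-zero} holds trivially.

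Combining this with the chain \ref{b-outer}\imp\ref{b-qouter}\imp\ref{b-weak-qcont}\imp\ref{b-outer-zero} read off from \eqref{eq-xy-figure}, the assertion that \ref{b-outer}--\ref{b-outer-zero} and \ref{b-H} are all equivalent reduces to the single implication \ref{b-H}\imp\ref{b-outer} (equivalently \ref{b-outer-zero}\imp\ref{b-outer}). I expect this to be the main obstacle. It cannot be routed through quasicontinuity, for we will prove that \ref{b-outer} implies neither \ref{b-qcont} nor \ref{b-repr}, so the equivalences in \eqref{eq-and} are of no help here. The target is to show that, under \ref{b-H}, the capacity and its outer version agree on every $E$. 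Normalising a near-optimal test function to $0\le u\le1$ with $u=1$ on $E$, the superlevel sets $\{u>t\}$, $t<1$, contain $E$ and have capacity close to $\Cinfty(E)$, but need not be open; the crux is that for $X=L^\infty$ the only obstruction to enclosing such a set in an open neighbourhood of comparable capacity is carried by a set of zero capacity, which \ref{b-H} forces to be empty. The transparent model is the curve-free case: when $\P$ carries $L^\infty$-almost no nonconstant rectifiable curves one has $\Ninfty=L^\infty$ and $\Cinfty$ is two-valued, with $\Cinfty(E)=0$ if $\mu^*(E)=0$ and $\Cinfty(E)=1$ otherwise; under \ref{b-H} this forces both $\Cinfty$ and its outer version to equal $1$ on every nonempty set, so \ref{b-outer} holds at once. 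I would therefore isolate, as a separate lemma, the statement that for $X=L^\infty$ outer regularity can fail only through nonempty sets of zero capacity.

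The rest of the diagram \eqref{ex-xy-Linfty} is then formal. By \eqref{eq-and}, \ref{b-qcont} is equivalent to the conjunction of \ref{b-weak-qcont} and \ref{b-wqcont}; substituting the just-proved \ref{b-weak-qcont}\eqvnospace\ref{b-outer} yields that \ref{b-qcont} is equivalent to the conjunction of \ref{b-outer} and \ref{b-wqcont}. The implications \ref{b-qcont}\imp\ref{b-outer}, \ref{b-qcont}\imp\ref{b-repr} and \ref{b-repr}\imp\ref{b-wqcont} are read directly off \eqref{eq-xy-figure}.

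Finally, the negated implications and the closing claim that all eight properties may fail are furnished by explicit spaces. For \ref{b-outer}\negimp\ref{b-qcont} one takes a compact countable space with a purely atomic measure and no rectifiable curves: there $\Cinfty$ is two-valued so \ref{b-outer} (hence \ref{b-H} and \ref{b-qouter}--\ref{b-outer-zero}) holds, while a bounded function with a jump at a non-isolated atom violates \ref{b-qcont} (and \ref{b-repr}). The separations \ref{b-wqcont}\negimp\ref{b-outer}, \ref{b-outer}\negimp\ref{b-wqcont} and \ref{b-wqcont}\negimp\ref{b-repr} are realised by the further examples collected in Section~\ref{sect-ex-Loo}; note that \ref{b-wqcont}\negimp\ref{b-outer} is, via \ref{b-outer}\eqvnospace\ref{b-outer-zero}, the same as \ref{b-wqcont}\negimp\ref{b-outer-zero} already recorded there. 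For the simultaneous failure of all properties one may take the von Koch snowflake curve of Theorem~\ref{thm-Linfty-intro}: \ref{b-qcont} and \ref{b-weak-qcont} fail by its part (a), \ref{b-outer}--\ref{b-H} fail by its part (b) through the equivalences above, \ref{b-wqcont} fails by its part (c), and \ref{b-repr} then fails as well since \ref{b-repr}\imp\ref{b-wqcont}.
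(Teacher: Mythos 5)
Your reduction is sound as far as it goes: you correctly isolate \ref{b-H}\imp\ref{b-outer} as the only nontrivial link needed to close the cycle \ref{b-outer}\imp\ref{b-qouter}\imp\ref{b-weak-qcont}\imp\ref{b-outer-zero}\imp\ref{b-H}\imp\ref{b-outer}, and your derivations of \ref{b-outer}\imp\ref{b-H}, of \ref{b-outer-zero}\eqvnospace\ref{b-H}, of the identification of \ref{b-qcont} with (\ref{b-outer} and \ref{b-wqcont}) via \eqref{eq-and}, and your assignment of counterexamples all check out (your atomic-measure example for \ref{b-outer}\negimp\ref{b-qcont} works provided the non-isolated limit point is itself an atom, since otherwise \ref{b-H} fails, as in Example~\ref{ex-2^{-n}}). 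But there is a genuine gap exactly where you predicted the main obstacle: you never prove the ``separate lemma'' that for $X=L^\infty$ outer regularity can fail only at nonempty sets of zero capacity, and your superlevel-set sketch supplies no mechanism for it. Since every nonempty open set has $\Cinfty$-capacity exactly $1$, outer regularity at a nonempty $E$ holds if and only if $\Cinfty(E)=1$; so a priori a set with, say, $\Cinfty(E)=\tfrac12$ would violate \ref{b-outer} without producing any nonempty zero-capacity set, and \ref{b-H} would not be contradicted. Your lemma is thus equivalent to a $0$--$1$ law for $\Cinfty$, which you establish only in the curve-free model where $\Ninfty=L^\infty$, whereas the theorem concerns a general metric space $\P$, possibly with plenty of rectifiable curves.

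The paper closes precisely this gap with Proposition~\ref{prop-Linfty-gen}\ref{r3}: for \emph{every} $E\subset\P$, either $\Cinfty(E)=0$ or $\Cinfty(E)=1$, in any $\P$ satisfying the standing assumptions. Its proof is a short rescaling argument quite different from your superlevel-set idea: if $\Cinfty(E)<1$, pick $u$ with $\chi_E\le u\le1$ and an upper gradient $g$ such that $\|u\|_{L^\infty}+\|g\|_{L^\infty}<1$, set $a=\|u\|_{L^\infty}<1$ and $v=(u-a)_+/(1-a)$, which has upper gradient $g/(1-a)\in L^\infty$. Then $v\ge1$ on $E$ while $v=0$ a.e., and the crucial Newtonian upgrade (Mal\'y~\cite[Corollary~6.14]{MalL1}: a.e.-equality implies q.e.-equality for Newtonian functions) yields $v=0$ q.e., whence $\Cinfty(E)\le\|v\|_{\NinftyP}=0$. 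With this $0$--$1$ law, \ref{b-H}\imp\ref{b-outer} is immediate (every nonempty set and every open superset has capacity $1$), the contrapositive $\neg$\ref{b-H}\imp$\neg$\ref{b-outer-zero} follows from $\Cinfty(G)=1$ for nonempty open $G$, and the rest of your argument goes through verbatim; without it, the central equivalence of \ref{b-outer}--\ref{b-outer-zero} and \ref{b-H} remains unproven.
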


The following result illustrates the importance of the
 Vitali--Carath\'eodory property for some of 
the properties in Theorem~\ref{thm-qcont-char}. 
It also strengthens Proposition~3.5 in~Mal\'y~\cite{MalL4},
which says that \ref{b-outer-zero} holds under the assumptions in
\ref{VC-qcont}.

\begin{thm} \label{thm-VC}
Assume that $X$ has the Vitali--Carath\'eodory property.
\begin{enumerate}
\item \label{VC-outer}
If $\P$ is proper\/
\textup(i.e.\ all closed balls are compact\/\textup),
  then $\CXP$ is an outer capacity, i.e.\ \ref{b-outer} holds.
\item \label{VC-qcont}
If $\P$ is locally compact, then every weakly quasicontinuous
$u:\P \to \eR$ 
is quasicontinuous, i.e.\ \ref{b-weak-qcont} holds.
\end{enumerate}
\end{thm}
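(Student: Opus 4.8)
The plan is to run both parts through a single device: the Vitali--Carath\'eodory property (Definition~\ref{def-VC}) is used to replace a near-optimal admissible function for a set by a lower semicontinuous one, whose superlevel sets are automatically open. For \ref{VC-outer}, fix $E\subset\P$ with $\CXP(E)<\infty$ (the case $\CXP(E)=\infty$ being vacuous) and $\varepsilon>0$, and pick $u\in\NX(\P)$ with $0\le u\le1$, $u\ge1$ on $E$, and $\|u\|_{\NX(\P)}\le\CXP(E)+\varepsilon$. The reduction I aim for is the approximation statement that there is a lower semicontinuous $v\ge u$ with $v\in\NX(\P)$ and $\|v\|_{\NX(\P)}\le\|u\|_{\NX(\P)}+\varepsilon$. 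Granting this, for each $t<1$ the set $G_t=\{v>t\}$ is open, contains $E$ because $v\ge u\ge1>t$ there, and $v/t$ is admissible for $G_t$, so $\CXP(G_t)\le t^{-1}\|v\|_{\NX(\P)}$. Letting $t\upto1$ and $\varepsilon\to0$ produces open neighbourhoods of $E$ whose capacity is arbitrarily close to $\CXP(E)$, which is exactly \ref{b-outer}.

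I expect the approximation statement above to be the main obstacle, since it is a Vitali--Carath\'eodory theorem for Newtonian functions rather than for $X$. Applying Definition~\ref{def-VC} directly to $u$ controls only $\|v-u\|_X$ and says nothing about upper gradients, so the delicate point is to approximate $u$ and a minimal upper gradient $g$ \emph{simultaneously} by lower semicontinuous $\tilde u\ge u$ and $\tilde g\ge g$ in such a way that $\tilde g$ remains an upper gradient of $\tilde u$. The natural route is to rebuild the function from the semicontinuous gradient by a curve-integral (length-type) construction; the nontrivial step is then to verify lower semicontinuity of the rebuilt function in the original metric of $\P$. This is precisely where properness enters: passing to the limit along near-optimal connecting curves requires extracting a convergent subsequence of curves, which compactness of closed balls supplies. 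Thus properness is the geometric input that converts the metric and measure-theoretic content of Definition~\ref{def-VC} into the topological openness of $G_t$.

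For \ref{VC-qcont}, recall that a weakly quasicontinuous $u\colon\P\to\eR$ is continuous off sets of arbitrarily small capacity, but these sets need not be open: for each $\varepsilon>0$ there is a set $U$ with $\CXP(U)<\varepsilon$ and $u|_{\P\setminus U}$ continuous. To obtain quasicontinuity I only need to enlarge such a $U$ to an open $G\supset U$ with $\CXP(G)$ still small; then $\P\setminus G\subset\P\setminus U$ and $u|_{\P\setminus G}$, being a restriction of the continuous $u|_{\P\setminus U}$, is continuous, so $u$ is quasicontinuous, i.e.\ \ref{b-weak-qcont} holds. This is again outer regularity, now only for the small-capacity set $U$. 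The key point is that, because $\CXP(U)$ may be taken arbitrarily small, the admissible competitors for $U$ have arbitrarily small norm and the lower-semicontinuous approximation of \ref{VC-outer} only has to be carried out near $U$; local compactness then supplies the compact neighbourhoods in which the curve-limit argument for lower semicontinuity runs, so the global compactness of properness is not needed for this weaker conclusion.

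In summary, both parts hinge on the same lemma --- upgrading the Vitali--Carath\'eodory property of $X$ to a lower semicontinuous, upper-gradient-compatible approximation of Newtonian functions --- and the principal difficulty is the lower semicontinuity of the reconstructed function, which rests on a compactness argument for connecting curves supplied globally by properness in \ref{VC-outer} and only locally, near the small exceptional set, by local compactness in \ref{VC-qcont}.
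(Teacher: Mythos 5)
Your reduction (pass to a lower semicontinuous admissible function, take the open superlevel set $G_t=\{v>t\}$, dilate) and your identification of the route --- rebuild a lower semicontinuous function from an lsc majorant of the gradient via curve integrals, with properness supplying the curve-compactness behind lower semicontinuity --- is exactly the paper's strategy in outline. But the proposal stops where the paper's actual work begins: the approximation lemma you reduce to is never proved, and in the form you state it (lsc $v\ge u$ \emph{everywhere} with $\|v\|_{\NX}\le\|u\|_{\NX}+\eps$) it is stronger than what the paper establishes and not obviously attainable; the paper only produces an lsc $\ut$ with $\ut=1$ on $E$, controlled norm and an lsc upper gradient, and it never makes $\ut\ge u$ globally (taking a maximum with the non-lsc $u$ would destroy lower semicontinuity, though for your $G_t$ argument admissibility on $E$ suffices, so this is repairable). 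The substantive gaps are three devices without which your ``natural route'' does not run. First, VC must be applied to $u$ itself, not only to $g$: the curve-integral rebuilding needs \emph{closed} base sets on which $u$ is provably small, and $\{u\le t\}$ need not be closed; the paper gets them as complements of the open superlevel sets $W_k$ of a dilation of the lsc majorant $v\ge u$. Second, rebuilding from a single base set does not control $\|\ut\|_X$: the rebuilt function is bounded by $1$ but can be positive on a huge set, and in a general quasi-Banach function lattice $\chi_{\{\ut>0\}}$ may have large or infinite norm (property \ref{df:BFS.finmeasfinnorm} can fail, cf.\ Example~\ref{ex-int-not-Loo}(a)); the paper discretizes into $N$ layers $u_k$ anchored at $\P\setminus W_k$ and takes $\ut=\min_k u_k$, so that $\ut\le\sum_{k=1}^{N}\tfrac{k}{N}\chi_{W_{k-1}\setminus W_k}\le w+\chi_{W_0}/N$, a layer-cake estimate dominating $\|\ut\|_X$ by $\|v\|_X$ up to $\eps$. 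Third, Lemma~\ref{lem:int-rho-lsc} needs the integrand bounded below by $c>0$ off the base set --- this is what bounds curve lengths by $1/c$ and lets the properness/Arzel\`a--Ascoli mechanism apply --- and is arranged by adding $\chi_{W_0}/N$, whose norm is made small via the continuity of the quasinorm. None of this is routine, and it is the mathematical content of part~\ref{VC-outer}.

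For part~\ref{VC-qcont} the gap is of a different kind: ``carry out the approximation near $U$'' is not a construction. The exceptional set $U$ is a single global set and may be spread over all of $\P$, so there is no compact neighbourhood of $U$ in which to run part~\ref{VC-outer}. The paper instead fixes a countable cover by balls $B_j$ with $3\clB_j$ compact, invokes weak quasicontinuity \emph{once per ball} with capacities $(2\cconc)^{-j-1}\eps r_j$ (note the radius weight), applies part~\ref{VC-outer} inside the proper space $3\clB_j$ (VC being inherited by $X(3\clB_j)$), and then transfers the local capacity bounds to global ones by multiplying with $(1/r_j)$-Lipschitz cutoffs and the product rule for $X$-weak upper gradients --- this is where the $r_j$ weight is consumed and where local, rather than global, compactness enters. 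Your sketch contains neither the localization scheme nor the cutoff/product-rule transfer, and enlarging a single global $U$ at once is precisely the step that fails without them.
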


Theorem~\ref{thm-Linfty-intro} shows that the
Vitali--Carath\'eodory assumption cannot be dropped 
even if one merely aimed to show in~\ref{VC-outer} that $\CXP$ is 
an outer capacity for sets of zero capacity (i.e.\ \ref{b-outer-zero}).
We do not know if
the assumption that $\P$ is proper can be omitted 
in Theorem~\ref{thm-VC}, nor if the remaining properties
from Theorem~\ref{thm-qcont-char} can fail 
when $X$ has the Vitali--Carath\'eodory property.

However, we have the following result.
That \ref{b-repr} and~\ref{b-wqcont} hold under this assumption
was shown by Mal\'y~\cite[Proposition~4.5]{MalL4}.
Note that $\P$ is not assumed to be locally complete here.

\begin{thm} \label{thm-qcont-char-cont-dense}
Assume that continuous functions are dense in $\NX(\P)$.
Then \ref{b-qcont}--\ref{b-outer-zero} are all equivalent
and \ref{b-repr}--\ref{b-wqcont} hold.
However,
it can happen that \ref{b-qcont}--\ref{b-outer-zero} fail.
\end{thm}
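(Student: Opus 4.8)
The plan is to derive everything from the general relations \eqref{eq-and} and \eqref{eq-xy-figure}, using the density hypothesis only to activate \ref{b-repr} and \ref{b-wqcont}. First I would record that \ref{b-repr} and \ref{b-wqcont} hold. Every continuous function is quasicontinuous, so if $u$ is the $\NX(\P)$-limit of continuous functions $u_i$, then after passing to a subsequence with $\sum_i\|u_{i+1}-u_i\|_{\NXP}<\infty$, the capacitary weak-type estimate for Newtonian functions yields uniform convergence of $u_i$ off sets of arbitrarily small capacity; the limit is therefore weakly quasicontinuous and has a quasicontinuous representative. This is precisely Mal\'y~\cite[Proposition~4.5]{MalL4}, which I would cite for \ref{b-repr} and \ref{b-wqcont}.

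With \ref{b-repr} available, the equivalence of \ref{b-qcont}--\ref{b-outer-zero} is read off from the diagram. By \eqref{eq-and} we have (\ref{b-outer-zero} and \ref{b-repr}) \eqvnospace \ref{b-qcont}; since \ref{b-repr} always holds under our hypothesis, this collapses to \ref{b-outer-zero} \eqvnospace \ref{b-qcont}. Together with the chain \ref{b-qcont} \imp \ref{b-outer} \imp \ref{b-qouter} \imp \ref{b-weak-qcont} \imp \ref{b-outer-zero} from \eqref{eq-xy-figure}, the extra implication \ref{b-outer-zero} \imp \ref{b-qcont} closes the cycle, so \ref{b-qcont}--\ref{b-outer-zero} are mutually equivalent. (Equivalently, one could invoke (\ref{b-weak-qcont} and \ref{b-wqcont}) \eqvnospace \ref{b-qcont} together with \ref{b-wqcont}.)

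It then remains to show that these equivalent properties may nonetheless fail. The apparent paradox---that density of continuous functions should force quasicontinuity---is dissolved by the previous paragraphs: density only delivers weak quasicontinuity, i.e.\ continuity off small-capacity sets that need not be \emph{open}, and the passage to genuine quasicontinuity is exactly the outer property \ref{b-outer-zero}. Accordingly, it suffices to construct $X$ and $\P$ for which continuous functions are dense in $\NX(\P)$ while $\CXP$ is not outer on some null set. I would produce a set $N$---say a single point $x_0$---with $\CXP(\{x_0\})=0$ but $\inf\{\CXP(G):x_0\in G\text{ open}\}>0$, and then alter a continuous function on $N$ to obtain $u\in\NX(\P)$ that is weakly quasicontinuous (by the above) yet not quasicontinuous. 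Such configurations are precisely those in which \ref{b-wqcont} holds while \ref{b-outer-zero} fails, so I would take one of Examples~\ref{ex-2^{-n}}, \ref{ex-int-Loo} or~\ref{ex-top-sine} and additionally check density of continuous functions there.

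The crux, and the main obstacle, is this last construction: making the strong density hypothesis coexist with failure of outer capacity on a null set. The standard mechanisms for defeating outer capacity are in tension with density; for instance, for $X=L^\infty$ every nonempty open set has capacity at least $1$, which obstructs approximation by continuous functions. One must therefore engineer a metric measure space whose topology and rectifiable-curve structure disagree at $N$---so that $N$ carries zero capacity (no rectifiable curves force a jump across it) yet cannot be enclosed in open sets of small capacity---while keeping the norm of $X$ mild enough to permit continuous approximation. Verifying both halves at once for a single explicit space is where the substantive work lies, and is what the cited examples are designed to supply.
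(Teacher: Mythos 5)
Your positive part is correct and matches the paper: \ref{b-repr} and \ref{b-wqcont} are exactly Mal\'y~\cite[Proposition~4.5]{MalL4}, and the equivalence of \ref{b-qcont}--\ref{b-outer-zero} then follows from \eqref{eq-and} together with the chain \ref{b-qcont}\imp\ref{b-outer}\imp\ref{b-qouter}\imp\ref{b-weak-qcont}\imp\ref{b-outer-zero} in Theorem~\ref{thm-qcont-char}, which is precisely the paper's argument.

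The counterexample half, however, has a genuine gap: Examples~\ref{ex-2^{-n}}, \ref{ex-int-Loo} and~\ref{ex-top-sine} cannot work, because in all three of them \ref{b-repr} fails (see Table~\ref{table-ex}), and hence --- by the very first part of the theorem you are proving --- continuous functions are \emph{not} dense in $\NX(\P)$ there; indeed the paper verifies non-density explicitly in Examples~\ref{ex-2^{-n}} and~\ref{ex-top-sine}. You even identify the obstruction yourself (for $X=L^\infty$ every nonempty open set has capacity $\ge 1$, blocking continuous approximation), yet all three examples you propose to check are $L^\infty$-based, so the density verification you defer to them is doomed. What you need is a configuration where \ref{b-wqcont}, \ref{b-repr} \emph{and} density all hold while \ref{b-outer-zero} fails, and the paper's mechanism for this is Example~\ref{ex-int-not-Loo}(a): on $\P=\{0\}\cup\bigcup_{n\ge1}[2^{-1-2n},2^{-2n}]$ take the Banach function lattice $\|u\|_X^p=\int_\P|u|^p\,dx/x$ (failing \ref{df:BFS.finmeasfinnorm}). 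Then $\CX(\{0\})=0$ since no rectifiable curve reaches $0$, while $\CX(G)=\infty$ for every open $G\ni 0$ because $\chi_G\notin X$ --- so \ref{b-outer-zero} fails not by making open sets merely expensive but by making them unaffordable; and crucially every $u\in\NX$ satisfies $u(x)\to 0$ as $x\to 0{+}$ (the weight forces an upper gradient in $X$ to have small line integrals over $I_n$ for large $n$), which is what makes the truncations $u\chi_{\bigcup_{n\le k}I_n}$ continuous and norm-convergent to $u$, giving density. Your strategy of altering a continuous function on a capacity-null set only produces a weakly quasicontinuous, non-quasicontinuous function, i.e.\ a failure of \ref{b-weak-qcont}; it does not by itself yield the required decay at $0$ for \emph{all} Newtonian functions, which is the substantive verification your proposal leaves unsupplied.
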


Our 
Example~\ref{ex-int-not-Loo}(a)
showing that \ref{b-qcont}--\ref{b-outer-zero} can fail
even when continuous functions are dense in $\NX(\P)$ 
is a Banach function lattice (satisfying also \ref{Fatou} 
and~\ref{df:BFL.locL1},
but not \ref{df:BFS.finmeasfinnorm}).
We do not know if this can happen 
when  $X$ is a Banach function space.

Theorems~\ref{thm-VC}\ref{VC-outer} and \ref{thm-qcont-char-cont-dense}
give the following result, which appeared already in 
Mal\'y~\cite[Corollary~4.6 and Propositions~3.5, 4.3, 4.5 and~4.8]{MalL4}
(except for \ref{b-outer}, 
which was only deduced under the additional assumption that $X$ is normed).

\begin{cor} \label{cor-qcont}
If $\P$ is locally
compact, $X$ has the Vitali--Carath\'eodory property and 
continuous functions are dense in $\NXP$, then
all functions in $\NXP$ are quasicontinuous.

Hence, it follows from Theorem~\ref{thm-qcont-char}  
that \ref{b-qcont}--\ref{b-wqcont} are all true.
\end{cor}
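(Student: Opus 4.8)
The plan is to channel the two hypotheses into the two quoted theorems and let the equivalence scheme of Theorem~\ref{thm-qcont-char} assemble the conclusion; I expect no new estimate to be required. First I would note that, since continuous functions are dense in $\NXP$, Theorem~\ref{thm-qcont-char-cont-dense} applies directly and gives simultaneously that the statements \ref{b-qcont}--\ref{b-outer-zero} are mutually equivalent and that \ref{b-repr}--\ref{b-wqcont} hold unconditionally. In particular, to establish the first assertion of the corollary it then suffices to verify any single member of the equivalent block \ref{b-qcont}--\ref{b-outer-zero}.

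Next I would observe that the two remaining hypotheses --- that $\P$ is locally compact and that $X$ has the Vitali--Carath\'eodory property --- are precisely those of Theorem~\ref{thm-VC}\ref{VC-qcont}, which therefore yields \ref{b-weak-qcont}: every weakly quasicontinuous $u\colon\P\to\eR$ is quasicontinuous. Since \ref{b-weak-qcont} lies inside the equivalent block identified in the previous step, \ref{b-qcont} follows, i.e.\ every function in $\NXP$ is quasicontinuous, which is the main claim. Equivalently, one could combine this \ref{b-weak-qcont} with the \ref{b-wqcont} already furnished by Theorem~\ref{thm-qcont-char-cont-dense} and quote the equivalence recorded in \eqref{eq-and}, namely that \ref{b-qcont} is equivalent to the conjunction of \ref{b-weak-qcont} and \ref{b-wqcont}.

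For the closing sentence I would read off from Theorem~\ref{thm-qcont-char} that, once \ref{b-qcont} is known, every one of \ref{b-qcont}--\ref{b-wqcont} holds. Indeed, \eqref{eq-and} then also delivers \ref{b-outer-zero} and \ref{b-repr}, the top row of the diagram \eqref{eq-xy-figure} propagates \ref{b-qcont}\imp\ref{b-outer}\imp\ref{b-qouter}\imp\ref{b-weak-qcont}, and the upward arrow \ref{b-repr}\imp\ref{b-wqcont} of \eqref{eq-xy-figure} supplies the last property. Hence \ref{b-qcont}--\ref{b-wqcont} are all true.

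I anticipate essentially no obstacle beyond bookkeeping, since all the mathematical content sits in Theorems~\ref{thm-VC} and~\ref{thm-qcont-char-cont-dense}. The only point that genuinely calls for care is to match the local-compactness hypothesis with part~\ref{VC-qcont} of Theorem~\ref{thm-VC} rather than with part~\ref{VC-outer}, the latter demanding the stronger assumption that $\P$ be proper; invoking \ref{VC-qcont} keeps the deduction within the stated generality.
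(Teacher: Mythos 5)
Your proposal is correct and essentially reproduces the paper's (one-line) derivation: Theorem~\ref{thm-qcont-char-cont-dense} gives \ref{b-repr}--\ref{b-wqcont} and the equivalence of \ref{b-qcont}--\ref{b-outer-zero}, Theorem~\ref{thm-VC}\ref{VC-qcont} supplies \ref{b-weak-qcont}, and the equivalences \eqref{eq-and} and \eqref{eq-xy-figure} of Theorem~\ref{thm-qcont-char} then yield \ref{b-qcont} together with all of \ref{b-qcont}--\ref{b-wqcont}. Your closing caveat is well taken: the paper's text cites Theorem~\ref{thm-VC}\ref{VC-outer}, which assumes $\P$ proper, whereas under the corollary's weaker local-compactness hypothesis it is indeed part~\ref{VC-qcont} that must be invoked, exactly as you do.
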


Therefore, at least one of the three assumptions in Corollary~\ref{cor-qcont}
has to be violated to be able to find a non-quasicontinuous Newtonian
function.

Some of the implications in Theorem~\ref{thm-qcont-char} are
of course trivial.
For the nontrivial ones, Mal\'y~\cite[Propositions~4.3 and~4.8]{MalL4}
has earlier shown that \ref{b-outer}\imp\ref{b-weak-qcont},
and for normed $X$ that \ref{b-qcont}\imp\ref{b-outer}.

Since the $L^p$ norm, with $1 \le p <\infty$, 
 has the Vitali--Carath\'eodory property
(see e.g.\ Mal\'y~\cite[Proposition~2.1]{MalL4}),
Corollary~\ref{cor-qcont}
recovers several known results for the standard Newtonian
$\Np$ spaces, assuming that $\P$ is locally compact and 
such that continuous functions are dense in $\Np(\P)$,
cf.\ 
\cite[Section~5.2]{BBbook},
\cite[Theorem~9.1]{BBsemilocal},
\cite[p.~1190]{BBLeh1},
\cite{BBS5},
\cite{EB-PC}
and
\cite[Remark~7.4.3 and Corollary~8.2.5]{HKSTbook}.

If $u$ has a weakly quasicontinuous representative $v=u$ q.e, then 
$u$ is itself weakly quasicontinuous.
(The proof of this fact is very similar to the proof of
\ref{b-qouter}\imp\ref{b-weak-qcont}.)
Hence, the ``weakly quasicontinuous version'' of 
\ref{b-repr} is equivalent to \ref{b-wqcont}.

The outline of the paper is as follows:
In Section~\ref{sect-prelim}
we introduce and discuss the necessary background 
on Banach function lattices and Newtonian spaces.
Most of the properties mentioned above are defined here.

Section~\ref{sect-1.2-1.4} is devoted to the proofs of 
Theorems~\ref{thm-qcont-char} and~\ref{thm-qcont-char-cont-dense}.
The various counterexamples showing the negated implications
are postponed to later, but a list is given here showing how they apply.
In Section~\ref{sect-1.6} we prove
Theorem~\ref{thm-VC}, while in 
Section~\ref{sect-local} we formulate local versions of 
\ref{b-qcont}, \ref{b-weak-qcont}, \ref{b-repr} and~\ref{b-wqcont},
and show that they are equivalent to their respective global versions.

In Sections~\ref{sect-ex-Loo} and~\ref{sect-ex-non-Loo}
we provide various counterexamples.
The summarizing Table~\ref{table-ex} is also provided. 
Examples based on $X=L^\infty$ are given in Section~\ref{sect-ex-Loo}, while
our non-$L^\infty$  examples are given in Section~\ref{sect-ex-non-Loo}.
Theorems~\ref{thm-Linfty-intro}
and~\ref{thm-qcont-char-Linfty} 
are  also proved in Section~\ref{sect-ex-Loo}.
The last two sections contain Example~\ref{exa:nonqcont-N1p(x)}, based 
on the variable exponent case $X=L^{p(\cdot)}$, 
and Example~\ref{ex-Sierpinski-new} dealing with 
$\Ninfty(\P)$ and $\NXP$ on the Sierpi\'nski carpet $S_{1/3}$.

\begin{ack}
A.~B. resp.\ J.~B. were supported by the Swedish Research Council,
  grants 2020-04011 resp.\ 2022-04048.
We would also like to thank Xiangfeng Yang for suggesting the probabilistic
proof used in Example~\ref{ex-Sierpinski-new}.
\end{ack}

\section{Banach function lattices and Newtonian spaces}
\label{sect-prelim}

We extend the measure $\mu$ as an outer measure defined for all $E \subset \P$.
Since $\mu$ is assumed to satisfy $0 < \mu(B) < \infty$ for every ball $B \subset \P$,
it easily follows that $\mu$ is $\sigma$-finite and 
that  $\P$ is separable and Lindel\"of.

A metric space is \emph{proper} if all 
closed balls
are compact.

A vector space $X = X(\P)$ of 
a.e.-equivalence classes of extended
real-valued measurable functions on $\P$ is a
\emph{quasi-Banach function lattice}
equipped with the quasinorm
$\|\cdot\|_X$ if the following axioms hold:
(Here $\Zp=\{1,2,\dots\}$ and a.e.\ is with respect to $\mu$.)
\begin{enumerate}
  \renewcommand{\theenumi}{(P\arabic{enumi})}
  \setcounter{enumi}{-1}
  \item \label{df:qBFL.initial} $\|\cdot\|_X$ determines the set $X$,
    i.e.\ $X = \{u: \|u\|_X < \infty\}$;
  \item \label{df:qBFL.quasinorm} $\|\cdot\|_X$ is a \emph{quasinorm}, i.e.
  \begin{itemize}
    \item $\|u\|_X = 0$ if and only if $u=0$ a.e.,
    \item $\|au\|_X = |a|\,\|u\|_X$ for every $a\in\R$ and every measurable function $u$,
    \item there is a constant $\cconc \ge 1$, the so-called
      \emph{modulus of concavity}, such that 
      \[
         \|u+v\|_X \le  \cconc(\|u\|_X+\|v\|_X)
      \]
 for all measurable functions $u$ and $v$;
  \end{itemize}
  \item $\|\cdot\|_X$ satisfies the \emph{lattice property}, i.e.\ if
    $|u|\le|v|$ a.e., then $\|u\|_X\le\|v\|_X$;
    \label{df:BFL.latticeprop}
  \renewcommand{\theenumi}{(RF)}
  \item \label{df:qBFL.RF} $\|\cdot\|_X$ satisfies the
    \emph{Riesz--Fischer property}, i.e.\ if $u_n\ge 0$ a.e.\@ for all
    $n\in\Zp$, then (with  $\cconc$ from \ref{df:qBFL.quasinorm})
\[
\biggl\|\sum_{n=1}^\infty u_n \biggr\|_X \le \sum_{n=1}^\infty \cconc^n \|u_n\|_X.
\]
\end{enumerate}
Observe that $X$ contains only functions that are finite a.e., which
follows from \ref{df:qBFL.quasinorm} and \ref{df:BFL.latticeprop}. In
other words, if $\|u\|_X<\infty$, then $|u|<\infty$ a.e.

\medskip

\emph{From now on we will always assume that 
$X$ is a quasi-Banach function lattice.}

\medskip

Throughout the paper, we will also assume that the quasinorm
$\|\cdot\|_X$ is \emph{continuous}, i.e.\ 
\begin{equation} \label{eq-cont-quasinorm}
\lim_{n\to\infty}\|u_n - u\|_X =0 
\quadimp
\lim_{n\to\infty}\|u_n\|_X = \|u\|_X. 
\end{equation}
No generality is lost
by this assumption as the Aoki--Rolewicz theorem (see
Benyamini--Lindenstrauss~\cite[Proposition~H.2]{BenLin} or
Maligranda~\cite[Theorem~1.2]{Mali}) implies that there is always an
equivalent quasinorm that is continuous.

The Riesz--Fischer property is actually equivalent to the completeness
of the quasinormed space $X$, provided that the conditions
\ref{df:qBFL.initial}--\ref{df:BFL.latticeprop} are satisfied and that
the quasinorm is continuous, see \cite[Theorem~1.1]{Mali}.
If $\cconc = 1$, then 
$\|\cdot\|_X$ is a norm and \eqref{eq-cont-quasinorm} is trivial. 
We then drop the prefix \emph{quasi} and 
call $X$ a \emph{Banach function lattice}.

\begin{deff} \label{deff-BFS}
A \textup{(}quasi\/\textup{)}Banach function lattice $X$
is a \emph{\textup{(}quasi\/\textup{)}Banach function space} 
if the following axioms are satisfied as well:
\begin{enumerate}
  \renewcommand{\theenumi}{(P\arabic{enumi})}
  \setcounter{enumi}{2}
\item \label{Fatou}
 $\|\cdot\|_X$ has the \emph{Fatou property}, i.e.\ if
  $0\le u_n \upto u$ a.e., then $\|u_n\|_X\upto\|u\|_X$;
  \item \label{df:BFS.finmeasfinnorm} 
if $E$ is measurable and $\mu(E)<\infty$, then
$\|\chi_E\|_X < \infty$;
 \item   \label{df:BFL.locL1}
if $E$ is measurable and $\mu(E)<\infty$, then
there is $C_E>0$ 
such that 
\[
\int_E |u|\,d\mu \le C_E \|u\|_X 
   \quad \text{for every measurable function $u$}.
\]
  \end{enumerate}
\end{deff}

Note that the Fatou property~\ref{Fatou} implies the 
Riesz--Fischer property~\ref{df:qBFL.RF}.
Axiom \ref{df:BFS.finmeasfinnorm} is equivalent to the condition
that $X$ contains all simple functions with support of finite measure.
Condition~\ref{df:BFL.locL1} implies that $X \subset L^1\loc(\P, \mu)$.

For a detailed treatise on Banach function lattices, 
see Lindenstrauss--Tzafriri~\cite{LinTza}. 
The theory of Banach function spaces is thoroughly developed in 
Bennett--Sharpley~\cite{BenSha} 
and
Pick--Kufner--John--Fu\v{c}\'{i}k~\cite{PiKuJoFu}.

In this paper, we will slightly deviate from this rather usual
definition of (quasi)-Banach function lattices. Namely, we will
consider $X$ to be a vector
space of functions defined
\emph{everywhere} instead of equivalence classes defined a.e.
This is crucial for the notion of ($X$-weak) upper gradient 
  to make sense, since
they are defined by the pointwise inequality~\eqref{eq:ug_def}.
The functional $\|\cdot\|_X$ is then really only a (quasi)seminorm.

A \emph{curve} is a continuous mapping from an interval,
and a \emph{rectifiable} curve is a curve with finite length.
Thus, a rectifiable
curve can be (and we will always assume that it is)
parametrized by arc length $ds$, see e.g.\@
Heinonen~\cite[Section~7.1]{heinonen}.

A statement holds for 
\emph{$X$-a.e.\@}
curve if the family $\Gamma_e$ of
exceptional curves, for which the statement fails, has
\emph{zero $X$-modulus}, i.e.\ if there is a Borel function $\rho \in
X$ such that $\int_\gamma \rho\,ds = \infty$ for every curve $\gamma
\in \Gamma_e$.

\begin{deff}
\label{df:ug}
  Let $u: \P \to \eR$. Then, a Borel function $g: \P \to [0,\infty]$ 
is an \emph{upper gradient} of $u$ if
\begin{equation}
 \label{eq:ug_def}
 |u(\gamma(0)) - u(\gamma(l_\gamma))| \le \int_\gamma g\,ds
\end{equation}
for every nonconstant rectifiable curve $\gamma: [0, l_\gamma]\to\P$. 
To make the formulation   
easier, we  use the convention that
$|(\pm\infty)-(\pm\infty)|=\infty$. 

If $g: \P \to [0,\infty]$ is measurable
and~\eqref{eq:ug_def} holds for 
$X$-a.e.\@ 
nonconstant rectifiable curve
$\gamma: [0, l_\gamma]\to\P$, then $g$ is an \emph{$X$-weak upper
  gradient}.
\end{deff}

Observe that the ($X$-weak) upper gradients are by no means given
uniquely. Indeed, if we have a function $u$ with an ($X$-weak) upper
gradient $g$, then $g+h$ is another ($X$-weak) upper gradient of $u$
whenever $h\ge0$ is a Borel (measurable) function.

\begin{deff}
The \emph{Newtonian space} based on $X$ is the space
\begin{equation}
  \label{eq:def-N1X-norm}
  \NX(\P)=  \NX 
  =
 \Bigl\{u\in X :
  \|u\|_{\NX}\coloneq \| u \|_X + \inf_g \|g\|_X <\infty \Bigr\},
\end{equation}
where the infimum is taken over all upper gradients $g$ of $u$.
\end{deff}

Note that we may equivalently define $\NX$ via $X$-weak upper gradients and take
the infimum over all $X$-weak upper gradients $g$ of $u$ in
\eqref{eq:def-N1X-norm} without changing the value of the Newtonian
quasinorm, see Mal\'y~\cite[Corollary~5.7]{MalL1}.
It was shown by Mal\'y~\cite[Theorem~4.6]{MalL2}
 that the infimum in the definition of $\|u \|_{\NX}$ is attained
 for functions in $\NX$ by a \emph{minimal $X$-weak upper
   gradient} of $u$, which 
is minimal both normwise
 and pointwise (a.e.\@) among all $X$-weak upper gradients  in $X$ of $u$,
 and is uniquely given
up to equality a.e.
The minimal $X$-weak  upper gradient of $u$
will be denoted by $g_u$.

The functional $\|\cdot\|_\NX$ is a quasiseminorm on $\NX$ and a
quasinorm on $\NtX \coloneq \NX/\mathord\sim$, where the equivalence
relation $u\sim v$ is given by $\|u-v\|_\NX = 0$. The modulus of
concavity for $\NX$ (i.e.\ the constant in the triangle inequality
for $\|\cdot\|_{\NX}$) is
equal to $\cconc$, i.e.\ it coincides with the modulus of concavity for $X$. 
Furthermore, the
Newtonian space $\NtX$ is complete and thus a quasi-Banach space,
see Mal\'y~\cite[Theorem~7.1]{MalL1}.

\begin{deff}
\label{df:capacity}
The \emph{\textup{(}Sobolev\/\textup{)} $X$-capacity} of a set
$E\subset \P$ is defined as
\[
    C_X^\P(E) =  \CX(E) = \inf\{ \|u\|_{\NX}: u\ge 1 \mbox{ on }E\}.
\]
We say that a property of points in $\P$ holds
\emph{$\CX$-quasieverywhere \textup{(}$\CX$-q.e.\textup{)}} if the set of exceptional
points has $X$-capacity zero.

The capacity $\CX$ is  a \emph{$c$-quasiouter capacity}, with $c \ge 1$, if
\[
    \inf_{\substack{G\text{ open} \\G \supset E  }} \CX(G)  \le c \CX(E)
    \quad \text{for every set } E \subset \P.
\]
The capacity $\CX$ is a \emph{quasiouter capacity} 
if it is a $c$-quasiouter capacity for some $c$,
and it is an \emph{outer capacity} if it is a $1$-quasiouter capacity.

We also say that $\CX$ is an \emph{outer capacity 
for sets of zero capacity}
if  whenever $\CX(E)=0$ and $\eps>0$,  there is an open set $G \supset E$
with $\CX(G) < \eps$.
\end{deff}

In the theory of $L^p$-spaces and more generally 
quasi-Banach function lattices, it is the sets of
measure zero that are negligible. When working with first-order
analysis, it is instead the Sobolev capacity that provides the needed finer
distinction of small sets.
Namely, 
$\|u\|_\NX = 0$
if and only if $u=0$ $\CX$\mbox{-}q.e., by Mal\'y~\cite[Proposition~6.15]{MalL1}.
Hence, the natural equivalence
classes in $\NtX$ as defined above are given by equality $\CX$-q.e.
Moreover, if $u,v \in \NX$ and $u=v$ a.e., then $u=v$ $\CX$-q.e.,
by \cite[Corollary~6.14]{MalL1}.
Despite the dependence on $X$, we will simply write
\emph{capacity} and \emph{q.e.\@} whenever there is no risk of
confusion of the base function space.
Also the dependence on $\P=(\P,d,\mu)$ is implicit 
e.g.\ in $\NX$ and $\CX$.

It is easy to see that $\CX(\emptyset)=0$ and $\CX(E_1) \le \CX(E_2)$
whenever $E_1 \subset E_2$. 
Moreover, 
$\CX(E) = 0$ implies that $\mu(E) = 0$. 
Mal\'y~\cite[Theorem~3.4]{MalL1} showed that
$\CX$ is countably $\cconc$-quasisubadditive, i.e.
\begin{equation} \label{eq-CX-quasi-subadd}
  \CX \biggl( \bigcup_{j=1}^\infty E_j \biggr) \le \sum_{j=1}^\infty \cconc^j \CX(E_j).
\end{equation}
In particular, $\CX$ is countably subadditive if $\cconc=1$.

Given a measurable subset $E \subset \P$, we can restrict the metric and
measure  to $E$ and define the function lattice $X(E)$ 
by restricting functions in $X$ to $E$ and
by setting
\[
\|u\|_{X(E)} = \|\ut\|_X, \quad \text{where }
  \ut(x) =
	\begin{cases}
	  u(x), & \text{if } x\in E, \\
		0, & \text{otherwise.}
	\end{cases}
\]
The space $\NX(E)$ 
and the capacity $C_X^E:=C_{X(E)}$ are then
defined by considering $E$ as
a metric space in its own right, with 
$X(E)$ defined as above and upper gradients taken within $E$.

\begin{remark} \label{rmk-Lp}
When $X=L^p$, $1 \le p \le \infty$, we abbreviate
the notation and write $\Np=N^1L^p$ and 
$\Cinfty=C_{L^\infty}$.
Note that the capacity denoted $\Cp$, with $1<p <\infty$,
in e.g.\ 
\cite{BBbook} and \cite{HKSTbook},
is comparable (but not equal) to the $p$th power of the capacity 
$C_{L^p}$ considered here.
More precisely,
\[
    2^{1-p} C_{L^p}(E)^p \le \Cp(E) \le C_{L^p}(E)^p.
\]
For most purposes in this paper this does not play any role: 
The notions of zero sets, (weak) quasicontinuity and 
quasiouter capacity  are  the same for both capacities.
However, we do not know if the property of being an 
outer capacity is equivalent for $\Cp$ and $C_{L^p}$ 
when $1<p<\infty$.
For this reason, we keep the notation $C_{L^p}$ for the
capacity defined in Definition~\ref{df:capacity}.
When $p=1$, we have $C_1=C_{L^1}$.

Nevertheless,  (the proof of) Corollary~1.3 in 
Bj\"orn--Bj\"orn--Shan\-mu\-ga\-lin\-gam~\cite{BBS5}
(or \cite[Theorem~5.31]{BBbook}) shows that
\ref{b-qcont} implies that $\Cp$ is an outer capacity, which trivially
implies \ref{b-qouter}.
\end{remark}

\begin{deff}
A function $u: \P \to \eR$ is \emph{weakly quasicontinuous}
if for every $\eps>0$ there is a set $E \subset \P$ with
$\CX(E) < \eps$ such that the restriction $u\vert_{\P \setminus E}$ is
continuous.
If the set
$E$ can be chosen open for every $\eps>0$, then $u$ is
\emph{quasicontinuous}.
\end{deff}

Here and elsewhere, continuous functions are real-valued,
while quasicontinuous and other functions may be $\eR$-valued.

The following property is fundamental in Theorem~\ref{thm-VC}
and Corollary~\ref{cor-qcont}.
At the same time, 
all counterexamples in Sections~\ref{sect-ex-Loo}--\ref{sect-Sierpinski}
exploit the fact that it fails.

\begin{deff}   \label{def-VC}
$X$ has the \emph{Vitali--Carath\'eodory property} (VC) if
\begin{equation}
  \label{eq:VitaliCarath}
	\| u \|_X = \inf \{ \|v\|_X: v\ge |u| \mbox{  and $v$ is lower semicontinuous on $\P$}\}
\end{equation}
for every measurable function $u: \P \to \eR$.
\end{deff}

If $X$ is a rearrangement-invariant space whose 
fundamental function approaches zero at the origin, then it
supports the Vitali--Carath\'eodory property \eqref{eq:VitaliCarath} 
by Mal\'y~\cite[Proposition~2.1]{MalL4}. 
Conversely, if there is a point $z \in \P$ with $\mu(\{z\})=0$, 
then \eqref{eq:VitaliCarath} applied to the function 
$\chi_{\{z\}} \in X$ yields that the 
fundamental function of $X$ approaches zero at the origin.
Such a condition is equivalent to the fact that the
rearrangement-invariant space contains some
essentially unbounded functions.

\section{The proofs of Theorems~\ref{thm-qcont-char}
and~\ref{thm-qcont-char-cont-dense}}
\label{sect-1.2-1.4}

In order to prove 
Theorem~\ref{thm-qcont-char}
we will need the following lemma, which says 
that the quasinorm $\|\cdot\|_{\NX}$ is continuous.
This is obvious if $\|\cdot\|_X$ is a norm
(and thus also $\|\cdot\|_{\NX}$ is a norm).
In the general case it uses
our standing assumption that $\|\cdot\|_X$ is continuous,
see \eqref{eq-cont-quasinorm}.

\begin{lem} \label{lem-cont-NX-norm}
If $\|u_k - u\|_{\NX} \to 0$,
then $\|u_k\|_{\NX} \to \|u\|_{\NX}$
as $k \to \infty$.
\end{lem}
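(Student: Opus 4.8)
The plan is to split the Newtonian quasinorm as $\|u\|_{\NX} = \|u\|_X + \|g_u\|_X$, where $g_u$ is the minimal $X$-weak upper gradient (whose existence and pointwise minimality among all $X$-weak upper gradients of $u$ is guaranteed by Mal\'y~\cite[Theorem~4.6]{MalL2}), and to handle the two summands separately. Since $\|\cdot\|_X \le \|\cdot\|_{\NX}$ trivially, the hypothesis $\|u_k-u\|_{\NX}\to0$ gives $\|u_k-u\|_X\to0$, and the standing continuity assumption~\eqref{eq-cont-quasinorm} on $\|\cdot\|_X$ immediately yields $\|u_k\|_X\to\|u\|_X$. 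It therefore remains to prove that the gradient parts converge, i.e.\ that $\|g_{u_k}\|_X\to\|g_u\|_X$.

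The key is a pointwise estimate on the minimal weak upper gradients. Writing $h_k\coloneq g_{u_k-u}$ for the minimal $X$-weak upper gradient of $u_k-u$, the sum $g_u+h_k$ is an $X$-weak upper gradient of $u_k=u+(u_k-u)$, so pointwise minimality gives $g_{u_k}\le g_u+h_k$ a.e.; symmetrically, using $g_{u-u_k}=g_{u_k-u}=h_k$, one obtains $g_u\le g_{u_k}+h_k$ a.e. Combining these two inequalities yields
\[
  |g_{u_k}-g_u|\le h_k \quad\text{a.e.}
\]
By the lattice property~\ref{df:BFL.latticeprop} this forces $\|g_{u_k}-g_u\|_X\le\|h_k\|_X$. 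Since $\|h_k\|_X=\inf_g\|g\|_X\le\|u_k-u\|_{\NX}\to0$, we conclude that $g_{u_k}\to g_u$ in $X$.

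Finally, I would apply the continuity assumption~\eqref{eq-cont-quasinorm} a second time, now to the sequence $g_{u_k}$ converging to $g_u$ in $X$, to get $\|g_{u_k}\|_X\to\|g_u\|_X$. Adding this to $\|u_k\|_X\to\|u\|_X$ gives $\|u_k\|_{\NX}\to\|u\|_{\NX}$, as desired.

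The step I expect to be the crux is the pointwise comparison $|g_{u_k}-g_u|\le g_{u_k-u}$ a.e. The mere quasi-triangle inequality for $\|\cdot\|_{\NX}$ only controls $\|g_{u_k}\|_X$ up to the factor $\cconc$ and is far too weak to yield continuity of a quasiseminorm. It is precisely this sharper a.e.\ estimate that lets the gradient part inherit continuity directly from the hypothesis~\eqref{eq-cont-quasinorm} on $\|\cdot\|_X$, so that no further appeal to the modulus of concavity is needed.
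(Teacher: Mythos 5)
Your proposal is correct and follows essentially the same route as the paper's own proof: the decomposition $\|u\|_{\NX}=\|u\|_X+\|g_u\|_X$, the pointwise estimate $|g_{u_k}-g_u|\le g_{u_k-u}$ a.e.\ obtained from two applications of the subadditivity and pointwise minimality of minimal $X$-weak upper gradients, and two appeals to the standing continuity assumption~\eqref{eq-cont-quasinorm}. You have correctly identified the a.e.\ comparison of minimal weak upper gradients as the crux, exactly as in the paper.
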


\begin{proof}
It follows directly that
$\|u_k - u\|_{X} \to 0$,
and thus $\|u_k\|_{X} \to \|u\|_{X}$,
by our standing assumption that the quasinorm $\|\cdot\|_X$ is continuous,
see \eqref{eq-cont-quasinorm}.

Next, as $g_u \le g_v + g_{u-v}$ a.e.\ it follows that
$g_{u} - g_v \le g_{u-v}$ a.e.,
and thus also $g_v - g_u \le g_{v-u}=g_{u-v}$ a.e.\ so that
$|g_u -g_v| \le g_{u-v}$ a.e.
Applying this to our sequence we see that
\[
    \|g_{u_k} - g_u\|_{X} \le \|g_{u_k- u}\|_{X}
    \le \|u_k- u\|_{\NX} \to 0,
    \quad \text{as } k \to \infty.
\]
Hence by the continuity of $\|\cdot\|_X$ again,
$\|g_{u_k}\|_{X} \to \|g_u\|_{X}$,
and thus $\|u_k\|_{\NX} \to \|u\|_{\NX}$.
\end{proof}

\begin{proof}[Proof of Theorem~\ref{thm-qcont-char}]
We will first show all the ``positive'' implications.

\ref{b-qcont}$\imp$\ref{b-outer}
Let $E \subset \P$ and $0<\eps<1$.
If $\CX(E)=\infty$, the outer capacity property is trivial, so
we assume that $\CX(E)<\infty$.
Then there is $u \in \NX$ with $u \ge \chi_E$ and
\[
\|u\|_{\NX} < \CX(E)+\eps.
\]
By assumption, $u$ is quasicontinuous and hence
there is a sequence of open sets 
$\{V_k\}_{k=1}^\infty$ with $\CX(V_k) < \frac{1}{k}$ such that
the restrictions $u|_{\P \setm V_k}$ are continuous.
Thus, there are open sets $U_k \subset \P$ such that
\[
          U_k \setm V_k = \{x : u(x) > 1-\eps\} \setm V_k \supset E \setm V_k.
\]
We can also find $v_k \ge \chi_{V_k}$
with $\|v_k\|_{\NX} < \frac{1}{k}$.
Let
\[
          w_k= \frac{u}{1-\eps} + v_k.
\]
Then $w_k \ge 1$ on $(U_k \setm V_k) \cup V_k = U_k \cup V_k$,
an open set containing $E$.
Since 
\[
\biggl\| w_k - \frac{u}{1-\eps}\biggr\|_{\NX} 
= \| v_k\|_{\NX} \to 0 \quad \text{as }k\to \infty,
\]
the continuity of the (quasi)norm $\|\cdot\|_{\NX}$,
given by Lemma~\ref{lem-cont-NX-norm},
implies that 
\[
\|w_k\|_{\NX} \to  \frac{\|u\|_{\NX}}{1-\eps}.
\]
In particular, we can find $k\ge 1$ such that 
\[
  \CX(U_{k} \cup V_{k})  \le \|w_{k}\|_{\NX} < \frac{\|u\|_{\NX}}{1-\eps} + \eps < \frac{\CX(E) + \eps}{1-\eps} + \eps.
\]
Letting $\eps \to 0$ shows that $\CX$ is an outer capacity.

\ref{b-qcont}\imp\ref{b-repr} and
\ref{b-outer}$\imp$\ref{b-qouter} 
These implications are trivial.

\ref{b-qouter}$\imp$\ref{b-weak-qcont}
Let  $u:\P \to \eR$ be weakly quasicontinuous
and $\eps >0$.
Then there is a set $E \subset \P$ such that $u|_{\P \setm E}$ is
continuous, and $\CX(E)<\eps/c$, where $c$ is 
the quasiouter constant of $C_X$.
By \ref{b-qouter}, there is an open set $G \supset E$ such
that $\CX(G) < \eps$. 
As $u|_{\P \setm G}$ is
continuous and $\eps>0$ was arbitrary, 
it follows that $u$ is quasicontinuous.

\ref{b-weak-qcont}$\imp$\ref{b-outer-zero}
Let $E$ be a set with $\CX(E)=0$ and $\eps >0$.
Then $u:=\chi_E$ is weakly quasicontinuous.
By assumption, $u$ is quasicontinuous,
so there is an open set $G'$ such that $\CX(G') < \eps/\cconc$
and $u|_{\P \setm G'}$ is continuous.
For each $x \in E \setm G'$, $u|_{\P \setm G'}$ is continuous at $x$
and $u(x)=1$, so there is an open set $G_x \ni x$ such that
$G_x  \setm G' \subset E$,
i.e.\ $G_x \subset G' \cup E$.
Let
\[
    G=G' \cup \bigcup_{x \in E \setm G'} G_x=G' \cup E,
\]
which is open.
Moreover, by \eqref{eq-CX-quasi-subadd},
\[
    \CX(G) \le \cconc \CX(G') + \cconc^2 \CX(E) < \eps.
\]
Since $\eps>0$ was arbitrary, \ref{b-outer-zero} holds.

\ref{b-repr}\imp\ref{b-wqcont}
Let $u \in \NX$ and $\eps>0$. 
Then there is a quasicontinuous function $v$ such that $v=u$ q.e.,
and thus
an open set $G$ such that $\CX(G)<\eps/\cconc$ and
$v|_{\P \setm G}$ is continuous.
Let $E=\{x:v(x) \ne u(x)\}$.
Then $\CX(E)=0$,
$u|_{\P \setm (G \cup E)}$ is continuous
and 
by \eqref{eq-CX-quasi-subadd},
\[
    \CX(G \cup E) \le \cconc \CX(G) + \cconc^2 \CX(E) < \eps.
\]
Hence  $u$ is weakly quasicontinuous.

(\ref{b-weak-qcont} and  \ref{b-wqcont})\imp\ref{b-qcont} 
This is obvious.

(\ref{b-outer-zero}  and   \ref{b-repr})\imp\ref{b-qcont}.
Let $u \in \NX$ and $\eps>0$.
By  \ref{b-repr},
$u$ has a quasicontinuous representative $v$ such that
$v=u$ q.e.
In particular, there is an open set $G$
such that $\CX(G)<\eps$ and
$v|_{\P \setm G}$ is continuous.
Moreover,
by \ref{b-outer-zero}, there is an open set $G'$
such that $\CX(G')<\eps$ and
$u=v$ in $\P \setm G'$.
It follows from \eqref{eq-CX-quasi-subadd} that $\CX(G \cup G')<2\cconc^2\eps$
and that $u|_{\P \setm (G \cup G')}$ is continuous, i.e.\
$u$ is quasicontinuous.

It remains to discuss the negated implications.

Example~\ref{ex-von-Koch} 
shows that all properties \ref{b-qcont}--\ref{b-wqcont} can fail.

Each of Examples~\ref{ex-2^{-n}},
\ref{ex-int-Loo}
and~\ref{ex-top-sine} shows that
\ref{b-wqcont}\negimp\ref{b-repr}.

Each of Examples~\ref{ex-union-new} 
and~\ref{ex-top-sine+int}
shows that 
\ref{b-outer}\negimp\ref{b-qcont}
and  \ref{b-outer}\negimp\ref{b-wqcont}.

Example~\ref{ex-union-new-b} 
shows that 
\ref{b-qouter}\negimp\ref{b-outer}

Example~\ref{ex-6.13} 
shows that
\ref{b-weak-qcont}\negimp\ref{b-qouter}.

Example~\ref{ex-int-not-Loo}(a) 
shows that
\ref{b-repr}\negimp\ref{b-outer-zero}.
\end{proof}

\begin{proof}[Proof of Theorem~\ref{thm-qcont-char-cont-dense}.]
That \ref{b-repr} and~\ref{b-wqcont} 
hold was shown by Mal\'y~\cite[Proposition~4.5]{MalL4}.
That \ref{b-outer-zero}$\imp$\ref{b-qcont} now follows from
\eqref{eq-and}.
Thus \ref{b-qcont}--\ref{b-outer-zero} are equivalent by 
Theorem~\ref{thm-qcont-char}.

That \ref{b-qcont}--\ref{b-outer-zero} can fail follows 
from Example~\ref{ex-int-not-Loo}(a).
\end{proof}

\section{The proof of Theorem~\ref{thm-VC}}
\label{sect-1.6}

In this and the next section we will use balls.
For 
\[
B=B(x,r):=\{y \in X : d(x,y)<r\}
\] 
we use the notation $\la B=B(x, \la r)$
and $\la \clB=\itoverline{B(x, \la r)}$. 
In metric spaces
it can happen that balls with different centres or
radii denote the same set.
We will, however,
make the convention that a ball $B$ comes with a predetermined
centre and radius.
Unless said otherwise, balls considered in this paper
are assumed to be open.

We will need the following two lemmas when proving Theorem~\ref{thm-VC}.

\begin{lem}
\label{lem:int-rho-lsc}
Assume that $\P$ is proper and let $F \subset \P$ be closed. 
Let $\rho: \P \to [0, \infty]$ be lower semicontinuous and such that 
$\rho \ge c > 0$ in $\P \setminus F$. Let
\[
  u(x) = \min\biggl\{ 1, \inf_\gamma \int_\gamma \rho\,ds \biggr\},
\]
where the infimum is taken over all rectifiable 
curves\/ \textup(including constant ones\/\textup) connecting $x \in \P$ to $F$. 
Then, $u$ is lower semicontinuous, $u = 0$ on $F$, 
and $\rho \chi_{\P \setminus F}$ is an upper gradient of $u$.
\end{lem}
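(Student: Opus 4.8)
The plan is to write $v(x)=\inf_\gamma\int_\gamma\rho\,ds$ for the infimum of the $\rho$-length over rectifiable curves (including constant ones) joining $x$ to $F$, so that $u=\min\{1,v\}$. Two facts are then immediate. For $x\in F$ the constant curve at $x$ joins $x$ to $F$ with zero $\rho$-length, whence $v(x)=0$ and $u(x)=0$; this is exactly the assertion $u=0$ on $F$. Moreover, concatenating any rectifiable curve $\sigma$ from $x$ to $y$ with a near-optimal curve from $y$ to $F$ (and reversing $\sigma$ for the opposite bound) gives the triangle inequality $v(x)\le v(y)+\int_\sigma\rho\,ds$ in $[0,\infty]$. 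Since $t\mapsto\min\{1,t\}$ is nondecreasing and satisfies $\min\{1,s+I\}\le\min\{1,s\}+I$ for $s,I\in[0,\infty]$, this descends to $u(x)\le u(y)+\int_\sigma\rho\,ds$.

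For the upper gradient property I would show that $\rho\chi_{\P\setminus F}$ works by distinguishing two cases for a nonconstant rectifiable curve $\gamma\colon[0,l]\to\P$ with endpoints $a=\gamma(0)$ and $b=\gamma(l)$, exploiting that $u$ vanishes on $F$. If $\gamma$ never meets $F$, then $\rho=\rho\chi_{\P\setminus F}$ along $\gamma$, and the truncated triangle inequality with $\sigma=\gamma$ gives $|u(a)-u(b)|\le\int_\gamma\rho\,ds=\int_\gamma\rho\chi_{\P\setminus F}\,ds$. If $\gamma$ meets $F$, set $t_1=\inf\{s:\gamma(s)\in F\}$ and $t_2=\sup\{s:\gamma(s)\in F\}$, both attained because $F$ is closed, with $t_1\le t_2$. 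The initial piece $\gamma|_{[0,t_1]}$ joins $a$ to $F$ and lies in $\P\setminus F$ on $[0,t_1)$, so $u(a)\le v(a)\le\int_0^{t_1}\rho\,ds=\int_0^{t_1}\rho\chi_{\P\setminus F}\,ds$, and symmetrically $u(b)\le\int_{t_2}^l\rho\chi_{\P\setminus F}\,ds$. As $[0,t_1)$ and $(t_2,l]$ are disjoint and $u\ge0$, we conclude $|u(a)-u(b)|\le u(a)+u(b)\le\int_\gamma\rho\chi_{\P\setminus F}\,ds$. (This also covers $a,b\in F$, where $t_1=0$, $t_2=l$ and both terms vanish.)

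The lower semicontinuity of $u$ is the core of the argument, and this is where properness enters. It suffices to prove that $v$ is lower semicontinuous, since $u=\min\{1,v\}$ and $\{u>\lambda\}=\{v>\lambda\}$ for $0\le\lambda<1$. I would show that $\{v\le\lambda\}$ is closed: given $x_j\to x_0$ with $v(x_j)\le\lambda$, pick rectifiable curves $\gamma_j$ from $x_j$ to $F$ with $\int_{\gamma_j}\rho\,ds\le\lambda+1/j$ and truncate each at its first hitting time of $F$, obtaining $\gamma_j\colon[0,\tau_j]\to\P$ that lies in $\P\setminus F$ except at its terminal point and has no larger $\rho$-length. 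Because $\rho\ge c>0$ on $\P\setminus F$, arc-length parametrization yields $c\,\tau_j\le\int_{\gamma_j}\rho\,ds\le\lambda+1$, so the lengths are uniformly bounded; as $x_j\to x_0$ the curves lie in a fixed compact ball $\clB(x_0,R)$ by properness. Extending each $\gamma_j$ to the common interval $[0,M]$, $M=(\lambda+1)/c$, by keeping it constant after $\tau_j$ turns them into uniformly $1$-Lipschitz maps into a compact set, so Arzel\`a--Ascoli furnishes a subsequence converging uniformly to a $1$-Lipschitz curve $\gamma$ with $\gamma(0)=x_0$ and $\gamma(M)\in F$ (the latter since $F$ is closed). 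Hence $\gamma$ joins $x_0$ to $F$ and $v(x_0)\le\int_\gamma\rho\,ds$.

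The single nontrivial analytic input---and the step I expect to be the main obstacle---is the lower semicontinuity of the $\rho$-length functional, $\int_\gamma\rho\,ds\le\liminf_j\int_{\gamma_j}\rho\,ds$, for uniformly convergent $1$-Lipschitz curves and lower semicontinuous $\rho\ge0$. Granting it, $v(x_0)\le\liminf_j(\lambda+1/j)=\lambda$, so $\{v\le\lambda\}$ is closed and $u$ is lower semicontinuous, finishing the proof. I would obtain this semicontinuity by writing $\rho$ as an increasing limit of bounded Lipschitz functions $\rho_k\uparrow\rho$ (e.g.\ $\rho_k=\min\{k,\inf_y(\rho(y)+k\,d(\cdot,y))\}$), establishing the inequality for each continuous $\rho_k$ via lower sums over fine partitions together with the classical lower semicontinuity of curve length, and then letting $k\to\infty$ by monotone convergence; alternatively one may cite the corresponding lemma from the metric Sobolev literature.
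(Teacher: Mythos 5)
Your proof is correct, but it takes a genuinely different route from the paper's. The paper outsources both nontrivial ingredients to Bj\"orn--Bj\"orn--Shanmugalingam~\cite{BBS5}: the upper gradient property is simply cited from \cite[Lemmas~3.1 and~3.2]{BBS5}, and lower semicontinuity is obtained by an exhaustion argument---setting $F_k = F \cup (\P \setminus kB)$ for a fixed ball $B$, invoking \cite[Lemma~3.3]{BBS5} (which requires $\P \setminus F_k$ to be bounded) to get that each $u_k$ is lower semicontinuous, and then showing $u = \sup_{k} u_k$; the key to the last step is exactly the observation you exploit, namely that $\rho \ge c$ outside $F$ together with the truncation at $1$ lets one discard all competitor curves of length exceeding $1/c$, so that locally $u$ agrees with some $u_{k_x}$. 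You instead prove everything from scratch: the first/last hitting-time decomposition for the upper gradient (which correctly handles all cases, including endpoints in $F$, since $0\le u\le 1$ gives $|u(a)-u(b)|\le u(a)+u(b)$), and a direct Arzel\`a--Ascoli compactness argument in which the uniform length bound $\tau_j \le (\lambda+1)/c$ makes properness usable without any boundedness of $\P \setminus F$, so no exhaustion is needed. What each buys: the paper's version is shorter given the literature and isolates the one new point (passing from bounded to unbounded complement), while yours is self-contained except for the lower semicontinuity of $\gamma \mapsto \int_\gamma \rho\,ds$ under uniform convergence of curves of uniformly bounded length---and that lemma is standard (it is also what underlies the cited \cite[Lemma~3.3]{BBS5}), true as stated, and your sketch of it (inf-convolutions $\rho_k \uparrow \rho$, lower sums plus lower semicontinuity of length for each continuous $\rho_k$, then monotone convergence) is the standard proof. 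One point worth making explicit in a final write-up: after extending $\gamma_j$ to be constant on $[\tau_j, M]$, the line integral is unchanged because the constant tail has zero length; this matters since $\rho$ may be infinite at the terminal point $\gamma_j(\tau_j) \in F$, so comparing parameter integrals $\int_0^M \rho(\gamma_j(t))\,dt$ instead would not work.
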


Note that we do not assume that $F$ or $\P \setminus F$ is bounded, 
which distinguishes  this from 
Lemma~3.3 in Bj\"orn--Bj\"orn--Shan\-mu\-ga\-lin\-gam~\cite{BBS5}.

\begin{proof}
That $u = 0$ on $F$ is obvious.
It was proven in~\cite[Lemmas~3.1 and~3.2]{BBS5} (or \cite[Lemma~5.25]{BBbook}) 
that $\rho \chi_{\P \setminus F}$ is an upper gradient of $u$. 
What remains to be shown is that 
$u$ is lower semicontinuous.

Let $B \subset \P$ be an arbitrary open ball. For $k \in \Zp$,
let $F_k = F \cup (\P \setminus k B)$ and define
$u_k(x) = \min\bigl\{ 1, \inf_\gamma \int_\gamma \rho\,ds \bigr\}$,
where the infimum is taken over all 
rectifiable curves (including constant ones) connecting $x\in\P$ to $F_k$. 
Then, each function $u_k$ is lower semicontinuous by \cite[Lemma~3.3]{BBS5} 
(or \cite[Lemma~5.26]{BBbook}) as $\P \setminus F_k$ is bounded. 
The proof will be complete when we show that $u = \sup_{k\ge 1} u_k$,  as the pointwise supremum 
of lower semicontinuous functions is lower semicontinuous.

Observe that we can modify the definition of $u(x)$ by considering only 
rectifiable curves $\gamma: [0, l_\gamma] \to \P$ 
such that $\gamma\bigl([0, l_\gamma)\bigr) \subset \P \setminus F$ in the infimum. 
Moreover, we may restrict the length 
of $\gamma$ in the infimum to be at most $\frac{1}{c}$ 
as the value of the path integral would be greater than or equal to $1$ otherwise. 
For each $x \in \P$, we can then find $k_x \ge 1$ such that 
$B(x, \frac1c) \subset k_x B$ and hence $u(x) = u_{k_x}(x)$. 
By definition, $u_k(x) \le u(x)$ for every $x\in \P$ and $k \in \Zp$. Therefore, $u = \sup_{k\ge 1} u_k$.
\end{proof}

\begin{lem} \label{lem-lattice}
If $g_k$ is an upper gradient of $u_k$, $k=1,\dots,n$,
then
$g=\max_k g_k$ is an upper gradient of $u=\min_k u_k$ and of $v=\max_k u_k$.
\end{lem}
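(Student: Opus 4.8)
The plan is to reduce the statement to the defining inequality for upper gradients, combined with the elementary fact that forming pointwise minima or maxima is nonexpansive in the sup-sense. First I would note that $g=\max_k g_k$ is indeed a Borel function $\P\to[0,\infty]$, being the maximum of finitely many Borel functions, so it is eligible to be an upper gradient at all.

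Next I would isolate the key elementary inequality: for all real numbers $s_1,\dots,s_n$ and $t_1,\dots,t_n$,
\[
\Bigl|\min_k s_k - \min_k t_k\Bigr| \le \max_k |s_k - t_k|
\quad\text{and}\quad
\Bigl|\max_k s_k - \max_k t_k\Bigr| \le \max_k |s_k - t_k|.
\]
To prove the first, choose $j$ with $\min_k s_k = s_j$; then $\min_k s_k - \min_k t_k \le s_j - t_j \le \max_k|s_k-t_k|$, and the reverse inequality follows by symmetry. The case of $\max$ is analogous.

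Then I would fix a nonconstant rectifiable curve $\gamma:[0,l_\gamma]\to\P$ and abbreviate $a=\gamma(0)$ and $b=\gamma(l_\gamma)$. Applying the elementary inequality with $s_k=u_k(a)$ and $t_k=u_k(b)$ yields $|u(a)-u(b)| \le \max_k |u_k(a)-u_k(b)|$. Since each $g_k$ is an upper gradient of $u_k$ and $g_k\le g$ pointwise, we have for every $k$
\[
|u_k(a)-u_k(b)| \le \int_\gamma g_k\,ds \le \int_\gamma g\,ds,
\]
so the maximum over $k$ is also bounded by $\int_\gamma g\,ds$, whence $|u(a)-u(b)|\le\int_\gamma g\,ds$. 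The identical computation with $\max$ replacing $\min$ handles $v$, so $g$ is an upper gradient of both $u$ and $v$.

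The only real subtlety, and the step I would treat most carefully, is the extended-real convention $|(\pm\infty)-(\pm\infty)|=\infty$, since the elementary inequality above is phrased for genuine real numbers. Here the plan is a short case split: if some $u_k$ takes an infinite value at $a$ or at $b$, then $|u_k(a)-u_k(b)|=\infty$, so the upper gradient inequality for that single $u_k$ already forces $\int_\gamma g_k\,ds=\infty$, hence $\int_\gamma g\,ds=\infty$ and both target inequalities hold trivially; in the remaining case all the values $u_k(a),u_k(b)$ are finite, so the elementary inequality applies verbatim. This dichotomy is routine and is the only point at which the convention intervenes.
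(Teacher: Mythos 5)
Your proof is correct and follows essentially the same route as the paper's, which consists of exactly the chain $|u(\gamma(0))-u(\gamma(l_\gamma))| \le \max_k |u_k(\gamma(0))-u_k(\gamma(l_\gamma))| \le \max_k \int_\gamma g_k\,ds \le \int_\gamma g\,ds$ applied to an arbitrary nonconstant rectifiable curve, with the case of $v=\max_k u_k$ handled analogously. Your explicit case split for the convention $|(\pm\infty)-(\pm\infty)|=\infty$ is a sound extra precaution that the paper's proof leaves implicit.
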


For the reader's convenience,
we provide a simple proof.

\begin{proof}
Let $\ga :  [0,l_\ga] \to \P$ be a nonconstant rectifiable curve.
Then 
\[
   |u(\ga(0))-u(\ga(l_\ga))| \le \max_k |u_k(\ga(0))-u_k(\ga(l_\ga))| 
                           \le \max_k \int_\ga g_k\,ds 
                         \le \int_\ga g\,ds,
\]
and thus $g$ is an upper gradient  of $u$.
The proof for $v$ is similar.
\end{proof}

\begin{proof}[Proof of Theorem~\ref{thm-VC}\ref{VC-outer}]
Let $E \subset \P$. 
If $C_X(E) = \infty$, then there is nothing to prove.
We therefore assume that $\CX(E)<\infty$.

Let $\eps>0$. 
Then, there is $u \in \NX$ with an upper gradient $g \in X$ 
such that $\chi_E \le u \le 1$ and $\|u\|_{X} + \|g\|_X < \CX(E) + \eps$. By the Vitali--Carath\'eodory property, there are lower semicontinuous functions $v\ge u$ and $\rho \ge g$ such that $\|v \|_X + \|\rho\|_X < \CX(E) + \eps$. 
Furthermore, there is $0 <\delta <1$ such that 
\[
  (1 + \delta) (\|v\|_X + \| \rho\|_X) < \CX(E) + \eps.
\]
Define
\[
  w(x) = \bigl((1+\delta) v(x) - \delta\bigr)_{+}, \quad x \in \P,
\]
and set $W_0 = \{ x \in \P: w(x) > 0\}$. 
Then, $\| \chi_{W_0} \|_X \le \frac{2}{\delta} \|v\|_X < \infty$. 
By the continuity of the (quasi)norm of $X$,
see \eqref{eq-cont-quasinorm},
we can find $N \in \Zp$ large enough so that
\[
  \Bigl\| \frac{\chi_{W_0}}{N} \Bigr\|_X < \eps, 
\quad \Bigl\| \rho + \frac{\chi_{W_0}}{N} \Bigr\|_X < \| \rho\|_X + \eps 
\quad \text{and}\quad
  \Bigl\| w + \frac{\chi_{W_0}}{N} \Bigr\|_X < \| w\|_X + \eps.
\]
For each $k=1,2,\dots,N-1$, let $W_k = \{ x \in \P: w(x) > \frac{k}{N}\}$.
These sets are open since $w$ is lower semicontinuous.  
Note also that
\begin{equation}
  \label{eq:est_outside_Wk}
  u(y) \le v(y) \le \frac{\frac{k}{N} + \delta}{1+\delta} 
\quad \text{whenever } y \in \P \setminus W_k,\ k = 0, \dots, N-1.
\end{equation}
Next, define  
\[
  u_k(x) = \min \biggl\{1, \frac{k}{N} + \inf_\ga \int_\ga \biggl((1+\delta)\rho 
+ \frac{1}{N}\biggr)\,ds \biggr\}, \quad x \in \P,\ k = 0, \dots, N-1,
\]
where the infimum is taken over all 
rectifiable  curves (including constant ones)
that connect $x$ to the closed set $\P \setminus W_k$. 
Then, $u_k \equiv \frac{k}{N}$ in $\P \setminus W_k$, $k = 0, \dots, N-1$.
Moreover, if $\gamma$ is a rectifiable 
curve with endpoints $x \in E$ and $y \in \P \setminus W_k$, 
then by \eqref{eq:est_outside_Wk},
\begin{align*}
  \int_\gamma \biggl((1+\delta)\rho + \frac{1}{N}\biggr)\,ds 
  & \ge (1+\delta) \int_\gamma \rho\,ds \ge (1+\delta) (u(x) - u(y)) \\
  & \ge (1+\delta) \biggl( 1 - \frac{\frac{k}{N} + \delta}{1+\delta} \biggr) 
= 1 - \frac{k}{N}.
\end{align*}
Therefore, $u_k(x) = 1$ for every $x \in E$ and every $k=0,1,\dots, N-1$. 
All the functions $u_k$ are lower semicontinuous and 
$\gt:= (1+\delta)\rho + \frac{\chi_{W_0}}{N}$ 
is an upper gradient of each $u_k$ by Lemma~\ref{lem:int-rho-lsc}.
Let
\[
  \ut = \min_{0 \le k \le N-1}  u_k
 \quad \text{and} \quad
G = \biggl\{x \in \P: \ut(x) > \frac{1}{1+\eps}\biggr\}.
\]
Then, $\ut$ is lower semicontinuous, 
$\ut(x) = 1$ for all $x\in E$, and 
$\gt$  
is an upper gradient of $\ut$, by Lemma~\ref{lem-lattice}.
Moreover,  $G$ is open, $E \subset G$ and
(with $W_N = \emptyset$)
\begin{align*}
  C_X(G) & \le (1+\eps) \bigl(\|\ut\|_X + \|\gt\|_X \bigr) \\
  & \le (1 + \eps) \biggl( \biggl\| \sum_{k=1}^{N} \frac{k}{N} 
    \chi_{W_{k-1} \setminus W_{k}} \biggr\|_X 
    + (1+\delta) \biggl\| \rho + \frac{\chi_{W_0}}{N} \biggr\|_X \biggr) \\
  & \le (1 + \eps) \biggl( \biggl\| w + \frac{\chi_{W_0}}{N}\biggr\|_{X} 
+ (1+\delta) \biggl\| \rho + \frac{\chi_{W_0}}{N}\biggr\|_{X}\biggr) \\
  & \le (1 + \eps) \bigl( \| w\|_X + \eps + (1+\delta) (\| \rho\|_X + \eps)\bigr) \\
  & \le (1 + \eps) \bigl( (1+\delta)(\| v\|_X +\| \rho\|_X) + 3\eps)\bigr) \\
  & < (1 + \eps) (C_X(E) + 4\eps).
\end{align*}
Letting $\eps \to 0$ shows that $\CX$ is an outer capacity.
\end{proof}

\begin{proof}[Proof of Theorem~\ref{thm-VC}\ref{VC-qcont}]
Using separability and local compactness, we can cover $\P$ by balls
$\{B_j\}_{j=1}^\infty$ with radii $r_j \le 1$ so that the balls
$3 \itoverline{B}_j$ are compact, and thus proper.

Let $u$ be a weakly quasicontinuous function and let $\eps>0$ be arbitrary. 
For each $j \in \Zp$, there is a set $E_j \subset \P$ such that
$C_X(E_j)<(2\cconc)^{-j-1}\eps r_j$ and 
$u|_{\P\setm E_j}$ is continuous.
The Vitali--Carath\'eodory property is inherited by 
the function lattice $X(3\itoverline{B}_j)$, and thus
Theorem~\ref{thm-VC}\ref{VC-outer}
can be used
with 
$X(3\itoverline{B}_j)$.
Upon noting that 
\[
C^{3\itoverline{B}_j}_X(E_j\cap B_j) \le C_X(E_j) < (2\cconc)^{-j-1}\eps r_j,
\]
one can find open sets
$U_j\subset B_j$ such that $E_j\cap B_j \subset U_j$ and
\[
C^{3B_j}_X(U_j) \le
C^{3\itoverline{B}_j}_X(U_j) 
< (2\cconc)^{-j-1}\eps r_j.
\]
Hence, there exist $v_j\in\NX(3B_j)$ such that
\[
\chi_{U_j} \le v_j \le 1 \text{ in } 3B_j
\quad \text{and} \quad
\|v_j\|_{X(3B_j)} + \|g_{v_j}\|_{X(3B_j)} < (2\cconc)^{-j-1}\eps r_j,
\]
where $g_{v_j}$ is the $X$-weak upper gradient of $v_j$ within $3B_j$.
We extend $v_j$ and $g_{v_j}$ by $0$ outside $3B_j$.
Now, let $0\le\eta_j\le1$ be a $(1/r_j)$-Lipschitz cut-off function
vanishing outside
$2B_j$ and such that $\eta_j=1$ in $B_j$.
Putting $u_j=v_j\eta_j$, we have $u_j\in\NX$,
\[
  \chi_{U_j}\le u_j\le \chi_{2 B_j} 
	\quad \text{and}\quad
	g_{u_j} \le g_{v_j} \eta_j + v_j \frac{\chi_{2B_j \setm
            \itoverline{B}_j}}{r_j} \le g_{v_j} + \frac{v_j}{r_j},
\]
by the product rule for $X$-weak upper
gradients, see Mal\'y~\cite[Theorem~A.1]{MalL4}.
Thus,
\begin{align*}
C_X(U_j) &\le \|u_j\|_{X} + \|g_{u_j}\|_{X}
    \le \|v_j\|_{X} + \biggl\|g_{v_j}+\frac{v_j}{r_j}\biggr\|_{X} \\
    &\le \biggl( 1+\frac{\cconc}{r_j} \biggr) \|v_j\|_{X} + \cconc\|g_{v_j}\| _{X}
    < \frac{2\cconc}{r_j} (2\cconc)^{-j-1}\eps r_j = (2\cconc)^{-j}\eps.
\end{align*}
Let $U = \bigcup_{j=1}^\infty U_j \supset \bigcup_{j=1}^\infty E_j\cap B_j$.
It follows from \eqref{eq-CX-quasi-subadd} that
\[
  C_X(U)= C_X\biggl(\bigcup_{j=1}^\infty U_j\biggr)
 \le \sum_{j=1}^\infty \cconc^j C_X(U_j)
< \sum_{j=1}^\infty  2^{-j}\eps =  \eps.
\]
It remains to show that $u|_{\P\setm U}$ is continuous.
Let $x\in \P\setm U$ be arbitrary and find $B_j$ such that $x\in B_j$.
Then $x\in B_j\setm E_j$ and as $u|_{\P\setm E_j}$ is continuous,
the result follows.
\end{proof}

\section{Equivalent local properties}
\label{sect-local}

If $\Om$ is open,
we say  that $f \in \NXloc(\Om)$ if
for every $x \in \Om$ there is a ball $B_x\ni x$ such that
$f \in \NX(B_x)$.

\begin{thm} \label{thm-qcont-char-local}
Consider the  following statements\/\textup{:}
\begin{enumerate}
\renewcommand{\theenumi}{\textup{(\Alph{enumi}$'$)}}%
\item \label{b-Omega}
If $\Om \subset \P$ is open and $u \in \NXloc(\Om)$,
then $u$ is quasicontinuous.
\addtocounter{enumi}{2}
\item \label{b-weak-qcont-Om}
If $\Om \subset \P$ is open and $u:\Om \to \eR$ is weakly quasicontinuous, then
$u$ is quasicontinuous.
\addtocounter{enumi}{1}
\item \label{b-repr-local}
If $\Om \subset \P$ is open then
every $u \in \NXloc(\Om)$ has a quasicontinuous representative $v$, i.e.\
$v=u$ q.e.\ in $\Om$.
\item \label{b-w-Omega}
If $\Om \subset \P$ is open and $u \in \NXloc(\Om)$,
then $u$ is weakly quasicontinuous.
\end{enumerate}
Then
\[
\ref{b-Omega}\eqvnospace\ref{b-qcont}, \quad
\ref{b-weak-qcont-Om}\eqvnospace\ref{b-weak-qcont}, \quad
\ref{b-repr-local}\eqvnospace\ref{b-repr}
\quad \text{and} \quad
\ref{b-w-Omega}\eqvnospace\ref{b-wqcont}.
\]
\end{thm}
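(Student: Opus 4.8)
The plan is to treat each of the four equivalences as a trivial ``primed $\Rightarrow$ unprimed'' half together with a localization argument for the converse. The trivial halves are immediate: taking $\Om=\P$ (which is open) and using $\NX(\P)\subset\NXloc(\P)$ turns \ref{b-Omega}, \ref{b-repr-local} and \ref{b-w-Omega} into \ref{b-qcont}, \ref{b-repr} and \ref{b-wqcont}, while for \ref{b-weak-qcont-Om}$\Rightarrow$\ref{b-weak-qcont} one only notes that a weakly quasicontinuous $u\colon\P\to\eR$ is itself an admissible competitor. The engine for the converses is a lemma I would prove first, namely that \emph{quasicontinuity and weak quasicontinuity are local}: if $\{U_j\}_{j=1}^\infty$ is a countable open cover of $\Om$ and each $u|_{U_j}$ is (weakly) quasicontinuous, then so is $u|_\Om$. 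For $\eps>0$ choose exceptional sets $G_j$ with $\CX(G_j)<\eps/(2\cconc)^j$; by the countable quasi-subadditivity \eqref{eq-CX-quasi-subadd} their union has capacity $<\eps$, and continuity of $u$ off it is checked pointwise, since any point of $\Om$ lies in some $U_m$ and sequences converging to it eventually enter $U_m\setm G_m$. For quasicontinuity the $G_j$ may be taken open, hence so is their union.

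With this lemma, the implications \ref{b-qcont}$\Rightarrow$\ref{b-Omega}, \ref{b-weak-qcont}$\Rightarrow$\ref{b-weak-qcont-Om} and \ref{b-wqcont}$\Rightarrow$\ref{b-w-Omega} all run identically. I would cover $\Om$ by balls so that the halved balls $\tfrac12 B_j$ still cover $\Om$, $\overline{B_j}\subset\Om$, and (in the two Newtonian cases) $u\in\NX(B_j)$, and pick Lipschitz cut-offs $\eta_j$ with $\eta_j=1$ on $\tfrac12 B_j$ and $\eta_j=0$ off $B_j$. Then $\eta_j u$, extended by zero, lies in $\NX(\P)$ by the product rule in the cases of \ref{b-qcont} and \ref{b-wqcont}, and is weakly quasicontinuous on $\P$ in the case of \ref{b-weak-qcont-Om} (it is continuous off the exceptional set of $u$, and across $\partial B_j$ because $\eta_j\to0$ there while $u$ stays finite). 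Applying the global hypothesis to $\eta_j u$ and restricting to $\tfrac12 B_j$, where $\eta_j u=u$, shows that the \emph{same} function $u$ is (weakly) quasicontinuous on each $\tfrac12 B_j$; the locality lemma then upgrades this to $\Om$.

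The delicate case is \ref{b-repr}$\Rightarrow$\ref{b-repr-local}. Here the local outputs are \emph{distinct} representatives $v_j$ of $u$ on $\tfrac12 B_j$ that agree only q.e.\ on overlaps, and one cannot simply glue them: a q.e.\ modification of a quasicontinuous function need not be quasicontinuous unless the offending capacity-zero set can be enclosed in open sets of small capacity, i.e.\ unless \ref{b-outer-zero} holds, which \ref{b-repr} does not supply. I would therefore avoid gluing and instead make a \emph{single} application of \ref{b-repr}. The idea is to build a continuous multiplier $\phi\colon\P\to[0,\infty)$ with $\{\phi>0\}=\Om$ --- from a Lipschitz partition of unity subordinate to the cover, with rapidly decaying coefficients, times a factor forcing $\phi\to0$ at $\partial\Om$ --- so that $w:=\phi u$, extended by zero, lies in $\NX(\P)$. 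Then \ref{b-repr} furnishes a quasicontinuous $\tilde w=w$ q.e.\ on $\P$, and $v:=\tilde w/\phi$ is the desired representative: $v=u$ q.e.\ in $\Om$, and dividing a quasicontinuous function by a positive continuous one keeps it quasicontinuous.

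The hard part will be this multiplier construction. One must tune $\phi$ to be small enough on each piece of the cover that, by a Riesz--Fischer summation, $\|w\|_X<\infty$ and (via the product rule, with upper gradient controlled by $\phi g_u+u\,g_\phi$) an upper gradient of $w$ lies in $X$, while at the same time making $w$ decay fast enough toward $\partial\Om$ that the zero-extension is genuinely Newtonian on $\P$ --- that is, that the upper gradient inequality persists along curves crossing $\partial\Om$. Checking this boundary behavior is the crux; once $w\in\NX(\P)$ is secured, the remaining steps are routine applications of the locality lemma and the quasi-subadditivity of $\CX$.
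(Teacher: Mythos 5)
Three of your four equivalences follow the paper's proof almost verbatim: the trivial directions via $\Om=\P$, your locality lemma (which is exactly the paper's Lemma~\ref{lem-qcont-subset}, proved by the same $\eps/(2\cconc)^j$ budget and \eqref{eq-CX-quasi-subadd}), and the cut-off transfer for \ref{b-Omega}, \ref{b-weak-qcont-Om} and \ref{b-w-Omega} (the paper uses balls with $3B_j\subset\Om$ and justifies that the zero extension of $\eta_j u$ is globally Newtonian by the product rule together with the fact that $g_{\eta_j u}$ vanishes a.e.\ on the annulus $3B_j\setm 2B_j$, citing Mal\'y \cite[Theorem~A.1 and Corollary~A.4]{MalL4} --- you should make this zero-extension step explicit rather than attributing it to the product rule alone). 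One flaw in your weak-quasicontinuity transfer: you justify continuity of $\eta_j u$ across $\partial B_j$ ``because $u$ stays finite'', but a weakly quasicontinuous $u:\Om\to\eR$ may be $\pm\infty$ on a set of large capacity (e.g.\ $u\equiv+\infty$ is continuous as an $\eR$-valued map), and then $\eta_j u$ is an indeterminate $0\cdot\infty$ at the boundary. The repair is to run the argument for $\arctan u$, which is bounded and (weakly) quasicontinuous with the same exceptional sets, and compose back with the homeomorphism at the end; the paper's own proof glosses this same point.

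Where you genuinely diverge is \ref{b-repr}\imp\ref{b-repr-local}. The paper applies \ref{b-repr} \emph{countably many times}: it builds a Lipschitz partition of unity $\{\phi_j\}$ in $\Om$ with $\phi_j=0$ on $B_k$ for $j>k$, takes global quasicontinuous representatives $\ut_j$ of $u\phi_j\in\NX(\P)$ vanishing outside $\supp\phi_j$, and sets $\ut=\sum_j\ut_j$; on each $B_k$ this sum is finite, hence quasicontinuous there, and the locality lemma finishes. Your gluing worry is thus moot for the paper's route --- nothing is restricted and re-glued; each $\ut_j$ is quasicontinuous on all of $\P$ and the sum is locally finite. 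Your single-application alternative ($w=\phi u$ with continuous $\phi>0$ exactly on $\Om$, one representative $\tilde w$, then $v=\tilde w/\phi$) is viable, and it even bypasses the paper's lightly justified normalization ``we may assume $\ut_j\equiv0$ outside $\supp\phi_j$''; but as written you defer your self-identified crux, namely $w\in\NX(\P)$. That step in fact requires no analysis of curves crossing $\partial\Om$: define $w$ as the series $\sum_j c_j(\eta_j u)$ of zero extensions, each globally in $\NX(\P)$ by the annulus argument above; choose $c_j$ with $\sum_j\cconc^j c_j\|\eta_j u\|_{\NX}<\infty$, so that by the Riesz--Fischer property and completeness of the Newtonian space \cite[Theorem~7.1]{MalL1} the series converges in $\NX$, while it converges pointwise q.e.\ to $\phi u$ with $\phi=\sum_j c_j\eta_j$ (using $\CX(\{|u|=\infty\}\cap B_j)=0$); since a function equal q.e.\ to an $\NX$-function is itself in $\NX$ \cite[Proposition~6.15]{MalL1}, $w=\phi u\in\NX(\P)$. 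With that closed, your remaining steps (quotient by a positive continuous function preserves quasicontinuity on $\Om$, and $v=u$ q.e.\ there) are sound. Note, though, that this completion --- zero-extended pieces supported well inside $\Om$, summed by Riesz--Fischer --- is essentially the paper's partition-of-unity argument in disguise, so the two routes differ less than they first appear; what your version buys is a single invocation of \ref{b-repr} and no normalization of the representatives, at the price of the convergence bookkeeping above.
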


Here we consider (weak) quasicontinuity with respect to $\CX$.
Since $\CXOm \le \CX$ the statements 
corresponding to 
\ref{b-Omega}, \ref{b-repr-local} and \ref{b-w-Omega}
with
respect to $\CXOm$ are also equivalent.
For \ref{b-weak-qcont-Om} this is not so clear.

To prove this
we will need the following lemma, which
says that quasicontinuity is a local property.

\begin{lem} \label{lem-qcont-subset}
If $u$ is\/ \textup{(}weakly\/\textup{)}
quasicontinuous in the open set\/ $\Om_j$ for every $j \in \Zp$,
then $u$ is\/ \textup{(}weakly\/\textup{)}
quasicontinuous in\/ $\Om=\bigcup_{j=1}^\infty \Om_j$.
\end{lem}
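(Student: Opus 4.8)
The plan is to handle quasicontinuity and weak quasicontinuity in parallel, the only difference being whether the exceptional sets are required to be open. The two ingredients are the countable quasi-subadditivity \eqref{eq-CX-quasi-subadd}, used to keep the total exceptional set small, and the local-to-global criterion for continuity: a function is continuous on a space covered by open sets provided its restriction to each member of the cover is continuous.

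Fix $\eps>0$. For each $j\in\Zp$, the (weak) quasicontinuity of $u$ in $\Om_j$ yields a set $G_j\subset\Om_j$, which is open in the quasicontinuity case, with
\[
    \CX(G_j)<2^{-j-1}\cconc^{-j}\eps
\]
and such that $u|_{\Om_j\setm G_j}$ is continuous. Put $G=\bigcup_{j=1}^\infty G_j\subset\Om$; in the quasicontinuity case $G$ is open. By \eqref{eq-CX-quasi-subadd},
\[
    \CX(G)\le\sum_{j=1}^\infty\cconc^j\CX(G_j)<\sum_{j=1}^\infty 2^{-j-1}\eps=\tfrac12\eps<\eps.
\]

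It remains to check that $u|_{\Om\setm G}$ is continuous. Here the key point is that each $\Om_j$ is open in $\P$, so $\Om_j\setm G=\Om_j\cap(\Om\setm G)$ is relatively open in $\Om\setm G$, and these sets cover $\Om\setm G$ because $\bigcup_j(\Om_j\setm G)=(\bigcup_j\Om_j)\setm G=\Om\setm G$. On each such set $\Om_j\setm G\subset\Om_j\setm G_j$, so $u|_{\Om_j\setm G}$ is a restriction of the continuous function $u|_{\Om_j\setm G_j}$ and hence continuous. Since continuity is detected on an open cover, $u|_{\Om\setm G}$ is continuous. As $\eps>0$ was arbitrary, $u$ is (weakly) quasicontinuous in $\Om$.

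I expect the only genuine obstacle to be this last continuity step, and more precisely the reason the pieces glue: continuity of the restrictions $u|_{\Om_j\setm G_j}$ does not by itself yield continuity on the union, since the sets overlap and $\Om_j\setm G$ need not be closed. What rescues the argument is exactly the openness of the $\Om_j$, which makes $\{\Om_j\setm G\}_j$ an open cover of $\Om\setm G$; for open covers, local continuity upgrades to global continuity. The capacity estimate is then routine bookkeeping via \eqref{eq-CX-quasi-subadd}.
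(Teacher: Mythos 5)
Your proof is correct and follows essentially the same route as the paper's: choose exceptional sets $G_j$ with geometrically decaying capacities compensating the factors $\cconc^j$ in \eqref{eq-CX-quasi-subadd}, take their union, and observe that the restriction to $\Om\setm G$ is continuous. The only difference is that you spell out the gluing step (that $\{\Om_j\setm G\}_j$ is a relatively open cover of $\Om\setm G$, so continuity of the restrictions upgrades to continuity on the union), which the paper's proof asserts without comment; this is a worthwhile clarification but not a different argument.
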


\begin{proof}
Assume that $u$ is quasicontinuous in each $\Om_j$ and let
$\eps >0$.
For each $j \in \Zp$,
there is an open 
set $G_j$ with $\Cp(G_j)<\eps/(2\cconc)^j$
such that $u|_{\Om_j \setm G_j}$ is continuous.
Let $G=\bigcup_{j=1}^\infty G_j$.
Then $G$ is open 
 and $\Cp(G)<\eps$,
by \eqref{eq-CX-quasi-subadd}.
Since $u|_{\Om \setm G}$ is continuous,
it follows that $u$ is quasicontinuous in $\Om$.
The proof for weak quasicontinuity is the same without the
requirement that $G_j$ are open.
\end{proof}

\begin{proof}[Proof of Theorem~\ref{thm-qcont-char-local}]
The implications 
\ref{b-Omega}\imp\ref{b-qcont},
\ref{b-weak-qcont-Om}\imp\ref{b-weak-qcont},
\ref{b-repr-local}\imp\ref{b-repr}
and 
\ref{b-w-Omega}\imp\ref{b-wqcont}
are trivial
(just apply the statement to $\Om=\P$).

\ref{b-qcont}\imp\ref{b-Omega}
Let $\Om \subset \P$ be open and  $u \in \NXloc(\Om)$.
Since $\P$ is Lindel\"of, we can cover $\Om$ by balls
$B_j=B(x_j,r_j)$ so that $3B_j \subset \Om=\bigcup_{j=1}^\infty B_j$
and $u \in \NX(3B_j)$, $j \in \Zp$.
Let $0\le\eta_j\le1$ be a $(1/r_j)$-Lipschitz cut-off function
vanishing outside
$2B_j$ and such that $\eta_j=1$ in $B_j$.
Putting $u_j=u\eta_j$ (extended by $0$ outside $\Om$),
we can use the product rule for $X$-weak upper
gradients, see Mal\'y~\cite[Theorem~A.1]{MalL4},
to see that
\begin{equation} \label{eq-product}
	g_{u_j} \le g_{u} \eta_j + |u| g_{\eta_j}
              \le g_{u} + \frac{|u|}{r_j}
              \quad \text{a.e.\ in } 2B_j.
\end{equation}
Since $g_{u_j} \equiv 0$ a.e.\ 
in $3B_j \setm 2B_j$, by \cite[Corollary~A.4]{MalL4},
the zero extension of $g_{u_j}$ to $\P \setm 3B_j$ 
is  an $X$-weak upper gradient of $u$ in $\P$.
Hence,
we conclude
that $u_j \in \NX$, and  is thus quasicontinuous.
In particular, $u=u_j$ is quasicontinuous in $B_j$.
It thus follows from Lemma~\ref{lem-qcont-subset} that
$u$ is quasicontinuous in $\Om$.

\ref{b-wqcont}\imp\ref{b-w-Omega}
The proof of this implication is a simple modification
of the proof of \ref{b-qcont}\imp\ref{b-Omega}.

\ref{b-weak-qcont}\imp\ref{b-weak-qcont-Om}
Let $\Om \subset \P$ be open and  
$u:\Om \to \eR$ be weakly quasicontinuous.
Since $\P$ is Lindel\"of, we can cover $\Om$ by balls
$B_j=B(x_j,r_j)$ so that $3B_j \subset \Om=\bigcup_{j=1}^\infty B_j$.
Let $0\le\eta_j\le1$ be a $(1/r_j)$-Lipschitz cut-off function
vanishing outside
$2B_j$ and such that $\eta_j=1$ in $B_j$.
Then $u_j:=u\eta_j$ (extended by $0$ outside $\Om$),
is weakly quasicontinuous in $3B_j$.
As $u_j$  vanishes outside $2B_j$,
it follows that $u_j$ is weakly quasicontinuous in $\P$,
and hence, by assumption,  quasicontinuous in $\P$.
In particular, $u=u_j$ is quasicontinuous in $B_j$.
It thus follows from Lemma~\ref{lem-qcont-subset} that
$u$ is quasicontinuous in $\Om$.

\ref{b-repr}\imp\ref{b-repr-local}
Let $\Om \subset \P$ be open and  $u \in \NXloc(\Om)$.
Extend $u$ by $0$ outside $\Om$.
Since $\P$ is Lindel\"of, we can cover $\Om$ by balls
$B_j=B(x_j,r_j)$ so that $3B_j \subset \Om=\bigcup_{j=1}^\infty B_j$
and $u \in \NX(3B_j)$, $j \in \Zp$.
Let $0\le\eta_j\le1$ be a Lipschitz cut-off function
vanishing outside
$2B_j$ and such that $\eta_j=1$ in $B_j$.
Let recursively, 
\[
   \phi_1=\eta_1 
  \quad \text{and} \quad
\phi_j= \biggl( \eta_j-  \sum_{i=1}^{j-1} \phi_i\biggr)_+, \  j=2,3,\dots.
\]
It follows that
$0 \le \phi_j \le 1$ and that $\phi_j$ is Lipschitz and vanishes 
in $\bigcup_{i=1}^{j-1} B_i$ and
outside  $\bigcup_{i=1}^j 2B_i$.
Moreover,
\[
    \sum_{j=1}^k \phi_j = \max_{1 \le j \le k} \eta_j
\quad \text{and so } 
\sum_{j=1}^\infty \phi_j =1 \text{ in }\Om,
\]
i.e.\ $\{\phi_j\}_{j=1}^\infty$ form a partition of unity in $\Om$.
In a similar way as above, cf.\ \eqref{eq-product}, we see that
$u\phi_j \in \NX$. 
By assumption, there is a quasicontinuous representative $\ut_j$ 
such that $\ut_j=u\phi_j$ q.e.\ in $\P$.
We may assume that $\ut_j \equiv 0$ outside 
$\supp \phi_j:=\overline{\{x:\phi_j(x) \ne 0\}}$.
Then $\ut:=\sum_{j=1}^\infty \ut_j=u$ q.e.\ in $\Om$.
Moreover, since $\phi_j=0$ in $B_k$ whenever $j>k$, we have
\[
    \ut=\sum_{j=1}^k \ut_j
    \quad \text{in } B_k.
\]
Because finite sums of quasicontinuous functions are quasicontinuous 
(due to \eqref{eq-CX-quasi-subadd} as in the proof of Lemma~\ref{lem-qcont-subset}), 
$\ut$ is quasicontinuous in each $B_k$ and thus in $\Om$, 
by  Lemma~\ref{lem-qcont-subset}.
\end{proof}

\section{The case \texorpdfstring{$X=L^\infty$}{X=Loo}}
\label{sect-ex-Loo}

\begin{prop} \label{prop-Linfty-gen}
Assume that $X=L^\infty$.
Then the following are true for any metric space $\P$ 
satisfying the standing assumptions\/\textup{:}
\begin{enumerate}
\renewcommand{\theenumi}{\textup{(\arabic{enumi})}}%
\item \label{r1}
$\Cinfty(G)=1$ for every nonempty open set $G$.
\item \label{r2}
Every quasicontinuous function is  continuous.
\item \label{r3} 
For each $E \subset \P$ either
$    \Cinfty(E)=0$ or $\Cinfty(E)=1$. 
\end{enumerate}
\end{prop}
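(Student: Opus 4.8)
The plan is to prove \ref{r1} by a direct two-sided estimate, deduce \ref{r2} from it immediately, and then treat \ref{r3}, which carries the real content, by a self-improvement (power) argument. For the upper bound in \ref{r1}, note that the constant function $u\equiv1$ is admissible for any $E\subset\P$ and has the zero function as an upper gradient, so $\|u\|_{\Ninfty}=\|1\|_{L^\infty}+0=1$; hence $\Cinfty(E)\le1$ for \emph{every} $E$, and in particular $\Cinfty(G)\le1$. For the lower bound when $G$ is nonempty and open, I would use that $G$ contains a ball $B$, which by the standing assumption satisfies $\mu(B)>0$. Any competitor $u\ge1$ on $G$ is then $\ge1$ on the positive-measure set $B$, so $\|u\|_{L^\infty}\ge1$ and thus $\|u\|_{\Ninfty}\ge1$; taking the infimum gives $\Cinfty(G)=1$. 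Statement \ref{r2} now follows with no further work: if $u$ is quasicontinuous, apply the definition with $\eps=\tfrac12$ to get an open $G$ with $\Cinfty(G)<\tfrac12$ and $u|_{\P\setm G}$ continuous; by \ref{r1} every nonempty open set has capacity $1$, so $G=\emptyset$ and $u=u|_{\P}$ is continuous.

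For \ref{r3} I already have $\Cinfty(E)\le1$ from the constant competitor, so it suffices to show that $\Cinfty(E)<1$ forces $\Cinfty(E)=0$. Assuming $\Cinfty(E)<1$, pick $u$ with $u\ge1$ on $E$ and $\|u\|_{\Ninfty}<1$. Replacing $u$ by $\min\{u_+,1\}$, which still equals $1$ on $E$ and, by Lemma~\ref{lem-lattice}, does not increase the minimal upper gradient, I may assume $0\le u\le1$ everywhere, with $a:=\|u\|_{L^\infty}<1$ and $b:=\|g_u\|_{L^\infty}<\infty$. The key idea is to exploit $a<1$ by passing to the powers $u^n$: on $E$ one still has $u^n=1$, while $u^n\le a^n$ a.e., so $\|u^n\|_{L^\infty}\le a^n$. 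Applying the chain rule for minimal upper gradients to the $C^1$ function $t\mapsto t^n$ gives $g_{u^n}=n\,u^{n-1}g_u\le n\,a^{n-1}g_u$ a.e., whence $\|g_{u^n}\|_{L^\infty}\le n\,a^{n-1}b$. Therefore
\[
\Cinfty(E)\le\|u^n\|_{\Ninfty}\le a^n+n\,a^{n-1}b\longrightarrow0\quad\text{as }n\to\infty,
\]
so $\Cinfty(E)=0$, giving the dichotomy.

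The routine parts are the two bounds in \ref{r1} and the deduction of \ref{r2}. The one step needing care is the gradient estimate for $u^n$ in \ref{r3}: the naive bound via the global Lipschitz constant of $t\mapsto t^n$ on $[0,1]$ is $n$, which blows up, and the argument works only because the almost-everywhere bound $u\le a<1$ forces the factor $u^{n-1}\le a^{n-1}$ to decay geometrically and thus beat the linear growth in $n$. I expect this interplay, together with checking that the chain rule for minimal $X$-weak upper gradients is available in the present generality, to be the main obstacle.
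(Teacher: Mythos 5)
Your proof is correct, and parts (1), (2) and the upper bound $\Cinfty(E)\le 1$ in (3) coincide with the paper's argument; but for the dichotomy in (3) you take a genuinely different route. The paper makes a single linear rescaling: with $a=\|u\|_{L^\infty}<1$ it sets $v=(u-a)_+/(1-a)$, with upper gradient $g/(1-a)$; then $v=1$ on $E$ while $v=0$ a.e., hence $v=0$ q.e.\ by \cite[Corollary~6.14]{MalL1}, so $\|v\|_{\NinftyP}=0$ by \cite[Proposition~6.15]{MalL1} and $\Cinfty(E)=0$ in one stroke. Your power trick $u\mapsto u^n$ reaches the same conclusion as a limit of admissible functions with norms $a^n+na^{n-1}b\to 0$, at the price of the chain rule you flag. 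That step does hold here, but note the subtlety: $nu^{n-1}g$ need \emph{not} be a genuine upper gradient of $u^n$ --- on a curve with $\int_\gamma g\,ds=\infty$ the function $u$ may jump at an endpoint while $u^{n-1}g$ integrates to zero along the curve --- so one must pass to weak upper gradients: the curves with $\int_\gamma g\,ds=\infty$ form a family of zero $L^\infty$-modulus (take $\rho=g$ in the definition), and along every other curve $u\circ\gamma$ is absolutely continuous with $|(u\circ\gamma)'|\le g\circ\gamma$, whence $nu^{n-1}g$ is an $L^\infty$-weak upper gradient of $u^n$. Since the Newtonian quasinorm may equivalently be computed via weak upper gradients (\cite[Corollary~5.7]{MalL1}, as noted after \eqref{eq:def-N1X-norm}), and since $u\le a$ a.e.\ bounds the essential supremum of $nu^{n-1}g$ by $na^{n-1}\|g\|_{L^\infty}$, your estimate stands; only the inequality $g_{u^n}\le nu^{n-1}g_u$ is needed, not the equality you state. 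What each approach buys: yours avoids the a.e.-to-q.e.\ upgrade of \cite[Corollary~6.14]{MalL1} entirely and is more hands-on, while the paper's rescaling is shorter and needs no calculus for upper gradients, leaning instead on that piece of Newtonian machinery. Your truncation step via Lemma~\ref{lem-lattice} is correctly invoked.
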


\begin{proof}
\ref{r1}
As $G$ has positive measure we see that
$\|u \|_{\NinftyP} \ge  \|u \|_{L^\infty} \ge 1$ for any $u \ge \chi_G$,
and so $\Cinfty(G) \ge 1$.
The function $u \equiv 1$ then shows that $\Cinfty(G) = 1$.

\ref{r2}
Let $u$ be quasicontinuous. 
Then there is an open set $G$ such that $\Cinfty(G) < \tfrac12$
and 
$u|_{\P \setm G}$ is continuous.
By \ref{r1}, the open set $G$ must be empty, and thus $u$ is continuous.

\ref{r3}
The function $u \equiv 1$ shows that
$\Cinfty(E) \le 1$.
Assume therefore that $\Cinfty(E)<1$.
Then there is $u \in \NinftyP$ with an upper gradient $g \in L^\infty(\P)$ 
such that $\chi_E \le u \le 1$ and $\|u\|_{L^\infty(\P)} + \|g\|_{L^\infty(\P)} < 1$.
Let 
\[
      v=\frac{(u-a)_+}{1-a}
\quad \text{and} \quad 
      g'=\frac{g}{1-a},
\qquad \text{where } a=\|u\|_{L^\infty(\P)}<1. 
\]
Then $g' \in L^\infty(\P)$ is an upper gradient of $v \in L^\infty(\P)$,
and thus $v \in \NinftyP$.
Moreover $v=1$ on $E$.
We also see that $v=0$ a.e.\ and thus q.e.,
by Mal\'y~\cite[Corollary~6.14]{MalL1} 
(see alternatively the discussion after Definition~\ref{df:capacity}).
Therefore
\[
      \CX(E) \le \|v\|_{\NinftyP}=0.
\qedhere
\]
\end{proof}

\begin{proof}[Proof of Theorem~\ref{thm-qcont-char-Linfty}]
\ref{b-H}\imp\ref{b-outer}
By 
Proposition~\ref{prop-Linfty-gen}\ref{r3},
every nonempty set $E$ has $\Cinfty(E)=1$,
from which it immediately follows that $\Cinfty$ is an outer capacity.

$\neg$\ref{b-H}\imp$\neg$\ref{b-outer-zero}
Let $E$ be a nonempty set with $\Cinfty(E)=0$.
Since $\Cinfty(G)=1$ for every open set $G \supset E$, 
by Proposition~\ref{prop-Linfty-gen}\ref{r1},
we see that $\Cinfty$ is not an outer capacity for zero sets.

Together with Theorem~\ref{thm-qcont-char}
this shows all the ``positive'' implications.

It remains to discuss the negated implications.

Example~\ref{ex-von-Koch} 
shows that all properties \ref{b-qcont}--\ref{b-H} can fail.

Each of Examples~\ref{ex-2^{-n}},
\ref{ex-int-Loo}
and~\ref{ex-top-sine}
shows that 
\ref{b-wqcont}\negimp\ref{b-outer}  and 
\ref{b-wqcont}\negimp\ref{b-repr}.

Each of Examples~\ref{ex-union-new}
and~\ref{ex-top-sine+int}
shows that 
\ref{b-outer}\negimp\ref{b-qcont}
and  \ref{b-outer}\negimp\ref{b-wqcont}.
\end{proof}

The following result 
gives a rather detailed description of the case
when $X=L^\infty$ and $\P$ contains no nonconstant rectifiable curves.
It contains Theorem~\ref{thm-Linfty-intro} as a special case.

\begin{thm} \label{thm-von-Koch}
Assume that $X=L^\infty$ and that there are 
$L^\infty$-almost no nonconstant rectifiable curves
in $\P$.
Then $\NinftyP=L^\infty(\P)$ and
the following are true\/\textup{:}
\begin{enumerate}
\renewcommand{\theenumi}{\textup{(\Roman{enumi})}}%
\item \label{q0}
The capacity is given by
\begin{equation} \label{eq-Cinfty-0/1}
    \Cinfty(E)=\begin{cases}
      0, & \text{if } \mu(E)=0, \\
      1, & \text{if } \mu(E)>0.
      \end{cases}
\end{equation}
\item \label{q1}
If $A$ is nonempty and $\mu(A)=0$, then 
$\chi_A \in \Ninfty(\P)$  is weakly quasicontinuous, but not
quasicontinuous, i.e.\ \ref{b-weak-qcont} fails.
\item \label{q2}
If  there is a closed set $F$ with empty interior and $\mu(F)>0$,
then $\chi_F \in \Ninfty(\P)$  is not
weakly quasicontinuous, i.e.\ \ref{b-wqcont} fails.
\end{enumerate}

Moreover, 
\ref{b-outer}--\ref{b-outer-zero} and \ref{b-H}, which
are equivalent by Theorem~\ref{thm-qcont-char-Linfty},
are also equivalent to 
the following statement\/\textup{:}
\begin{enumerate}
\renewcommand{\theenumi}{\textup{(\Alph{enumi}$'$)}}%
\setcounter{enumi}{\value{saveenumi}}
\item \label{t1}
$\mu(\{x\})>0$ for every $x \in \P$.
\end{enumerate}
\end{thm}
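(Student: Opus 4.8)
The plan is to reduce everything to the observation that, under the curve hypothesis, the Newtonian (quasi)norm collapses to the $L^\infty$-norm, after which $\Cinfty$ becomes a $\{0,1\}$-valued set function determined solely by $\mu$. First I would note that the hypothesis in fact forces there to be no nonconstant rectifiable curve in $\P$ at all: if $\rho\in L^\infty$ is Borel, then $\int_\gamma\rho\,ds\le\|\rho\|_{L^\infty}l_\gamma<\infty$ along every rectifiable $\gamma$, so no bounded $\rho$ can have infinite integral along such a curve, and hence the family of nonconstant rectifiable curves, having zero $L^\infty$-modulus, must be empty. Consequently the upper gradient inequality~\eqref{eq:ug_def} is vacuous and $g\equiv0$ is an upper gradient of every measurable function, so the minimal $X$-weak upper gradient vanishes and $\|u\|_{\NinftyP}=\|u\|_{L^\infty}$; thus $\NinftyP=L^\infty(\P)$ with equal quasinorms. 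For the capacity formula~\ref{q0}, testing with $u=\chi_E$ gives $\mu(E)=0\imp\Cinfty(E)=0$, the reverse implication is the general fact recorded after Definition~\ref{df:capacity}, and combined with the dichotomy of Proposition~\ref{prop-Linfty-gen}\ref{r3} this yields~\eqref{eq-Cinfty-0/1}.

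Next I would dispatch~\ref{q1} and~\ref{q2}. For~\ref{q1}, the function $\chi_A$ is weakly quasicontinuous because one may take the exceptional set to be $A$ itself: $\Cinfty(A)=0$ by~\ref{q0}, and $\chi_A$ vanishes identically on $\P\setm A$, hence is continuous there; it fails to be quasicontinuous because $A$ is a nonempty null set, so it has empty interior (a null set contains no ball) and $\chi_A$ is therefore discontinuous, while by Proposition~\ref{prop-Linfty-gen}\ref{r2} no discontinuous function is quasicontinuous. This exhibits a weakly quasicontinuous function that is not quasicontinuous, so~\ref{b-weak-qcont} fails. For~\ref{q2} I would argue by contradiction: if $\chi_F$ were weakly quasicontinuous, then choosing $\eps<1$ would produce a set $E$ with $\Cinfty(E)<1$, hence $\Cinfty(E)=0$ and $\mu(E)=0$ by~\ref{q0}, on whose complement $\chi_F$ is continuous. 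Since $\mu(F\setm E)=\mu(F)>0$, I may pick $x\in F\setm E\subset\P\setm E$. For every ball $B\ni x$ the set $B\setm F$ is open and nonempty, as $\P\setm F$ is dense, hence of positive measure, so $(B\setm F)\setm E\ne\emptyset$; any such point $y$ lies in $\P\setm E$ with $\chi_F(y)=0\ne1=\chi_F(x)$, contradicting continuity of $\chi_F|_{\P\setm E}$ at $x$. This shows~\ref{b-wqcont} fails.

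Finally, to adjoin~\ref{t1} to the block \ref{b-outer}--\ref{b-outer-zero} and~\ref{b-H}, which is already equivalent by Theorem~\ref{thm-qcont-char-Linfty}, it suffices to show \ref{b-H}\eqvnospace\ref{t1}. By~\ref{q0} one has $\Cinfty(\{x\})>0$ exactly when $\mu(\{x\})>0$, so~\ref{b-H} applied to singletons gives~\ref{t1}; conversely, if $\mu(\{x\})>0$ for all $x$, then any nonempty $E$ contains some $x$ with $\mu(E)\ge\mu(\{x\})>0$, whence $\Cinfty(E)=1>0$ by~\ref{q0}, which is~\ref{b-H}.

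The only genuinely nonroutine step is~\ref{q2}: the point is to combine the topological input, that $F$ is closed with empty interior so that $\P\setm F$ is open and dense, with the measure-theoretic input, $\mu(F)>0$ while $\mu(E)=0$, so as to locate, in every neighborhood of a suitable $x\in F\setm E$, points of $\P\setm E$ at which $\chi_F$ takes the value $0$. Everything else is a direct consequence of the norm collapse $\NinftyP=L^\infty(\P)$ and the resulting $\{0,1\}$-dichotomy of $\Cinfty$.
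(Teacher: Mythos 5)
Your reduction of the theorem to the norm collapse $\NinftyP=L^\infty(\P)$, and everything you build on it --- the derivation of \ref{q0} from Proposition~\ref{prop-Linfty-gen}, the arguments for \ref{q1} and \ref{q2}, and the equivalence \ref{b-H}\eqvnospace\ref{t1} --- is correct and essentially coincides with the paper's proof (for \ref{q2} you locate points where $\chi_F$ vanishes in every ball around $x\in F\setm E$ using density of $\P\setm F$, while the paper deduces $B(x,r)\subset\itoverline{B(x,r)\setm E}\subset F$; these are the same idea). However, your very first step contains a genuine error: it is \emph{false} that zero $L^\infty$-modulus of the family of nonconstant rectifiable curves forces that family to be empty. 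The estimate $\int_\gamma\rho\,ds\le\|\rho\|_{L^\infty}l_\gamma$ is invalid because $\|\rho\|_{L^\infty}$ bounds $\rho$ only up to a $\mu$-null set, while arc-length measure along a curve need not be absolutely continuous with respect to $\mu$: a Borel function such as $\rho=\infty\chi_E$ with $\mu(E)=0$ satisfies $\|\rho\|_{L^\infty}=0$, hence $\rho\in L^\infty$, yet $\int_\gamma\rho\,ds=\infty$ for every curve spending positive length in $E$. The paper's own Example~\ref{ex-Sierpinski-new} refutes your claim: the Sierpi\'nski carpet is $2$-quasiconvex, so it contains an abundance of nonconstant rectifiable curves, and nevertheless these form a family of zero $X$-modulus for every quasi-Banach function lattice $X$ (in particular for $X=L^\infty$), by exactly this mechanism. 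Your reduction would thus render Theorem~\ref{thm-von-Koch} inapplicable to the Sierpi\'nski carpet, which is precisely where the paper invokes it.

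The gap is local and repairable: instead of claiming there are no curves at all, observe that under the hypothesis the constant function $0$ is an $L^\infty$-\emph{weak} upper gradient of \emph{every} function, since Definition~\ref{df:ug} requires \eqref{eq:ug_def} only for $X$-a.e.\ nonconstant rectifiable curve, and here the exceptional family may be taken to be the family of all such curves. Because the Newtonian quasinorm is unchanged when the infimum in its definition is taken over $X$-weak upper gradients (Mal\'y \cite[Corollary~5.7]{MalL1}, as quoted in Section~\ref{sect-prelim}), one still obtains $\|u\|_{\NinftyP}=\|u\|_{L^\infty(\P)}$ and $\NinftyP=L^\infty(\P)$; this is exactly how the paper's proof begins. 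With this substitution, the remainder of your argument goes through verbatim.
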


There are two specific examples of what may happen.

\begin{example} \label{ex-von-Koch}
Let $\P$ be 
the von Koch snowflake curve equipped with the 
$\frac{\log4}{\log3}$-dimensional Hausdorff measure $\mu$,
and let $X=L^\infty$.
Then there are no (nonconstant) rectifiable curves and there is a
set $F$ as in \ref{q2} in Theorem~\ref{thm-von-Koch}.
Consequently the following hold:
\begin{itemize}
\item
The function $\chi_F \in \Ninfty(\P)$ is not weakly quasicontinuous,
i.e.\ \ref{b-wqcont} fails.
\item
$C_\infty$ is not an outer capacity for sets of zero capacity, 
i.e.\ \ref{b-outer-zero} fails.
\end{itemize}
Hence \ref{b-qcont}--\ref{b-H}
fail.
\end{example}

\begin{example} \label{ex-2^{-n}}
Let $X=L^\infty$,
\[ 
   \P=\{0,2^{-n}:n \in \Zp\}
\quad \text{and} \quad
\mu=\sum_{n=1}^\infty 2^{-n} \de_{2^{-n}},
\]
where $\de_x$ is the Dirac measure at $x$.
The last part of Theorem~\ref{thm-von-Koch} 
shows that \ref{b-outer-zero} fails.
Let $u \in \Ninfty(\P)$.
Then $u$ is continuous except 
possibly at $0$.
As $\Cinfty(\{0\})=0$, by Theorem~\ref{thm-von-Koch}\ref{q0},
we see that $u$ is weakly quasicontinuous, i.e.\ \ref{b-wqcont} holds.
In particular, \ref{b-wqcont}\negimp\ref{b-outer-zero}.

The function $\chi_E$, where $E=\{2^{-n}:n=2,4,\dots\}$,
shows that continuous functions are not dense in $\Ninfty(\P)$
and that not all functions in $\NinftyP$ have quasicontinuous representatives,
i.e.\ \ref{b-repr} fails.
In particular, \ref{b-wqcont}\negimp\ref{b-repr}.
\end{example}

\begin{proof}[Proof of Theorem~\ref{thm-von-Koch}]
Since there are 
$L^\infty$-almost no
nonconstant rectifiable curves in $\P$,
all functions have 0 as an $L^\infty$-weak upper gradient and 
\[
\NinftyP=L^\infty(\P).
\]

\ref{q0}
That \eqref{eq-Cinfty-0/1} holds is easily verified in a similar way as the proof
of Proposition~\ref{prop-Linfty-gen}\ref{r1}.

\ref{q1}
The function $\chi_{A} \in L ^\infty(\P)=\Ninfty(\P)$ 
is weakly quasicontinuous (as $\Cinfty(A)=0$).
On the other hand, $A$ is not open (since $\mu(B)>0$ for every ball $B$)
and thus $\chi_{A}$ is not continuous, and 
thus 
not quasicontinuous either, by Proposition~\ref{prop-Linfty-gen}\ref{r2}.

\ref{q2}
The function  $u=\chi_F \in L ^\infty(\P)=\Ninfty(P)$. 
Assume that $u$ is weakly quasicontinuous.
Then there is a set $A$ such that $\Cinfty(A)< \tfrac12$ 
and $u|_{\P \setm A}$ is continuous.
By \eqref{eq-Cinfty-0/1}, $\mu(A)=0$ and thus there is $x \in F \setm A$.
Since $u(x)=1$ and $u|_{\P \setm A}$ is continuous,
there is $r>0$ such that $u >\tfrac12$ in $B(x,r) \setm A$.
In particular 
\[
B(x,r) \subset \itoverline{B(x,r) \setm A} \subset \itoverline{F} = F,
\]
which contradicts the choice of $F$ with empty interior.
Hence $u$ is not weakly quasicontinuous.

Finally, \ref{b-H}\eqvnospace\ref{t1}
by \eqref{eq-Cinfty-0/1}.
\end{proof}

\begin{proof}[Proof of Theorem~\ref{thm-Linfty-intro}]
This follows directly from Theorem~\ref{thm-von-Koch}
and Example~\ref{ex-von-Koch}.
\end{proof}

The rest of the paper is devoted to 
more examples.
For the reader's convenience, 
the properties holding
in each example are summarized in Table~\ref{table-ex}.

\begin{table}[t]
\begin{center}
\begin{tabular}
{|l|@{\,}c@{\,}|@{\,}c@{\,}|@{\,}c@{\,}|@{\,}c@{\,}|@{\,}c@{\,}|@{\,}c@{\,}|@{\,}c@{\,}|@{\,}c@{\,}|@{\,}c@{\,}|@{\,}c@{\,}|@{\,}c@{\,}|@{\,}c@{\,}|}
\hline
Example & \ref{b-qcont} & \ref{b-outer} & \ref{b-qouter} & \ref{b-weak-qcont} & 
\ref{b-outer-zero} & \ref{b-repr} & \ref{b-wqcont} & \ref{df:BFS.finmeasfinnorm}
  & (AC) & 
$C(\P)$ & $X=$ & $\P$ \\ 
&&&&&&&&&& dense& $L^\infty$ & \\ \hline
\ref{ex-von-Koch}  & \fa & \fa & \fa & \fa & \fa& \fa & \fa &   \tr & \fa  
& \fa & \tr & \cpt \\ \hline  
\ref{ex-2^{-n}} & \fa & \fa & \fa & \fa & \fa & \fa & \tr &  \tr & \fa  
& \fa & \tr & \cpt\\ \hline
\ref{ex-union-new} & \fa &  \tr & \tr & \tr & \tr & \fa & \fa  & \tr & \fa  
& \fa & \tr & \cpt\\ \hline
\ref{ex-int-Loo} & \fa & \fa & \fa & \fa & \fa & \fa & \tr & \tr & \fa  
& \fa & \tr & \cpt \\ \hline
\ref{ex-top-sine} & \fa & \fa & \fa & \fa & \fa & \fa & \tr & \tr & \fa  
& \fa & \tr & \cpt\\ \hline  
\ref{ex-top-sine+int} & \fa & \tr & \tr & \tr & \tr & \fa & \fa & \tr & \fa  
& \fa & \tr & \cpt\\ \hline  
\ref{ex-union-new-b} & \fa &  \fa & \tr & \tr & \tr & \fa & \fa  & \tr & \fa  
& \fa & \fa & \cpt \\ \hline
\ref{ex-int-not-Loo}(a) 
& \fa & \fa & \fa & \fa & \fa & \tr & \tr & \fa & \tr
& \tr & \fa & \cpt\\ \hline
\ref{ex-int-not-Loo}(b)* 
& \fa & \fa & \fa & \fa & \fa & \tr & \tr & \tr & \tr 
& \tr & \fa & \cpt\\ \hline
\ref{ex-int-not-Loo}(c) 
& \fa & \fa & \fa & \fa & \fa & \fa & \tr & \tr & \fa  
& \fa & \fa & \cpt\\ \hline
\ref{ex-6.13}
& \fa &  \fa & \fa & \tr & \tr & \fa & \fa  & \tr & \fa  
& \fa & \fa & \proper
\\ \hline
\ref{exa:nonqcont-N1p(x)} & \fa & \fa & \fa & \fa & \fa & \fa & \tr & \tr & \fa
& \fa & \fa & \cpt\\ \hline  
\ref{ex-Sierpinski-new} & \fa & \fa & \fa & \fa & \fa & \fa & \fa  & \tr & \fa 
& \fa  & \tr & \cpt\\ \hline   
\end{tabular}
\end{center}
\caption{This table summarizes the properties
each example has.
Here $0= \text{False}$ and $1=\text{True}$.
\newline
* Note that Example~\ref{ex-int-not-Loo}(b) does \emph{not} satisfy
our standing assumption that balls have finite measure.
}
\label{table-ex}
\end{table}

All our examples  are normed Banach function
lattices (i.e.\ not only quasinormed) and satisfy
\ref{Fatou} and~\ref{df:BFL.locL1} 
but fail the Vitali--Carath\'eodory property (VC)
(so these properties are not listed in the table).
The properties \ref{df:BFS.finmeasfinnorm} and
absolute continuity (AC)  are for $X$,
while the density of continuous functions is for $\NX$. 
In the last column we mark if $\P$ is compact
(cpt) or proper.
Note that if \ref{b-repr} or~\ref{b-wqcont} fails then continuous functions
cannot be dense in $\NX$, by Theorem~\ref{thm-qcont-char-cont-dense}.

In this section we concentrate on examples
based on $L^\infty$.
Note that $L^\infty$ is a Banach function space, but the norm
is not absolutely continuous (AC) (unless $\P$ is finite)
and does not  satisfy the Vitali--Carath\'eodory property (VC)
(if there is $x_0 \in \P$ with $\mu(\{x_0\})=0$).
(Note that such a point
$x_0$ cannot be isolated as we have assumed that balls have positive 
measure.)

\begin{deff}  \label{def-AC}
The (quasi)norm $\| \cdot \|_X$ is \emph{absolutely continuous} 
(AC) if 
\[
\| u \chi_{E_n} \|_X \to 0 \quad \text{as } n\to\infty,
\]
whenever $u \in X$ and
$\{E_n\}_{n=1}^\infty$ is a decreasing sequence of measurable sets
with $\mu(\bigcap_{n=1}^\infty E_n) = 0$.
\end{deff}

Mal\'y~\cite[Proposition~2.1]{MalL4} has shown that
(AC)+\ref{df:BFS.finmeasfinnorm}\imp(VC), under our standing assumption
that balls have finite measure.

\begin{example} \label{ex-union-new}
Consider  the compact space
\begin{equation*} 
  \P = \bigcup_{n=0}^\infty I_n,
  \quad \text{where} \quad
  I_n=\begin{cases}
     [-1,0], & \text{if } n=0, \\
     [2^{-1-2n}, 2^{-2n}], & \text{if } n\ge 1,
    \end{cases}
\end{equation*}
endowed with the Euclidean distance and $1$-dimensional Lebesgue measure.
Let
$X=L^\infty$.
Note that the Vitali--Carath\'eodory property (VC) 
fails for $\chi_{\{0\}}$.

Each function in $\NinftyP$ is Lipschitz continuous within each interval $I_n$,
and thus continuous on $\P$, except possibly at $0$ (from the right).
It follows that if $v\in\NinftyP$ satisfies $v \ge 1$ on a nonempty set $E\subset\P$, 
then 
\[
\|v\|_{\NinftyP} \ge \|v\|_{L^\infty} \ge 1
\quad \text{and thus $\Cinfty(E) \ge 1$.}
\]
Since clearly $\Cinfty(E) \le \|1 \|_{\NinftyP} =1$, we see that $\Cinfty(E)=1$.

Thus $\Cinfty$ is an outer capacity, i.e.\ \ref{b-outer} holds, and
any weakly quasicontinuous function must be continuous.
But $\chi_{I_0} \in \NinftyP$ is not continuous, and thus not
weakly quasicontinuous either, i.e.\ \ref{b-wqcont} fails.
Moreover, $\chi_{I_0}$ does not have any weakly quasicontinuous representative.
Hence \ref{b-outer}\negimp\ref{b-wqcont} and 
thus also \ref{b-outer}\negimp\ref{b-qcont}.
\end{example}

\begin{example} \label{ex-int-Loo}
We are now going to give an example with the same properties
as Example~\ref{ex-2^{-n}}, but with a nonatomic measure.

Consider the compact space 
\begin{equation*} 
  \P = \bigcup_{n=0}^\infty I_n,
  \quad \text{where} \quad
  I_n=\begin{cases}
     \{0\}, & \text{if } n=0, \\
     [2^{-1-2n}, 2^{-2n}], & \text{if } n\ge 1,
    \end{cases}
\end{equation*}
endowed with the Euclidean distance and $1$-dimensional Lebesgue measure,
and with $X=L^\infty$.
Since there are no curves connecting the origin $0$ to any other
point $x \in \P$, the constant zero function is an upper gradient of $\chi_{\{0\}}$. 
Thus, $\chi_{\{0\}}\in\NinftyP$ and $\Cinfty(\{0\}) = 0$.
Obviously, $\chi_{\{0\}}|_{\P \setminus \{0\}}$ 
is continuous and hence $\chi_{\{0\}}$ is weakly quasicontinuous.

Each function $ u \in \NinftyP$ is Lipschitz continuous within each interval $I_n$, 
and thus continuous on $\P$, except possibly at $0$.
Hence  $u$ is weakly quasicontinuous, i.e.\ \ref{b-wqcont} holds.
It also follows, as in Example~\ref{ex-union-new}, 
that
\[
     \Cinfty(E)=\begin{cases}
        0, & \text{if $E=\emptyset$ or $E=\{0\}$}, \\ 
        1, & \text{otherwise}.
        \end{cases}
\]
Hence, $\Cinfty$ is
not an outer capacity for sets of zero capacity, i.e.\ \ref{b-outer-zero} fails. 

Moreover, every quasicontinuous function is continuous, by
Proposition~\ref{prop-Linfty-gen}.
On the other hand, if $E=\bigcup_{k=1}^\infty I_{2k}$,
then $\chi_{E} \in \NX$ does not have a continuous representative, and thus
not a quasicontinuous representative, i.e.\ \ref{b-repr} fails.
In particular, \ref{b-wqcont}\negimp\ref{b-outer-zero}
and \ref{b-wqcont}\negimp\ref{b-repr}.
The Vitali--Carath\'eodory property (VC) 
again fails for $\chi_{\{0\}}$.
\end{example}

\begin{example}  \label{ex-top-sine}
(Damped topologist's sine curve and $N^{1,\infty}$)
This is yet another example
with similar properties to Examples~\ref{ex-2^{-n}} and~\ref{ex-int-Loo},
but with a nonatomic measure and a pathconnected space~$\P$.
Let 
\[
     \P=\biggl\{\biggl(t,t\sin\frac{1}{t}\biggr): 0< t \le 1\biggr\} \cup \{(0,0)\},
\]
equipped with the Euclidean metric.
Then $\P$ is a compact pathconnected subset of $\R^2$,
but it is not rectifiably connected.

Let $\mu(E)=\LL^1(\pi(E))$, where 
$\pi(t,y)=t$ is the 
orthogonal projection of $\P$ onto $[0,1]$ 
and $\LL^1$ is the Lebesgue measure on $[0,1]$.
Also let $X=L^\infty$.

Since $\P$ has infinite length,
there are no (nonconstant)
rectifiable curves in $\P$ through $0:=(0,0)$.
It thus follows that $0$ is an
upper gradient of $\chi_{\{0\}}$
and that $\|\chi_{\{0\}}\|_{\NinftyP}=0$.
Hence $\Cinfty(\{0\})=0$.

On the other hand, if $v \in \NinftyP$ then 
$v$ is locally Lipschitz on $\P \setm \{0\}$.
Thus all functions in $\NinftyP$ are weakly  quasicontinuous, 
i.e.\ \ref{b-wqcont} holds.
We also conclude that $\Cinfty(\{x\})=1$ if $x \ne 0$, and 
hence $\Cinfty(G)=1$ for every nonempty open set $G$.
Thus $\Cinfty$ is not an outer capacity 
for sets of zero capacity, i.e.\ \ref{b-outer-zero} fails.
In particular, \ref{b-wqcont}\negimp\ref{b-outer-zero}.

Next, define  
\[
        h(x)= \dist (w(x),\Z),
        \quad \text{where} \quad
     w(x)=\begin{cases}
        l_x, & \text{if }x \ne 0, \\
        0, & \text{if }x=0,
        \end{cases}
\]
and 
$l_x$ is the length of the geodesic (in $\P$) between
$x$ and $(1,\sin 1)$.
Then $1$ is a
minimal $L^\infty$-weak 
upper gradient of $h$ and
$h \in \NinftyP$.
If $\htilde =h $ q.e., then $\htilde=h$ on $\P \setm \{0\}$
and since
\begin{equation}   \label{eq-discont}
0 = \liminf_{x\to0} h(x) < \limsup_{x\to0} h(x) = \tfrac12,
\end{equation}
it follows that $\htilde$ is not 
continuous, and thus not quasicontinuous, 
by Proposition~\ref{prop-Linfty-gen}.
Hence
\ref{b-repr} fails and \ref{b-wqcont}\negimp\ref{b-repr}.

Moreover, if $f \in \NinftyP$ is  continuous, then
$\|f-h\|_{L^\infty(\P)} \ge \tfrac{1}{4}$ 
due to \eqref{eq-discont}.
Hence continuous functions are not dense in $\NinftyP$.
\end{example}

\begin{example} \label{ex-top-sine+int}
We modify Example~\ref{ex-top-sine} as follows.
Let 
$\P=\P^- \cup\P^+$, 
where
\[
  \P^-= [-1,0] \times \{0\}
 \quad \text{and} \quad
     \P^+=\biggl\{\biggl(t,t\sin\frac{1}{t}\biggr): 0< t \le 1\biggr\},
\]
equipped with the Euclidean metric.
Then $\P$ is a compact pathconnected subset of $\R^2$,
but it is not rectifiably connected.
We equip $\P$ with the measure $\mu$  such that
$\mu|_{\P^-}$ is the Lebesgue measure and 
$\mu|_{\P^+}$ is the  
measure in Example~\ref{ex-top-sine}.
Note that $\mu$ is nonatomic.
Let $X=L^\infty$.

Here $\Cinfty(E) =1$ whenever $E \ne \emptyset$,
and thus $\Cinfty$ is an outer capacity, i.e.\ \ref{b-outer} holds.
Moreover any weakly quasicontinuous function must be continuous.
However, $\chi_{\P^+},\chi_{\P^-} \in \NinftyP$  
and they are 
not even weakly quasicontinuous, i.e.\ 
\ref{b-wqcont} fails.
In particular, \ref{b-outer}\negimp\ref{b-wqcont} 
and thus also \ref{b-outer}\negimp\ref{b-qcont}.
\end{example}

\section{Non-\texorpdfstring{$L^\infty$}{Loo} examples}
\label{sect-ex-non-Loo}

\begin{example} \label{ex-union-new-b} 
(Weighted $L^\infty$)
Let $\P=\bigcup_{n=0}^\infty I_n$ be as in Example~\ref{ex-union-new}.
Recall that $\P$ is compact.
If $L^\infty(\P)$ is for some $c >1$ replaced by
\[
     \|u\|_X= \|uw\|_{L^\infty(\P)},
     \quad \text{where} \quad
    w=\begin{cases}
      1/c  & \text{in } I_0, \\
      1,    & \text{otherwise,}
      \end{cases}
\]
then
\[
     \CX(E)=\begin{cases}
       0, & \text{if } E =\emptyset, \\
       1/c, & \text{if } \emptyset \ne E \subset I_0, \\
       1, & \text{if } E \setm I_0 \ne \emptyset,
       \end{cases}
\]
showing that $\CX$ is a $c$-quasiouter capacity, with $c$ being optimal,
i.e.\ \ref{b-qouter} holds while \ref{b-outer} fails.
In particular, \ref{b-qouter}\negimp\ref{b-outer}.
Since the norms $\|\cdot\|_X$ and $\|\cdot\|_{L^\infty(\P)}$ are equivalent,
\ref{df:BFS.finmeasfinnorm} holds while (AC) and \ref{b-wqcont} fail,
as in Example~\ref{ex-union-new}.
The Vitali--Carath\'eodory property (VC)
again fails for $\chi_{\{0\}}$.
\end{example}

\begin{example} \label{ex-int-not-Loo}
Let $\P=\bigcup_{n=0}^\infty I_n$ be as in Example~\ref{ex-int-Loo}.
Recall that $\P$ is compact.

(a)
Let $1 \le p < \infty$ and let $X$ be given by the norm 
\[
  \|u\|^p_{X} = \int_{\P} |u(x)|^p\,\frac{dx}{x}.
\]
Here, $\chi_{\P} \notin X$ and thus \ref{df:BFS.finmeasfinnorm}  fails, 
so $X$ is not a Banach function space, only 
a Banach function lattice.
Moreover, $\chi_{\{0\}}\in X$ shows that 
the Vitali--Carath\'eodory property (VC) fails.

As in Example~\ref{ex-int-Loo}, it follows that $\chi_{\{0\}} \in X$ and $\CX(\{0\})=0$.
Moreover, $\CX(G) = \infty$ for every
open neighbourhood $G$ of $0$ as $\chi_G \notin X$. 
Consequently, 
$\CX$ is not an outer capacity 
for sets of zero capacity, i.e.\ \ref{b-outer-zero} fails.

Let $u \in \NX$ with an upper gradient $g\in X$.
Since for every $[y,z]\subset I_n$ with $n=1,2,\dots$\,,
\begin{equation}    \label{eq-cont-on-In}
|u(y)-u(z)| \le \int_y^z g\,ds 
  = \int_y^z \frac{g(x)}{x^{1/p}} x^{1/p} \,dx
\le 2^{-2n} \biggl( \int_y^z g(x)^p \,\frac{dx}{x}\biggr)^{1/p}
\end{equation}
and the last integral tends to zero as $y \to z$,
we conclude that
$u$
is  continuous within each interval $I_n$, 
and thus continuous on $\P$, except possibly at $0$.

Next we shall show that
\begin{equation} \label{eq-ex-int-not-Loo}
\lim_{x \to 0+}  u(x)=0.
\end{equation}
Note first that
$\lim_{n \to \infty}  \inf_{I_n} |u|=0$.
Assume that $\limsup_{x \to 0+} |u(x)|>a>0$. 
Then there are infinitely many $n$ such that $\osc_{I_n}  |u| >a$.
For such $n$, by H\"older's inequality and since $g$ is an upper gradient, we
get as in 
\eqref{eq-cont-on-In} that
\[  
   a < \int_{I_n} g \, dx
     \le 2^{-2n} \biggl( \int_{I_n} g^p \, \frac{dx}{x} \biggr)^{1/p} 
     \le   2^{-2n} \|g\|_X, 
\] 
from which it follows that $g \notin X$, a contradiction.
Hence
we have shown \eqref{eq-ex-int-not-Loo}.
It then follows 
that $\ut=u\chi_{\P \setm \{0\}}$ is continuous on $\P$ and
$\ut=u$ q.e., i.e.\ \ref{b-repr} holds and \ref{b-repr}\negimp\ref{b-outer-zero}.
Moreover, since 
\[
   \lim_{k \to \infty} \|u-u_k\|_{\NX} = 0, 
\quad \text{where } 
u_k(x) = \begin{cases} 
    u(x), &  \text{if } x \in \bigcup_{n=1}^k I_n, \\
    0, & \text{otherwise.}
   \end{cases}
\]
and the functions $u_{k}$ 
are continuous, we see that
continuous functions are dense in $\NX$.

(b)
If we consider $X=L^p(\P,\mu)$, where $d\mu=dx/x$, then 
we obtain the exact same space as in (a), but this time as a standard 
 $L^p$ space.
Note however that this measure fails the ``standing assumption'' that $\mu(B)<\infty$
for every ball $B$.
This shows that the assumption that balls have finite measure  is essential for at least
some of the Newtonian $\Np$ theory in the literature.
For this interpretation, \ref{df:BFS.finmeasfinnorm}  holds
and (VC) fails.

(c)
Yet another (non-$L^\infty$) modification of this example is
letting 
$X$ be given by the (maximal function) norm
\[
  \|u\|_X =  \sup_{r>0} \frac{1}{\LL^1(\P \cap (0, r))}  
   \int_{\P \cap (0, r)} |u(x)|\,dx,
\]
where $\LL^1$ is 
the Lebesgue measure.
Again, $\chi_{\{0\}}\in\NX$ and $\CX(\{0\}) = 0$.

Now, let $G \subset \P$ be an open set containing $0$.
Then $\CX(G) \ge \|\chi_G \|_X \ge 1$,
and thus $\CX$ is not an outer capacity for sets of zero capacity,
i.e.\ \ref{b-outer-zero} fails.

As $\|u\|_X \ge \|u\|_{L^1(\P)}$ it follows essentially from
the first inequality in~\eqref{eq-cont-on-In}
that each 
$u \in \NX$ is  continuous within each interval $I_n$, 
and thus continuous on $\P$, except possibly at $0$.
Thus $u|_{\P \setm \{0\}}$ is continuous, and so $u$ is weakly quasicontinuous,
i.e.\ \ref{b-wqcont} holds and 
\ref{b-wqcont}\negimp\ref{b-outer-zero}.

Finally, let
$v=\chi_E$, where $E=\bigcup_{k=1}^\infty I_{2k}$.
Then $v \in \NX$ and $v$ does not have any quasicontinuous representative.
Hence \ref{b-repr} fails and 
\ref{b-wqcont}\negimp\ref{b-repr}.
The function $v$ also shows that continuous functions
are not dense in $\NX$.

(d) It is fairly easy to modify (a)--(c) using instead
the pathconnected set $\P$ in Example~\ref{ex-top-sine}.
We leave the details to the interested reader.
\end{example}

\begin{example} \label{ex-6.13}
For each $m=1,2,\dots$\,, let
\[
  \P^-_m = \biggl[ - \frac{1}{m}, 0 \biggr] \times \{ m \} \quad \text{and} \quad
	\P^+_m = \bigcup_{n=0}^\infty [2^{-1-2n}, 2^{-2n}] \times \{ m \}
\]
be a collection of line segments in $\R^2$. Then, we define
\[
  \P^- = \bigcup_{m=1}^\infty \P^-_m, \quad
  \P^+ = \bigcup_{m=1}^\infty \P^+_m  \quad \text{and} \quad
	\P = \P^- \cup \P^+.
\]
Let $\P$ be endowed with the Euclidean distance and 1-dimensional 
Lebesgue 
measure.
Note that $\P$ is proper.
Given a measurable function $u$, let 
\[
  \|u\|_X = \|u\|_{L^1(\P^-)} + \|u\|_{L^\infty(\P^+)}.
\]
Then, 
\[
  C_X(\P_m^-) = \frac{1}{m}, \quad
  C_X(\P_m^+) = 1
\quad\text{and}\quad
	C_X(\{x\}) =
	\begin{cases}
	  1/m, & \text{if } x \in \P_m^-, \\
		1,   & \text{if } x \in \P_m^+.
	\end{cases}
\]
For $\P_m^+$ and $x \in \P_m^+$, 
this is shown as in Example~\ref{ex-union-new}, 
while for $\P_m^-$ and $x \in \P_m^-$, 
the upper bound is similar and the lower bound
 follows from the estimate
\[
\|u\|_{N^{1,1}(\P_m^-)} \ge \int_{\P_m^-} g\,ds + \frac{1}{m} \inf_{\P_m^-}|u|
\ge 1- \inf_{\P_m^-} |u| + \frac{1}{m} \inf_{\P_m^-} |u| \ge \frac{1}{m}
\]
for every function $u$ with an upper gradient $g$, admissible in defining 
$C_X(\P_m^-)$ or $C_X(\{x\})$.

In particular, every weakly quasicontinuous function $u$ is continuous
(and even Lipschitz continuous on each $\P_m^+$), 
i.e.\ \ref{b-weak-qcont} holds.

For any $m=1,2,\dots$\,, if $G \supset \P_m^-$ is open, then
$G \cap \P_m^+ \neq \emptyset$. Therefore,
\[
   \CX(\P_m^-) = \frac{1}{m}  < 1 \le \CX(G).
\]
Letting $m \to \infty$ shows that $\CX$ is not a quasiouter capacity,
i.e.\ \ref{b-qouter} fails and 
\ref{b-weak-qcont}\negimp\ref{b-qouter}.
The functions $\chi_{\P_m^-} \in \NX$, $m \ge 2$, show that 
\ref{b-wqcont} fails,
that the Vitali--Carath\'eodory property (VC) fails,
and that continuous functions are not dense in $\NX$.
\end{example}

\section{The variable exponent
case \texorpdfstring{$X=L^{p(\cdot)}$}{X=Lp(.)}}

We shall now modify Example~\ref{ex-top-sine} to provide non-quasicontinuous
Newtonian functions based on the variable exponent space $L^{p(\cdot)}$.
Several of the other examples can also be modified similarly, but we
restrict ourselves to this example to illustrate the main ideas.

Recall that the Luxemburg norm on $L^{p(\cdot)}(E)$,
where $E$ and $p: E \to [1, \infty]$ are measurable,
 is given by
\[
\|f\|_{L^{p(\cdot)}(E)} = \inf \biggl\{ \la>0: 
   \int_{E} \biggl(\frac{|f(x)|}{\la}  \biggr)^{p(x)} \,d\mu(x) \le 1 \biggr\}
\]
and that $L^{p(\cdot)}(E)= \{f: \|f\|_{L^{p(\cdot)}(E)}<\infty\}$.
When $p(x)=\infty$, we use the convention that 
\[
   a^\infty=\begin{cases}
    0, & \text{if } 0 \le a<1, \\
    1, & \text{if } a=1, \\
    \infty, & \text{if } a>1.
   \end{cases}
\]

Our examples will use the following characterization.

\begin{prop}
\label{pro:Lp(x)_ACnorm}
Assume that 
$p: \P \to [1, \infty]$ is measurable. 
Let $E \subset \P$ be a measurable set with $\mu(E) < \infty$ and 
let 
\[
E_n = \{ x \in E: p(x) > n \}.
\]
Then, 
\[
  e^{p(\cdot)} \in \bigcap_{q\in[1,\infty)} L^q(E)
	\quad\text{if and only if} \quad
	\| \chi_{E_n} \|_{L^{p(\cdot)}(E)} \to 0\ \ \text{as }n\to \infty.
\]
Moreover, if $\|e^{p(\cdot)}\|_{L^{q_0}(E)} = \infty$ for some $q_0 \in [1, \infty)$,
then $\| \chi_{E_n} \|_{L^{p(\cdot)}(E)} \ge e^{-q_0}$ for every $n=1,2,\dots$\,.
\end{prop}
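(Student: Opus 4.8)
The plan is to work directly with the modular
$\rho_n(\la):=\int_{E_n}\la^{-p(x)}\,d\mu$ (on $E_n$ the characteristic function equals $1$), so that by the definition of the Luxemburg norm we have $\|\chi_{E_n}\|_{L^{p(\cdot)}(E)}=\inf\{\la>0:\rho_n(\la)\le1\}$. The single observation driving everything is the substitution $\la=e^{-s}$: for $s>0$ one has $\rho_n(e^{-s})=\int_{E_n}e^{sp(x)}\,d\mu=\int_{E_n}(e^{p(x)})^s\,d\mu$, with the convention for $a^\infty$ making both sides agree (and equal to $\infty$) on $\{p=\infty\}$. This converts statements about the norm of $\chi_{E_n}$ into statements about the $L^s$-integrability of $e^{p(\cdot)}$ over the tail set $E_n$. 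I will use repeatedly that the bounded part is harmless: $\int_{E\setminus E_n}e^{sp}\,d\mu\le e^{sn}\mu(E)<\infty$, so that $e^{p(\cdot)}\in L^s(E)$ is equivalent to $\int_{E_n}e^{sp}\,d\mu<\infty$ for one (equivalently every) $n$.

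I would prove the \emph{moreover} clause first, as it is short and already yields one half of the equivalence. Suppose $\|e^{p(\cdot)}\|_{L^{q_0}(E)}=\infty$, i.e.\ $\int_E e^{q_0 p}\,d\mu=\infty$. Since $\int_{E\setminus E_n}e^{q_0 p}<\infty$, this forces $\int_{E_n}e^{q_0 p}\,d\mu=\infty$ for \emph{every} $n$. For any $\la\le e^{-q_0}$, the pointwise bound $\la^{-p(x)}\ge(e^{-q_0})^{-p(x)}=e^{q_0 p(x)}$ (the map $t\mapsto t^{-p(x)}$ being decreasing) gives $\rho_n(\la)\ge\int_{E_n}e^{q_0 p}=\infty>1$; hence every admissible $\la$ exceeds $e^{-q_0}$, so $\|\chi_{E_n}\|_{L^{p(\cdot)}(E)}\ge e^{-q_0}$ for all $n$. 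Taking contrapositives, this already shows that $\|\chi_{E_n}\|_{L^{p(\cdot)}(E)}\to0$ forces $e^{p(\cdot)}\in L^q(E)$ for every $q\in[1,\infty)$, which is one implication of the stated equivalence.

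For the converse I would run a dominated-convergence argument. Assuming $e^{p(\cdot)}\in\bigcap_{q\in[1,\infty)}L^q(E)$, note first that $e^{p(\cdot)}\in L^1(E)$ forces $\mu(\{p=\infty\})=0$, so the decreasing sets $E_n$ satisfy $\mu(\bigcap_{n}E_n)=0$. Fix any $\eps\in(0,e^{-1}]$ and set $s=-\ln\eps\ge1$; then $e^{sp}=(e^p)^s\in L^1(E)$ by hypothesis, and dominated convergence gives $\int_{E_n}e^{sp}\,d\mu\to0$. Choosing $N$ with $\int_{E_n}e^{sp}\,d\mu\le1$ for $n\ge N$ means $\rho_n(\eps)\le1$, whence $\|\chi_{E_n}\|_{L^{p(\cdot)}(E)}\le\eps$ for $n\ge N$. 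Since $\eps$ can be taken arbitrarily small, $\|\chi_{E_n}\|_{L^{p(\cdot)}(E)}\to0$.

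All steps are routine once the substitution $\la=e^{-s}$ is in place; the only points requiring care are the bookkeeping at $\{p=\infty\}$ (checking that the conventions for $a^\infty$ make the modular agree with $\int e^{sp}$, and that this set is null under the integrability hypothesis) and the definition/monotonicity argument identifying $\{\la:\rho_n(\la)\le1\}$ well enough to read off the two one-sided estimates $\|\chi_{E_n}\|_{L^{p(\cdot)}(E)}\le\eps$ and $\|\chi_{E_n}\|_{L^{p(\cdot)}(E)}\ge e^{-q_0}$. I expect the main (mild) obstacle to be confirming that no boundary subtlety in the infimum defining the Luxemburg norm spoils these two estimates, which is why I isolate them as the two directions $\rho_n(\eps)\le1\Rightarrow\|\chi_{E_n}\|\le\eps$ and $\rho_n(\la)=\infty$ on $(0,e^{-q_0}]\Rightarrow\|\chi_{E_n}\|\ge e^{-q_0}$, each of which follows from the definition of the infimum alone.
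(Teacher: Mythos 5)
Your proof is correct. It rests on the same basic mechanism as the paper's proof --- evaluating the Luxemburg modular of $\chi_{E_n}$, where the substitution $\lambda=e^{-s}$ identifies $\int_E(\chi_{E_n}/\lambda)^{p(x)}\,d\mu$ with $\int_{E_n}\bigl(e^{p(x)}\bigr)^{s}\,d\mu$ --- but your implementation differs in two ways, both of which streamline the argument. For the direction assuming $e^{p(\cdot)}\in\bigcap_{q\in[1,\infty)}L^q(E)$, the paper fixes $\lambda$, sets $q_\lambda=|{\log\lambda}|$, and bounds $\int_{E_n}e^{q_\lambda p}\,d\mu\le\|e^{p(\cdot)}\|_{L^{2q_\lambda}(E)}^{q_\lambda}\,\mu(E_n)^{1/2}$ by the Cauchy--Schwarz inequality, then invokes $\mu(E_n)\to0$; your dominated-convergence argument (valid since $e^{p(\cdot)}\in L^1(E)$ forces $\mu(E\cap\{p=\infty\})=0$, so $e^{sp}\chi_{E_n}\to0$ a.e.\ with integrable majorant $e^{sp}$) reaches the same conclusion more directly, using only $e^{p(\cdot)}\in L^{s}(E)$ rather than $L^{2q_\lambda}(E)$. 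For the other direction, the paper splits into the cases $\mu(E_\infty)>0$ and $\mu(E_\infty)=0$, obtaining the lower bound in the latter case from $\int_E e^{q_0p}\,d\mu-e^{q_0n}\mu(E\setminus E_n)=\infty$; your version of this computation (the finiteness of $\int_{E\setminus E_n}e^{q_0p}\,d\mu\le e^{q_0n}\mu(E)$ forces $\int_{E_n}e^{q_0p}\,d\mu=\infty$) is the same estimate, but by proving the quantitative ``moreover'' clause first --- and for all $q_0\in[1,\infty)$, where the paper's converse computation is stated for $q_0\in(1,\infty)$ --- and then deducing one implication of the equivalence by contraposition, you make the case distinction on $E_\infty$ unnecessary, since the modular is infinite on all of $(0,e^{-q_0}]$ whether or not $\{p=\infty\}$ is null. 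Your endpoint bookkeeping for the Luxemburg infimum (admissibility of $\varepsilon$ when $\rho_n(\varepsilon)\le1$, non-admissibility of every $\lambda\le e^{-q_0}$ when $\rho_n(\lambda)=\infty$) is exactly the care needed and is handled correctly.
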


\begin{proof}
Suppose that $e^{p(\cdot)} \in L^q(E)$ for every $q \in [1, \infty)$. Then,
$p(\cdot) < \infty$ a.e.\@ in~$E$, which implies that $\mu(E_n) \to 0$ as
$n\to \infty$, since $\mu(E)<\infty$. 
Given an arbitrary $\lambda \in (0, 1/e)$, let 
$q_\lambda =|{\log \lambda}| \in (1, \infty)$. 
By the Cauchy--Schwarz inequality,
\begin{equation*} 
\int_{E} \biggl(\frac{\chi_{E_n}}{\lambda}\biggr)^{p(x)}\,d\mu(x) 
	= \int_E \bigl(e^{p(x)}\bigr)^{q_\lambda} 
\chi_{E_n} \,d\mu(x) 
\le \|e^{p(\cdot)}\|_{L^{2 q_\lambda}(E)}^{q_\lambda} \mu(E_n)^{1/2}.
\end{equation*} 
In particular, if $\mu(E_n) \le 1/\|e^{p(\cdot)}\|_{L^{2 q_\lambda}(E)}^{2q_\lambda}$,
then this implies that $\| \chi_{E_n} \|_{L^{p(\cdot)}(E)} \le \lambda$. 
Hence,
\[
   0 \le \liminf_{n\to\infty} \| \chi_{E_n} \|_{L^{p(\cdot)}(E)} 
   \le \limsup_{n\to\infty} \| \chi_{E_n} \|_{L^{p(\cdot)}(E)} \le \lambda.
\]
Passing to the limit $\lambda \to 0$ proves that $\| \chi_{E_n} \|_{L^{p(\cdot)}(E)}
\to 0$ as $n\to\infty$.

For the converse, we need to distinguish two cases based on the measure of
the set
$E_\infty \coloneq \{x\in E: p(x) = \infty \}$. If $\mu(E_\infty) > 0$, then
\[
  \|\chi_{E_n}\|_{L^{p(\cdot)}(E)} \ge 
\|\chi_{E_\infty}\|_{L^{p(\cdot)}(E_\infty)}
        = \|\chi_{E_\infty}\|_{L^{\infty}(E_\infty)} 
= 1.
\]
Suppose now that $\mu(E_\infty)=0$ and hence $\mu(E_n)\to 0$ as $n\to\infty$,
since $\mu(E)<\infty$.
Assume that $q_0 \in (1, \infty)$ is such that $e^{p(\cdot)} \notin L^{q_0}(E)$.
Then for all $n=1,2,\dots$\,,
\begin{equation*}
\int_{E} \biggl(\frac{\chi_{E_n}}{e^{-q_0}}\biggr)^{p(x)}\,d\mu(x) 
\ge \int_{E} e^{q_0 p(x)} \,d\mu(x) - e^{q_0 n} \mu(E\setm E_n)
= \infty.
\end{equation*}
Therefore, $\|\chi_{E_n}\|_{L^{p(\cdot)}(E)} \ge e^{-q_0}$ and 
passing to the limit yields immediately that
\begin{equation*}  
  \lim_{n\to\infty} \|\chi_{E_n}\|_{L^{p(\cdot)}(E)} \ge e^{-q_0} > 0. \qedhere
\end{equation*}
\end{proof}

\begin{example}
\label{exa:nonqcont-N1p(x)}
\textup{(Damped topologist's sine curve and $N^{1,p(\cdot)}$)}
Let $\P$ and $\mu$ be as in Example~\ref{ex-top-sine}. 
Let $p: \P \to [1, \infty]$
be a measurable variable exponent function that is finite a.e. 
Suppose that there is
$q_0 \in (1, \infty)$ such that 
$\| e^{p(\cdot)} \|_{L^{q_0}(\P)} = \infty$, but 
\[
\| e^{p(\cdot)} \|_{L^q(\P \setminus G)} < \infty
\quad \text{for every } q \ge q_0
\text{ and every open neighbourhood $G$ of $0$}.
\]

For instance, one may consider $p(x)=f(t)$, with $x=(t,\sin(1/t))$
and  $1 \le p((0,0)) \le \infty$, 
where $f\:(0,1]\to[1,\infty)$ is locally bounded and
there is $t_0>0$ such that
\[
f(t)> \frac{|{\log t}|}{q_0} 
\quad \text{for all $t \le t_0$}.
\]
Indeed,
for $q\ge q_0$,
\begin{equation*}
\int_{\P} e^{p(x)q} \,d\mu(x) 
   = \int_0^\infty \LL^1(\{t : f(t)  > q^{-1} \log z\}) \,dz 
\ge \int_{1/t_0}^\infty \frac{dz}{z} =\infty.
\end{equation*}
Moreover, for any open neighbourhood $G$ of $0$,
we have
\[
\int_{\P\setm G} e^{p(x)q} \,d\mu(x) \le e^{p_G q} <\infty,
\quad \text{where } p_G:=\sup_{\P \setm G} p<\infty.
\]

Since there is no (nonconstant) rectifiable curve in $\P$ going through $0$,
the function $g(x) = 0$ is an upper gradient of $u = \chi_{\{0\}}$. Therefore,
$u \in N^{1, p(\cdot)}(\P)$ with $\|u\|_{N^{1,p(\cdot)}(\P)} = 0$. It also
follows that $C_{p(\cdot)}(\{0\}) = 0$.

Let $G \subset \P$ be an open set containing 
$0$ and let $v \in N^{1, p(\cdot)}(\P)$
satisfy $v \ge 1$ on $G$. Then,
\begin{equation} \label{eq-Np(x)}
  \| v \|_{N^{1,p(\cdot)}(\P)} \ge \| v \|_{L^{p(\cdot)}(\P)}
	\ge \| \chi_G \|_{L^{p(\cdot)}(\P)} \ge e^{-q_0},
\end{equation}
by the assumption $\| e^{p(\cdot)} \|_{L^{q_0}(\P)} = \infty$
and Proposition~\ref{pro:Lp(x)_ACnorm}. 
Thus, $C_{p(\cdot)}(G) \ge e^{-q_0}$.
In particular,
\[
  C_{p(\cdot)}(\{0\}) = 0 < e^{-q_0} \le \inf_{\substack{ G \supset \{0\} \\
		G \text{ open}}} C_{p(\cdot)}(G),
\]
which implies that $C_{p(\cdot)}$ is not an outer capacity, not even for
zero-capacity sets, i.e.\ \ref{b-outer-zero} fails.
It also follows from \eqref{eq-Np(x)} that the norm $\|\cdot\|_{L^{p(\cdot)}}$
is neither absolutely continuous (AC) nor satisfies 
the Vitali--Carath\'eodory property (VC).

Arguing similarly as in Example~\ref{ex-int-not-Loo}(a)
shows that $u|_{\P \setm \{0\}}$ is continuous
for every $u \in N^{1,p(\cdot)}(\P)$.
Hence all functions in $N^{1,p(\cdot)}(\P)$ are weakly quasicontinuous,
i.e.\ \ref{b-wqcont} holds.
On the other hand, the function $h \in N^{1,p(\cdot)}(\P)$ from
Example~\ref{ex-top-sine}
has no quasicontinuous representative here either, i.e.\ \ref{b-repr} fails.
\end{example}

\section{The Sierpi\'nski carpet}
\label{sect-Sierpinski}

The damped topologist's sine curve and some of the disconnected 
examples were based on the fact that there are no nonconstant rectifiable 
curves passing through the origin.
The following example is well connected, even 2-quasiconvex, and yet
\ref{b-qcont}--\ref{b-wqcont}
in Theorem~\ref{thm-qcont-char} fail.
In particular, there is a function in $\Ninfty(\P)$ which is not even weakly
quasicontinuous.

\begin{example} \label{ex-Sierpinski-new}
Let $\P\subset [0,1]^2$ be the Sierpi\'nski carpet 
$S_{1/3}$,
equipped with the
$\tfrac{\log8}{\log3}$-dimensional 
Hausdorff measure $\mu$,
normalized so that $\mu(S_{1/3})=1$.
Recall that the Sierpi\'nski carpet 
$S_{1/3}$ 
is a selfsimilar fractal constructed by 
dividing the square $[0,1]^2$ into nine squares with side length $\tfrac13$ and 
removing the open middle square, 
and then repeating this process in each of the remaining eight squares,
in a similar way as when constructing the 
Cantor ternary set. 

We are going to show that, for every quasi-Banach function lattice $X$,
there are $X$-almost no nonconstant rectifiable curves in $\P$,
and therefore  $\NX(\P)=X(\P)$.
This is well known for $X=L^p$, but may not have been
observed for general quasi-Banach function lattices.

Let $\pi_1(x_1,x_2)=x_1$ be the orthogonal projection of $\P$
 onto the interval $[0,1]$.  
It follows from the proof of Proposition~4.5 in Bourdon--Pajot~\cite{BouPaj}
(see also Corollary~4.15 in 
Durand-Cartagena--Jaramillo--Shan\-mu\-ga\-lin\-gam~\cite{DurJarShan})
that the Lebesgue measure $\LL^1$ on $[0,1]$ is 
singular with respect to the pushforward $\mus$ of $\mu$,
defined by
\[
\mus(A) = \mu(\pi_1^{-1}(A)) \quad \text{for } A\subset [0,1].
\]

Indeed,  by the Radon--Nikodym theorem,
\[
\mus = h\LL^1 +\sigma, \quad \text{where }
 h\in L^1(0,1) 
\text{ and } 
\sigma \perp \LL^1.
\]
Now, if $(a_1a_2\dots)_3$ with $a_j\in\{0,1,2\}$ is a ternary expansion
of $x\in[0,1]$ and $N_n(x)$ is the number of $1$'s among $a_1,\dots, a_n$,
$n=1,2,\dots$\,, then it is not difficult to verify using the construction
of $S_{1/3}$ 
that for a.e.\ $x\in [0,1]$,
\[
h(x) = \lim_{n\to\infty}   \frac{\displaystyle \bigl( \tfrac14 \bigr)^{N_n(x)}
\bigl( \tfrac38 \bigr)^{n-N_n(x)}} {\displaystyle \bigl( \tfrac13 \bigr)^{n}}
= \lim_{n\to\infty}   \Bigl( \frac23 \Bigr)^{N_n(x)}
\Bigl( \frac98 \Bigr)^{n}.
\]
Thus, for a.e.\ $x\in [0,1]$,
\[
h(x) = 0
    \quad   \text{if } \displaystyle \limsup_{n\to\infty} \frac{N_n(x)}{n} > 
\theta_0 := \frac{\log \tfrac98}{\log \tfrac32} =0.29\dots < \frac13.
\]
Now,   $N_n(x)$ can be written as the sum
\[
\sum_{j=1}^n {\bf 1}_{\{a_j=1\}}
\]
of independent identically distributed random variables on the probability space
$[0,1]$ with the Lebesgue measure and the Borel $\sigma$-algebra.
Since the expectations
\[
{\bf E} ({\bf 1}_{\{a_j=1\}}) = \tfrac13,
\]
the strong law of large numbers immediately gives that
\[
\LL^1 \biggl( \biggl\{ x\in[0,1]: \lim_{n\to\infty} \frac{N_n(x)}{n} = \frac13 \biggr\} \biggr) =1.
\]
It therefore follows that $h=0$ a.e.\ in $[0,1]$.
Hence, $\mus = \sigma$ is
singular with respect to the Lebesgue measure $\LL^1$.
Moreover, $\supp \mus = [0,1]$ and $\mus$ is nonatomic.
Thus, for each (nondegenerate) interval $[a,b] \subset [0,1]$,
there is a Borel set $N_{a,b}\subset [a,b]$
such that
$\LL^1(N_{a,b})>0$ and $\mus(N_{a,b})=0$.

Next, let $\Ga$ be the collection
of all nonconstant  rectifiable curves $\ga:[0,l_\ga] \to \P$,
and  let
\[
\Gav= \{\ga \in \Ga : \pi_1(\ga) \text{ is a singleton}\}
\]
be the collection of the ``vertical'' curves.
Let $\pi_2(x_1,x_2)=x_2$ be the orthogonal projection on the $y$-axis,
\[
    N=\bigcup_{\substack{a,b\in \Q \\ 0 \le a <b\le 1}} N_{a,b}
\quad \text{and} \quad
E=\pi_1^{-1}(N) \cup \pi_2^{-1}(N).
\]
Then 
$\rho:=\infty \chi_E=0$ $\mu$-a.e.\ and thus
$\rho\in X(\P,\mu)$ for every quasi-Banach function lattice $X$, by (P1).
Moreover, 
\[
    \int_\ga \rho \, ds \ge \int_{\pi_1(\ga)} \infty \chi_N(t) \, dt = \infty
   \quad \text{if }\ga \in \Ga \setm \Gav,
\]
while for $\ga \in \Gav$   
the same estimate holds with $\pi_2(\ga)$ 
instead of $\pi_1(\ga)$.

Hence there are $X$-almost no nonconstant rectifiable curves in $\P$.
Therefore 
\[
\NX(\P)=X(\P)
\quad \text{and} \quad 
\|\cdot\|_{\NX(\P)} = \|\cdot\|_{X(\P)}.
\]
In particular, for $X=L^\infty$, it follows from 
the last part of
Theorem~\ref{thm-von-Koch} 
that \ref{b-outer-zero} fails.

Moreover, since $\mu(\P\setm E)=\mu(\P)>0$ 
and $\mu$ is Borel regular, there is a compact
set $K \subset \P \setm E$ with $\mu(K)>0$, see 
e.g.\ Rudin~\cite[Theorem~2.18]{RudinRC}.
As $E$ is dense in $\P$, the compact set $K$ has empty interior and hence
$v=\chi_K$ is not weakly quasicontinuous, by Theorem~\ref{thm-von-Koch}\ref{q2}.
\end{example}

\end{document}